\begin{document}

	\newtheorem{theorem}{Theorem}[section]
	\newtheorem{problem}[theorem]{Problem}
	\newtheorem{corollary}[theorem]{Corollary}
	\newtheorem{observation}[theorem]{Observation}
	\newtheorem{definition}[theorem]{Definition}
	\newtheorem{conjecture}[theorem]{Conjecture}
	\newtheorem{question}[theorem]{Question}
	\newtheorem{lemma}[theorem]{Lemma}
	\newtheorem{proposition}[theorem]{Proposition}
	\newtheorem{example}[theorem]{Example}
	\newenvironment{proof}{\noindent {\bf
			Proof.}}{\hfill $\square$\par\medskip}
	\newcommand{\remark}{\medskip\par\noindent {\bf Remark.~~}}
	\newcommand{\pp}{{\it p.}}
	\newcommand{\de}{\em}

\newcommand{\1}{{\uppercase\expandafter{\romannumeral1}}}
\newcommand{\2}{{\uppercase\expandafter{\romannumeral2}}}
\newcommand{\3}{{\uppercase\expandafter{\romannumeral3}}}
\newcommand{\4}{{\uppercase\expandafter{\romannumeral4}}}

\title{A stability theorem for multi-partite graphs\thanks{This work is supported by National Natural Science
Foundation of China (No. 11871222, 11901554) and Science and Technology Commission
of Shanghai Municipality (No. 18dz2271000). E-mail addresses: 52215500039@stu.ecnu.edu.cn (W. Chen), chlu@math.ecnu.edu.cn (C. Lu), ltyuan@math.ecnu.edu.cn.}}

\author{
Wanfang Chen, Changhong Lu, Long-Tu Yuan\\
	School of Mathematical Sciences\\
	Shanghai Key Laboratory of PMMP\\
	East China Normal University\\
	Shanghai 200241, China\\
}
\date{}
\maketitle

\begin{abstract}
The Erd\H{o}s-Simonovits stability theorem is one of the most widely used theorems in extremal graph theory.
We obtain an Erd\H{o}s-Simonovits type stability theorem in multi-partite graphs.
Different from the Erd\H{o}s-Simonovits stability theorem, our stability theorem in multi-partite graphs says that if the number of edges of an $H$-free graph $G$ is close to the extremal graphs for $H$, then $G$ has a well-defined structure but may be far away to the extremal graphs for $H$.
As applications, we strengthen a theorem of  Bollob\'{a}s, Erd\H{o}s and Straus    and solve a conjecture in a stronger form posed by Han and Zhao concerning the maximum number of edges in multi-partite graphs which does not contain vertex-disjoint copies of a clique.
\end{abstract}

\section{Introduction}
\noindent Given graphs $G$ and $F$, if $G$ does not contain a copy of $F$, then we say that $G$ is $F$-free.
We call an $n$-vertex $F$-free graph with   maximum number of edges an extremal graph for $F$.
Tur\'{a}n \cite{turan1941} showed that the $n$-vertex complete $(t-1)$-partite graph with part sizes as equal as possible, denote by $T(n,t-1)$, is the unique extremal graph for $K_{t}$, where $K_{t}$ is the complete graph on $t$ vertices.
Since then, determining the extremal graphs for a given graph with some additional conditions became one of the most important topics in combinatorics.

We will consider the following extremal problem.
Let $[r]=\{1,2,\ldots,r\}$.
In the rest of this paper, we will mostly consider $n$-vertex $r$-partite graphs with a partition $\mathcal{V}=(V_1,\ldots,V_r)$ such that $|V_i|=n_i$ for $i\in[r]$ and $n_1\geq \ldots\geq n_r$, that is, we consider spanning subgraphs of $K_{n_1,\ldots,n_r}$, where $K_{n_1,\ldots,n_r}$  is the complete  $r$-partite graph containing all edges between different $V_i$'s.
Denote by ex$(n_1,\ldots,n_r,F)$ the maximum number of edges in an $F$-free $r$-partite graph with parts of sizes $n_1,\ldots,n_r$.

Researches on extremal problems in multi-partite graphs can be traced back to the 1950s.
In 1954, Zarankiewicz \cite{Zarakiewicz1954} studied ex$(n,n,K_{2,2})$.
Later, K\"{o}v\'{a}ri, S\'{o}s and Tur\'{a}n \cite{Kovari1954} gave an upper bound of ex$(n,n,K_{s,t})$.
The above two results are strongly connected to the Tur\'{a}n numbers of bipartite graphs.
Very recently, based a quantitative variant of the random algebraic method, Conlon \cite{Conlon} gave good  lower bounds for ex$(n,m,K_{s,t})$ when $n,m$ satisfy some additional conditions.
For related topics of the Zarankiewicz problems, we refer the interested readers to \cite{Conlon} and references therein.

The following problem is related to the Tur\'{a}n numbers of non-bipartite graphs.
For a set of integers $I$, let $n_I:=\sum_{i\in I} n_i$.
Given $r \geq t \geq 3$ and $k \geq 2$, let $n_1\geq \ldots \geq n_r$.
For $I\subseteq [r]$, write $m_I = \min_{i\in I} \{n_i\}$.
Given a partition $\mathcal{P}$ of $[r]$, let $n_{\mathcal{P}}= \max_{I\in \mathcal{P}}\{n_I-m_I\}.$
Define
$$f(n_1,\ldots,n_r,k,t):=\max\limits_{\mathcal{P}} \left\{(k-1)n_\mathcal{P} + \sum_{I\neq I'\in \mathcal{P}} n_I\cdot n_{I'} \right\},$$
where the maximum is taken over all partitions $\mathcal{P}$ of $[r]$ into $t-1$ parts.
In particular, define
$$f(n_1,\ldots,n_r,1,t):=\max\limits_{\mathcal{P}} \left\{  \sum_{I\neq I'\in \mathcal{P}} n_I\cdot n_{I'} \right\},$$
where the maximum is taken over all partitions $\mathcal{P}$ of $[r]$ into $t-1$ parts.

Bollob\'{a}s, Erd\H{o}s and Straus \cite{BES1974} consider the extremal graphs for $K_{t}$ in multi-partite graphs.

\begin{theorem}[Bollob\'{a}s, Erd\H{o}s and Straus \cite{BES1974}]\label{extremal number 1}
Let $r\geq t$.
Then
 $${\rm ex}(n_1,\ldots,n_r,K_{t})=f(n_1,\ldots,n_r,1,t).$$
\end{theorem}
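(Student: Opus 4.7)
The plan is to establish matching lower and upper bounds.

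\textbf{Lower bound.} Choose a partition $\mathcal{P}^{*}=\{I_{1},\ldots,I_{t-1}\}$ of $[r]$ that attains the maximum in the definition of $f(n_{1},\ldots,n_{r},1,t)$, set $U_{j}=\bigcup_{i\in I_{j}}V_{i}$ for $j\in[t-1]$, and take the spanning subgraph $G^{*}$ of $K_{n_{1},\ldots,n_{r}}$ whose edges are exactly those between $V_{i}$ and $V_{i'}$ with $i,i'$ lying in distinct parts of $\mathcal{P}^{*}$. Because $U_{1},\ldots,U_{t-1}$ is a proper $(t-1)$-coloring of $G^{*}$, the graph $G^{*}$ is $K_{t}$-free, and a direct count of its edges yields $e(G^{*})=f(n_{1},\ldots,n_{r},1,t)$.

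\textbf{Upper bound via Zykov symmetrization.} Let $G$ be an extremal $K_{t}$-free spanning subgraph of $K_{n_{1},\ldots,n_{r}}$. I would apply Zykov symmetrization in two stages. In Stage~1, any two vertices $u,v$ in the same part $V_{i}$ are automatically non-adjacent; if $d_{G}(u)\geq d_{G}(v)$, replacing $N_{G}(v)$ by $N_{G}(u)$ does not decrease $e(G)$ and preserves $K_{t}$-freeness, since a $K_{t}$ through $v$ in the new graph would, via the substitution $v\mapsto u$, produce a $K_{t}$ in $G$. Iterating over all same-part pairs, I may assume every two vertices of $V_{i}$ share a common neighborhood, so $G=H[n_{1},\ldots,n_{r}]$ is a blow-up of a $K_{t}$-free graph $H$ on vertex set $[r]$. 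In Stage~2, I apply the weighted Zykov move to $H$: whenever $ij\notin E(H)$ and $\sum_{k\in N_{H}(i)}n_{k}\geq\sum_{k\in N_{H}(j)}n_{k}$, set $N_{H}(j):=N_{H}(i)$; this preserves $K_{t}$-freeness (same argument in $H$) and does not decrease $e(G)=\sum_{ij\in E(H)}n_{i}n_{j}$. After iteration, for any non-adjacent $i,j$ in $H$ one has $N_{H}(i)=N_{H}(j)$, so non-adjacency on $[r]$ is an equivalence relation whose classes $I_{1},\ldots,I_{s}$ make $H$ complete $s$-partite, and $K_{t}$-freeness forces $s\leq t-1$. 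Hence $e(G)=\sum_{1\leq k<l\leq s}n_{I_{k}}n_{I_{l}}\leq f(n_{1},\ldots,n_{r},1,t)$.

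\textbf{Main obstacle.} The delicate point is guaranteeing that the Zykov process terminates with non-adjacency as a genuine equivalence relation, rather than merely with equal weighted degrees on non-adjacent pairs. I would handle this by selecting, among all $K_{t}$-free graphs $H$ on $[r]$ that maximize $\sum_{ij\in E(H)}n_{i}n_{j}$, one that lexicographically maximizes a secondary invariant such as the vector of neighborhood agreements $\bigl(|N_{H}(i)\cap N_{H}(j)|\bigr)_{ij\notin E(H)}$; any residual asymmetry on a non-adjacent pair would admit a Zykov move strictly improving this invariant while preserving the edge count. Since Stage~2 moves modify neighborhoods uniformly along entire parts $V_{j}$, they cannot reintroduce within-part asymmetries established in Stage~1, and the whole procedure converges to the claimed blow-up of a complete $(t-1)$-partite graph corresponding to a partition of $[r]$.
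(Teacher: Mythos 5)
Your overall strategy --- Zykov symmetrization to reduce an extremal $K_t$-free subgraph of $K_{n_1,\ldots,n_r}$ to a blow-up of a complete multipartite graph on $[r]$ with at most $t-1$ classes --- is the standard route to the Bollob\'as--Erd\H{o}s--Straus theorem. The paper cites this result rather than reproving it, but its proof of the companion Theorem~\ref{extremal (t-1)-partition} relies on exactly this family of moves $G_{A\to B}$, so you are aligned with the paper's toolkit. Your lower bound and both symmetrization stages of the upper bound are sound.

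The genuine gap is the one you flag as the ``main obstacle.'' The proposed secondary invariant $\bigl(|N_H(i)\cap N_H(j)|\bigr)_{ij\notin E(H)}$ is indexed by the non-edge set of $H$, and that index set itself changes under each Zykov move (after $N_H(j):=N_H(i)$, vertex $j$ inherits exactly $i$'s non-edges), so lexicographic maximization across moves is not well posed; moreover a move that raises one entry can lower others, and you have not shown the process terminates at a complete multipartite $H$. The standard repair avoids iteration entirely: take $G$ extremal, run Stage~1 so that $G=H[n_1,\ldots,n_r]$ for some $K_t$-free $H$ on $[r]$, and suppose non-adjacency in $H$ fails transitivity, say $ab,bc\notin E(H)$ but $ac\in E(H)$. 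With $w(i)=\sum_{k\in N_H(i)}n_k$, extremality already forces $w(b)\geq w(a)$ and $w(b)\geq w(c)$ (else a single symmetrization onto $a$ or $c$ strictly increases $e(G)$). Now replace both $N_H(a)$ and $N_H(c)$ by $N_H(b)$ simultaneously: the new blow-up remains $K_t$-free (the substitution argument from your Stage~1, using $ab,bc\notin E(H)$), and its edge count exceeds $e(G)$ by
$$n_a\bigl(w(b)-w(a)\bigr)+n_c\bigl(w(b)-w(c)\bigr)+n_an_c\;\geq\;n_an_c\;>\;0,$$
the final $n_an_c$ being the double-counting correction for the discarded edge $ac$ in $n_aw(a)+n_cw(c)$. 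This contradicts extremality, so non-adjacency is an equivalence relation, $H$ is complete $s$-partite with $s\leq t-1$, and $e(G)=\sum_{k<l}n_{I_k}n_{I_l}\leq f(n_1,\ldots,n_r,1,t)$, completing your proof. With this one replacement the argument is correct.
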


We will characterize the extremal $(t-1)$-partitions in Theorem~\ref{extremal number 1}.
First, we introduce some notations.

Given two partitions $\mathcal{P}=(P_1,\ldots ,P_{t-1})$ and $\mathcal{V}=(V_1,\ldots,V_r)$ of $V$.
For $i\in[t-1]$  and $j\in[r]$, we say that $V_i$ is \textcolor{blue}{{\it integral}} in $P_j$ or $P_j$ is \textcolor{blue}{{\it integral}} to $V_i$ if $V_i \subseteq P_j$ and $V_i$ is \textcolor{blue}{{\it partial}} in $P_j$ or $P_j$ is  \textcolor{blue}{{\it partial}} to $V_i$ if $V_i \cap P_j\neq \emptyset$ and $V_i  \nsubseteq P_j$.

Fix $P_j$, the  \textcolor{blue}{{\it  integral part}} of $P_j$ is the union of $V_i$'s which are integral in $P_j$ and the  \textcolor{blue}{{\it partial part}}  of $P_j$ is the union of $V_i\cap P_j$'s such that $V_i$ is partial in $P_j$.
We say $P_j$ is partial to $\mathcal{V}$ (simply $P_j$ is partial) if the partial part of $P_j$ is not empty and  is  integral  to $\mathcal{V}$ (simply $P_j$ is integral) otherwise.

We say  $\mathcal{P}$ is \textcolor{blue}{{\it 1-partial}} to $\mathcal{V}$ (simply $\mathcal{P}$ is 1-partial) if each $P_j$ contains at most one partial $V_i$.
The \textcolor{blue}{\it internalization} of  $\mathcal{P}$  is a partition, denote by  $I(\mathcal{P})$, obtained from $\mathcal{P}$ by putting all vertices of each partial $V_i$ into one $P_j$ containing some vertices of $V_i$.

Furthermore, we say $\mathcal{P}$ is  \textcolor{blue}{{\it stable}} to $\mathcal{V}$ if the followings hold,
\begin{itemize}
  \item  $\mathcal{P}$ is $1$-partial to $\mathcal{V}$,
  \item  the size of the integral part of any partial class of $\mathcal{P}$ equals to each other, and is no more than the size of any integral class of $\mathcal{P}$,
  \item  the size of the integral part of any class of $\mathcal{P}$ is no less than the size of any partial class of $\mathcal{V}$, 
  \item  after removing any  integral $V_i$, the size of the resulting set is no more than the size of the  integral part of any other class of $\mathcal{P}$.
\end{itemize}


Given a graph $H$ with partitions  $\mathcal{V}$ and $\mathcal{P}$, let $H[\mathcal{P}]$ denotes the induced $(t-1)$-partite subgraph of $H$, that is, the edge set of $H[\mathcal{P}]$ is
$$\big\{ xy:xy\in E(H), x\in P_j\cap V_{i},y\in P_{j'}\cap V_{i'},i\neq i'\in[r],j\neq j'\in[t-1]   \big\}.$$


We say a partition of $\mathcal{P}$ is \textcolor{blue}{{\it an extremal $(t-1)$-partition}} for  $\mathcal{V}$ if $e(K_{n_1,\ldots,n_r}[\mathcal{P}])=f(n_1,\ldots,n_r,1,t)$.
The following theorem characterizes the extremal structures in Theorem~\ref{extremal number 1}.

\begin{theorem}\label{extremal (t-1)-partition}
Let $\mathcal{V}=(V_1,\ldots,V_r)$ such that $|V_i|=n_i$ for $i\in[r]$ and $n_1\geq \ldots\geq n_r$.
The $(t-1)$-partition $\mathcal{P}$ is an  extremal partition for $\mathcal{V}$ if and only if
\begin{itemize}
  \item  $\mathcal{P}$ is stable to $\mathcal{V}$ and
  \item  $I(\mathcal{P})$ is still an  extremal $(t-1)$-partition for $\mathcal{V}$.
\end{itemize}
\end{theorem}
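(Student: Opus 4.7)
My approach centers on the edge-count identity $e(K_{n_1,\ldots,n_r}[\mathcal{P}])=\sum_{j<j'}|P_j||P_{j'}|-\sum_{i}\sum_{j<j'}a_{ij}a_{ij'}$, where $a_{ij}=|V_i\cap P_j|$, from which one computes that moving a single vertex $u\in V_i$ from $P_j$ to $P_{j'}$ changes $e$ by exactly $\Delta e=(|P_j|-a_{ij})-(|P_{j'}|-a_{ij'})$. Thus local optimality of $\mathcal{P}$ is controlled by the adjusted sizes $|P_j|-a_{ij}$: for each partial $V_i$ these must be equal for all $j\in S_i:=\{j:V_i\cap P_j\neq\emptyset\}$, and bounded above by $|P_{j'}|-a_{ij'}$ for $j'\notin S_i$; for each integral $V_i\subseteq P_j$ one has $|P_j|-n_i\le|P_{j'}|$ for all $j'\neq j$. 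Write $\alpha_j$ (resp.\ $\beta_j$) for the size of the integral (resp.\ partial) part of $P_j$.

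For the sufficient direction (stable plus $I(\mathcal{P})$ extremal implies $\mathcal{P}$ extremal), the key consequence of stability is that $|P_j|-a_{ij}$ collapses to a single common value $\alpha_j$ for every partial $V_i$ and every $j\in S_i$, since 1-partial forces $\beta_j=a_{ij}$ on $S_i$ and the second bullet of stability says the $\alpha_j$'s are equal across all partial classes. Hence every single move of a $V_i$-vertex between two of its own parts is $\Delta e$-neutral; since distinct partial $V_i$'s use disjoint sets of parts by 1-partial, the consolidations can be performed independently without affecting $e$, and we conclude $e(\mathcal{P})=e(I(\mathcal{P}))=f(n_1,\ldots,n_r,1,t)$.

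For the necessary direction, assume $\mathcal{P}$ is extremal. The single-move identities above already yield consolidation-invariance of $e$, so $e(I(\mathcal{P}))=e(\mathcal{P})=f$, giving the second bullet. The core of the work is proving 1-partial. Suppose $V_i$ and $V_{i'}$ are both partial in a common $P_j$. If they are also both partial in some other $P_{j'}$, then a direct swap of a $V_i$-vertex of $P_j$ with a $V_{i'}$-vertex of $P_{j'}$ yields, after cancellation using the two single-move identities, $\Delta e=+2$, a contradiction. Otherwise pick $P_{j_1}\in S_i\setminus\{j\}$ and $P_{j_2}\in S_{i'}\setminus\{j\}$ and consolidate $V_i$ into $P_{j_1}$; this is $\Delta e$-neutral, but a short computation shows that in the new configuration the adjusted size of $V_{i'}$ at $P_j$ is strictly less than at $P_{j_2}$ by exactly $a_{ij}\ge 1$, so a subsequent single $V_{i'}$-move from $P_{j_2}$ to $P_j$ strictly increases $e$, again a contradiction. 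Given 1-partial, equality of $\alpha_j$ within each group $S_i$ is the single-move equality; cross-group equality is obtained by consolidating two partial $V_i$ and $V_{i'}$ into chosen parts and applying local optimality for the (now integral) $V_i$ to a move into a residual part of $S_{i'}$, whose new size is exactly the common $\alpha$ of $S_{i'}$. The inequality part of the second stability bullet follows by moving a full $V_i$ into an integral class; the third bullet follows because in the consolidated partition any integral $V_m$ in $P_j$ must satisfy $n_m\ge n_i$, whence $\alpha_j\ge n_i$; the fourth bullet follows by first consolidating the partial of the target class $P_{j'}$ away from $P_{j'}$ and then applying single-move optimality for the integral $V_i\subseteq P_j$.

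The principal obstacle is that neither 1-partial nor cross-group $\alpha$-equality is visible from single-vertex local moves alone; both require the two-step device of first consolidating a partial class (with $\Delta e=0$) and then exhibiting an improving single move in the resulting configuration. Once this device is set up, the remaining stability conditions reduce to routine applications of the same pattern, so the work is mostly careful bookkeeping of how the adjusted sizes and integral parts transform under each combined move.
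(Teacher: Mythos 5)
Your reformulation via the edge-count identity $e(K[\mathcal{P}])=\sum_{j<j'}|P_j||P_{j'}|-\sum_i\sum_{j<j'}a_{ij}a_{ij'}$ and the consequent single-move formula $\Delta e=(|P_j|-a_{ij})-(|P_{j'}|-a_{ij'})$ is a clean repackaging of the paper's device $G_{A\to B}$, and most of your steps track the paper's quite closely: the within-group equalities from local optimality, the consolidation-into-one-part trick, the direct vertex swap for the case where two partial classes share two parts, and the two-step ``consolidate, then move'' argument for $1$-partiality. Those parts are correct.

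There are, however, two gaps. The lesser one is an ordering circularity: you assert ``the single-move identities already yield consolidation-invariance of $e$, so $e(I(\mathcal{P}))=e(\mathcal{P})=f$'' before you have established $1$-partiality. Consolidation-invariance genuinely needs $1$-partiality: if $V_{i_1}$ and $V_{i_2}$ were both partial in the same pair of parts $P_1,P_2$, the second consolidation would no longer be $\Delta e$-neutral because the first one alters $|P_1|,|P_2|$; indeed the signed edge change of the composite depends on the choice of target parts and is in general nonzero. You do prove $1$-partiality correctly afterward, so this is fixable by re-ordering, but as written the deduction is circular.

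The more serious gap is the third stability bullet, i.e.\ that the integral part $\alpha_j$ of every class is at least $n_i$ for every partial $V_i$. Your one-line justification --- ``in the consolidated partition any integral $V_m$ in $P_j$ must satisfy $n_m\ge n_i$, whence $\alpha_j\ge n_i$'' --- asserts the key inequality $n_m\ge n_i$ without proof, and it does \emph{not} follow from the single-vertex moves or consolidations you have set up: every single-vertex move of a $V_m$-vertex out of $P_j$, or of a $V_i$-vertex into $P_j$, has nonpositive or zero $\Delta e$ regardless of the comparison between $n_m$ and $n_i$, so local optimality alone is silent here. One really does need a move of a different type. One option, in the spirit of your framework, is a whole-class swap of $V_m\subseteq P_j$ with $V_i\subseteq P_{j_1}$ in $I(\mathcal{P})$, which gives $\Delta e=n_m(n_i-n_m)\le 0$, hence $n_m\ge n_i$. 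The paper instead isolates $V_i$ as a new $(t-1)$-st class and merges the two integral parts $P_j\setminus V_i$ and $P_{j'}\setminus V_i$, getting $\Delta e=m(n_i-m)\le 0$ directly. Either works, but you cannot wave it off as ``routine bookkeeping''; it is an essentially new operation not covered by your single-move plus consolidation toolkit, and as written your proof has a hole precisely at this point.
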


Let $K_t^s=T(st,t)$.
Erd\H{o}s and Stone \cite{erdHos1946} showed that the extremal graphs for $K_t^s$ have $e(T(n,t-1))+o(n^2)$ edges.
In 1968, Erd\H{o}s and Simonovits \cite{erdHos1967,erdHos1968,Simonovits1968} proved the following well-known stability theorem.

\begin{theorem}[Erd\H{o}s-Simonovits Weak Stability Theorem \cite{erdHos1967,erdHos1968,Simonovits1968}]\label{stability theorem}
Let $F$ be a graph with chromatic number $t\geq 3$.
For every $0<\epsilon<1$, there exists a constant $\delta >0$ such that every $F$-free graph $H$ with $n\geq1/\delta$ vertices and at least $e(T(n,t-1))-\delta n^2$ edges contains a $(t-1)$-partite subgraph with at least $e(T(n,t-1))-\epsilon n^2$ edges and can be obtained from an extremal $F$-free graph by changing at most $\epsilon n^2$ edges.
\end{theorem}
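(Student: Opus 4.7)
\medskip
\noindent\emph{Proof proposal.} The plan is to combine Szemer\'edi's regularity lemma with a direct stability statement for $K_t$. Since $\chi(F)=t$, the graph $F$ embeds into some blow-up $K_t^s$, hence every $F$-free graph is also $K_t^s$-free; because $\mathrm{ex}(n,K_t^s)$ and $e(T(n,t-1))$ agree up to $o(n^2)$ by Erd\H{o}s--Stone, it suffices to prove the stability statement in the stronger form $F=K_t^s$.

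First I would apply the regularity lemma to $H$ with parameter $\eta=\eta(\epsilon)\ll \epsilon$, obtaining an $\eta$-regular partition $V_0,V_1,\dots,V_k$ with $|V_0|\leq \eta n$ and $|V_i|=(1\pm\eta)n/k$ for $i\geq 1$. Let $R$ be the reduced graph on $[k]$, with $ij\in E(R)$ when $(V_i,V_j)$ is $\eta$-regular and has density at least $\eta$. A standard edge count yields $e(R)\geq e(T(k,t-1))-(\delta+C\eta)k^2$ for an absolute constant $C$, while the counting (key) lemma shows that any copy of $K_t$ in $R$ lifts to a copy of $K_t^s$ in $H$; hence $R$ must be $K_t$-free.

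The next step, and the main obstacle, is a stability result for $K_t$: every $K_t$-free graph on $k$ vertices with at least $e(T(k,t-1))-\delta' k^2$ edges lies within $\epsilon' k^2$ edges of $T(k,t-1)$, with $\epsilon'\to 0$ as $\delta'\to 0$. I would argue by iteratively removing vertices of degree below $(1-\tfrac{1}{t-1}-\sqrt{\delta'})k$; the total edge deficiency $\delta'k^2$ bounds the number of such removals by $O(\sqrt{\delta'})k$. The surviving subgraph has minimum degree above the Andr\'asfai--Erd\H{o}s--S\'os threshold $\tfrac{3t-7}{3t-4}|V|$ for $K_t$-free graphs and is therefore $(t-1)$-colorable; comparing this $(t-1)$-partite graph with $T(k,t-1)$ and adding back the removed vertices produces the claimed closeness. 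The delicate part is tracking how the iterative removal interacts with the chromatic threshold so that the surviving graph is genuinely above it.

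Finally I would lift the $(t-1)$-partition $U_1,\dots,U_{t-1}$ of $V(R)$ to the partition $W_j:=\bigcup_{i\in U_j}V_i$ of $V(H)\setminus V_0$ and place the vertices of $V_0$ greedily into the $W_j$'s to maximize degree into the remaining classes. The edges of $H$ inside some $W_j$ come from three sources: regular pairs $(V_i,V_{i'})$ with $i,i'\in U_j$ (few by the stability of $R$), pairs of density below $\eta$ (few by construction), and edges touching $V_0$ (at most $\eta n\cdot n$). Choosing $\eta$ and $\delta'$ small enough keeps the sum below $\epsilon n^2$, which simultaneously yields a $(t-1)$-partite subgraph of $H$ with at least $e(T(n,t-1))-\epsilon n^2$ edges and bounds the edit distance from the complete $(t-1)$-partite graph on parts of size $|W_1|,\dots,|W_{t-1}|$ by $\epsilon n^2$; after balancing the part sizes, this host is within $o(n^2)$ edges of an extremal $F$-free graph, establishing the statement with $\delta$ controlled through the chain $\delta\ll \eta\ll \epsilon'\ll \epsilon$.
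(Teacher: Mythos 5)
The paper does not prove this theorem: it is stated as a classical result and cited to Erd\H{o}s and Simonovits, then used as a black box (most closely via Lemma~\ref{removal lemma} and the paper's own multi-partite analogue, Theorem~\ref{weak stability}). So there is no in-paper proof to compare against; I can only assess your proposal on its own merits and against how the paper handles its \emph{analogous} multi-partite result.

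Your route (regularity lemma $\to$ reduced graph $R$ is $K_t$-free and nearly extremal $\to$ deterministic stability for $K_t$ via low-degree vertex deletion and Andr\'asfai--Erd\H{o}s--S\'os $\to$ lift back to $H$) is a legitimate modern proof of Theorem~\ref{stability theorem}, and it is quite different both from the original Erd\H{o}s--Simonovits arguments (which predate the regularity lemma and work directly by symmetrization/progressive-induction) and from what the paper does in its own setting: the proof of Theorem~\ref{weak stability n case} avoids regularity entirely and instead uses the symmetrization operation $G\mapsto G_{V_i}$ together with the nested common-neighbourhood chain $H_0\supseteq H_1\supseteq\cdots$ to extract the $(t-1)$-partition. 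Your argument buys conceptual cleanliness and generality (it immediately upgrades to $K_t^s$, hence to all $F$ of chromatic number $t$), at the cost of the regularity-lemma constants; the paper's symmetrization-style method is more hands-on and gives effective control of constants, which the authors need for the quantitative multi-partite bookkeeping.

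Two places in your sketch deserve to be pinned down. First, the Andr\'asfai--Erd\H{o}s--S\'os step does go through, but only because the Tur\'an degree density $\tfrac{t-2}{t-1}$ exceeds the chromatic threshold $\tfrac{3t-7}{3t-4}$ by exactly $\tfrac{1}{(t-1)(3t-4)}$; you need $\sqrt{\delta'}$ smaller than this fixed gap \emph{and} you need the vertex count lost in the cleaning step, $O(\sqrt{\delta'}k)$, to be absorbed so that the surviving graph's minimum degree still exceeds $\tfrac{3t-7}{3t-4}$ times its \emph{own} order; both hold for $\delta'$ small, but the claim should be made explicit rather than deferred. Second, the final conclusion that $H$ is within $\epsilon n^2$ edits of an \emph{extremal} $F$-free graph requires the additional observation that the parts $U_1,\ldots,U_{t-1}$ of $R$ (and hence the $W_j$'s) are nearly balanced --- which does follow from the edit-distance bound to $T(k,t-1)$ --- and that an extremal $F$-free graph is itself $o(n^2)$-close to a balanced complete $(t-1)$-partite graph (apply your own stability conclusion with $\delta=0$ to the extremal graph, then use the triangle inequality); state this explicitly to avoid the appearance of circularity.
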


Erd\H{o}s and Simonovits essentially   got a more detailed version of Theorem~\ref{stability theorem} when $H$ is an extremal graph for $F$.
\begin{theorem}[Erd\H{o}s-Simonovits Strong Stability Theorem \cite{erdHos1967,erdHos1968,Simonovits1968}]\label{s stability theorem}
Let $F$ be a graph with chromatic number $t\geq 3$.
For every $0<\epsilon<1$ there exists a constant $\delta >0$ such that every extremal $F$-free graph $H$ with $n\geq1/\delta$ vertices can be partitioned into $t-1$ classes each containing $n/(t-1)+o(n)$ vertices, and, with the exception of at most $c_\epsilon$ vertices, each vertex of $H$ is joined to at most $\epsilon n$ vertices in its own class and to all but $\epsilon n$ vertices in the other classes.
\end{theorem}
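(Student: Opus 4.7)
The plan is to derive Theorem~\ref{s stability theorem} from the weak stability Theorem~\ref{stability theorem} combined with the extremality of $H$ and the Erd\H{o}s--Stone bound $\mathrm{ex}(n,F)=e(T(n,t-1))+o(n^2)$. Fix $\epsilon_1=\epsilon_1(\epsilon,n)\ll\epsilon$ tending to $0$ slowly as $n\to\infty$. Applying Theorem~\ref{stability theorem} with parameter $\epsilon_1$ produces a $(t-1)$-partite spanning subgraph $H_0\subseteq H$ with $e(H_0)\ge e(T(n,t-1))-\epsilon_1 n^2$; since $T(n,t-1)$ is $F$-free, extremality gives $e(H)\ge e(T(n,t-1))$, and Erd\H{o}s--Stone gives $e(H)\le e(T(n,t-1))+\epsilon_1 n^2$ for $n$ large, so the edges of $H$ lying inside a class of $H_0$ number at most $2\epsilon_1 n^2$. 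I would then replace the partition of $H_0$ by a $(t-1)$-partition $\mathcal{P}=(A_1,\dots,A_{t-1})$ of $V(H)$ maximizing the number of crossing edges; this only improves the crossing count and yields the standard max-cut property $d_{A_i}(v)\le d_{A_j}(v)$ for every $v\in A_i$ and $j\ne i$.

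Next, a concavity calculation gives $\sum_{i<j}|A_i||A_j|\le e(T(n,t-1))-\kappa_t\sum_i(|A_i|-n/(t-1))^2$ for some $\kappa_t>0$. Combined with the crossing-edge lower bound this forces $|A_i|=n/(t-1)+O(\sqrt{\epsilon_1}\,n)=n/(t-1)+o(n)$. Since $\sum_v d_{A_{i(v)}}(v)\le 4\epsilon_1 n^2$, at most $O(\sqrt{\epsilon_1}\,n)$ vertices have more than $\sqrt{\epsilon_1}\,n$ neighbours in their own class; dually, the crossing-edge deficit $\le\epsilon_1 n^2$ implies that at most $O(\sqrt{\epsilon_1}\,n)$ vertices miss more than $\sqrt{\epsilon_1}\,n$ neighbours in some other class. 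This already delivers both conclusions of the theorem, but only with an exceptional set of size $o(n)$.

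The remaining task, and the crux of the strong version, is to replace this $o(n)$ bound by a constant $c_\epsilon$ depending only on $\epsilon$ and $|V(F)|$. Call $v\in A_i$ \emph{bad} if it violates either conclusion at threshold $\epsilon n$, so each bad $v$ has edge deficit at least $\epsilon n/2$ compared to a typical good $u\in A_i$. I would then apply a Zykov-type replacement: delete $v$ and add a new vertex $v'$ with $N_{H'}(v')=N_H(u)\setminus\{v\}$, giving $e(H')\ge e(H)+\epsilon n/2-O(1)$. To see $H'$ is still $F$-free, any embedding $\phi\colon F\hookrightarrow H'$ must use $v'$; if $\phi(x)=v'$ and $u\notin\phi(V(F))$, then redefining $\phi(x):=u$ yields $F\hookrightarrow H$, contradicting $F$-freeness. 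Iterating the replacement, we gain $\Omega(\epsilon n)$ edges each time, so once more than some $c_\epsilon=O(1/\epsilon)$ bad vertices remain, the cumulative edge gain contradicts the extremality of $H$.

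The main technical obstacle is the $F$-freeness check above: the naive swap of $v'$ for $u$ fails if $u$ itself already appears in some $F$-copy through $v'$, so $u$ must be chosen to dodge every such copy simultaneously. Since good vertices in $A_i$ number $(1-o(1))|A_i|$ while each potential $F$-copy forbids at most $|V(F)|-1$ candidates, an averaging (or probabilistic) selection within $A_i$ should produce a suitable $u$ whenever $n$ is large compared to $|V(F)|$. With this replacement step in hand, the iteration closes the argument and delivers the desired constant $c_\epsilon$.
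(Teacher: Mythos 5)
The paper does not actually prove Theorem~\ref{s stability theorem}; it is quoted as a classical result of Erd\H{o}s and Simonovits with citations to the original sources, and is then used as a black box. So there is no in-paper proof to compare against. Judging your sketch on its own terms, the first stage (weak stability, max-cut $(t-1)$-partition, concavity to balance the parts, averaging to show all but $o(n)$ vertices are good) is a sound and standard reduction. The second stage, which is where all the difficulty lives, has genuine gaps.

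The central gap is the sentence ``each bad $v$ has edge deficit at least $\epsilon n/2$ compared to a typical good $u\in A_i$.'' This is false. A vertex $v\in A_i$ is declared bad if it has more than $\epsilon n$ neighbours inside $A_i$ \emph{or} misses more than $\epsilon n$ vertices in some other class. A vertex of the first type can simultaneously be joined to almost everything outside $A_i$; such a $v$ has degree roughly $n-|A_i|+\epsilon n$, which \emph{exceeds} the degree $\approx n-|A_i|$ of a typical good vertex. Replacing $v$ by a clone of $u$ then \emph{loses} edges, so your Zykov step cannot be applied to it, and no contradiction with extremality arises. These high-degree vertices are precisely the $c_\epsilon$ exceptional vertices the theorem allows, and the real content of the strong stability theorem is showing that there are only $O_\epsilon(1)$ of them. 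That bound comes from an entirely different argument: if there were unboundedly many such vertices, one could build a copy of $F$ (typically via a Selection-Lemma/Zarankiewicz-style counting on their large common neighbourhoods in the other classes), contradicting $F$-freeness. Your sketch does not contain this step, yet it is the crux.

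A second gap is the $F$-freeness check for the replacement. You correctly identify that the problematic embeddings $\phi:F\hookrightarrow H'$ are those using both $u$ and its clone $v'$, and you propose to dodge them by averaging over $u$, claiming ``each potential $F$-copy forbids at most $|V(F)|-1$ candidates.'' But the number of embeddings of $F-x$ into $H-v$ is $\Theta(n^{|F|-1})$, so a naive union bound over them forbids far more than $n$ candidates and gives nothing. One can salvage this by exploiting $F$-freeness of $H$ more carefully (the common neighbourhood of $\psi(N_F(x))$ must be contained in the image of $\psi$, which is highly restrictive in a graph near $T(n,t-1)$), but that is a different and more delicate argument than the one you describe. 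Finally, a minor point: the iteration framing is off --- since $H$ is an extremal $F$-free graph, a \emph{single} valid replacement that strictly gains edges while preserving $F$-freeness already contradicts $e(H)=\mathrm{ex}(n,F)$; there is nothing cumulative about it. The constant $c_\epsilon$ does not emerge from summing gains over many replacements but from bounding the set of vertices to which no gainful replacement applies.
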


Theorems~\ref{stability theorem} and~\ref{s stability theorem} are powerful tools in extremal graph theory.
For example, applying Theorem~\ref{s stability theorem}, Erd\H{o}s and Simonovits~\cite{ES1971} determined the extremal graphs for $K_{2,2,2}$ and applying Theorem~\ref{stability theorem},  Mubayi \cite{mubayi2010}, Pikhurko and Yilma \cite{pikhurko2017} considered the supersaturation problems (the minimum number of copies of $F$  in an $n$-vertex graph $H$ on   ex$(n,F)+q$ edges) for some specific graphs.
We do not try to list more applications of Theorems~\ref{stability theorem} and~\ref{s stability theorem}.
We only mention that Theorems~\ref{stability theorem} and~\ref{s stability theorem} often help when we consider extremal problems (not only Tur\'{a}n type problems) for non-bipartite graphs.

For a set $X$ and a partition $\mathcal{P}$, we define:
$$\mathcal{P}_X:=(P_1\setminus X,\ldots, P_{t-1}\setminus X).$$

Let $A\bigtriangleup B$ stands for the symmetric difference of the sets $A$ and $B$.
Given two graphs $H$ and $F$ on the same vertex set, we say  $F$ is \textcolor{blue}{$\alpha$-$close$} to $H$ if every vertex  $v$ satisfies $|N_H(v) \bigtriangleup N_F(v)|\leq \alpha$.

For  a spanning subgraph  $G$ of $K_{n_1,\ldots,n_r}$, we say a partition $\mathcal{P}=(P_1,\ldots,P_{t-1})$ of $G$ is an \textcolor{blue}{$(X,\epsilon)$-$stable$} partition (see Figure 1) if there exists a vertex set $X$ and a small constant $0<\epsilon<1$  with $|X|\leq\epsilon n_{t-1}$  such that
\begin{itemize}
  \item   $G-X$ is $\epsilon n_{t-1}$-close to $K_{n_1,\ldots,n_r}[\mathcal{P}]-X$ and
  \item  $\mathcal{P}_X$ is stable to $\mathcal{V}_X$.
\end{itemize}

We will establish the following stability result in multi-partite graphs.

\begin{theorem}[Weak Multi-partite Stability Theorem]\label{weak stability}
Let $F$ be a graph with chromatic number $t\geq 3$.
Let $G$ be an $F$-free $r$-partite graph with parts $\mathcal{V}=(V_1,\ldots,V_r)$.
For every $0<\epsilon<1$ there exists a constant $\delta >0$ such that if $n_{t-1}\geq1/\delta$ and
$$e(G)\geq  f(n_1,\ldots,n_r,1,t)-\delta n_{t-1}^2,$$
then, after removing $\epsilon n_{t-1}^2$ edges, $G$ has an $(X,\epsilon )$-stable $(t-1)$-partition $\mathcal{P}$ and the integral part of any class of $\mathcal{P}_X$ is larger than $(1-\epsilon)n_{t-1}$.
\end{theorem}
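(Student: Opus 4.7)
The plan is to adapt the Erd\H{o}s-Simonovits stability philosophy to the multi-partite setting. I would begin by choosing a $(t-1)$-partition $\mathcal{P}=(P_1,\ldots,P_{t-1})$ of $V(G)$ that maximises $e(G[\mathcal{P}])$, the number of $G$-edges between distinct classes. The standard switching argument then shows that every $v\in P_j$ satisfies $d_G(v,P_j)\le d_G(v,P_i)$ for all $i\ne j$, and hence $d_G(v,P_j)\le d_G(v)/(t-1)$.

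The principal obstacle is to show that the number of intra-class edges $\sum_j e(G[P_j])$ is bounded by $\epsilon_1 n_{t-1}^2$ for some $\epsilon_1\ll\epsilon$. I would argue by contradiction: if some $G[P_j]$ carries $\Omega(n_{t-1}^2)$ edges, then, after discarding the few vertices of abnormally low degree (respecting the multi-partite structure of $\mathcal{V}$), K\H{o}v\'ari-S\'os-Tur\'an yields a large bipartite subgraph inside $P_j$; the max property forces each of its vertices to have a positive proportion of its neighbours in every other $P_i$, so an iterated common-neighbourhood, Erd\H{o}s-Stone type supersaturation argument produces a blow-up of $K_t$, and hence $F$, inside $G$, contradicting $F$-freeness. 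Once this intra-class bound is established,
$$
e(K_{n_1,\ldots,n_r}[\mathcal{P}])\;\ge\;e(G[\mathcal{P}])\;\ge\;e(G)-\epsilon_1 n_{t-1}^2\;\ge\;f(n_1,\ldots,n_r,1,t)-(\delta+\epsilon_1)\,n_{t-1}^2,
$$
so $\mathcal{P}$ is an almost-extremal $(t-1)$-partition of $\mathcal{V}$. Note that for any $(t-1)$-partition of $V(G)$ the quantity $e(K_{n_1,\ldots,n_r}[\mathcal{P}])$ is linear in each coordinate $|P_j\cap V_i|$ with the row sums $n_i$ fixed, so the maximum of $e(K_{n_1,\ldots,n_r}[\mathcal{P}])$ over vertex-partitions is attained at integral partitions and equals $f(n_1,\ldots,n_r,1,t)$.

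Using the structural characterisation in Theorem~\ref{extremal (t-1)-partition}, I would then argue that for all but a small collection of indices $i$ the class $V_i$ lies almost entirely in a single $P_j$: any gross violation of the four bulleted conditions of stability would decrease $e(K_{n_1,\ldots,n_r}[\mathcal{P}])$ by more than $(\delta+\epsilon_1)n_{t-1}^2$ below the maximum. Collecting these misplaced vertices together with the vertices whose neighbourhoods in $G$ differ from their neighbourhoods in $K_{n_1,\ldots,n_r}[\mathcal{P}]$ by more than $\epsilon n_{t-1}$ into an exceptional set $X$, an averaging argument driven by $e(G)\ge f-\delta n_{t-1}^2$ yields $|X|\le\epsilon n_{t-1}$. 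The three defining conditions of an $(X,\epsilon)$-stable partition are then verified directly, and the bound $(1-\epsilon)n_{t-1}$ on each integral part of $\mathcal{P}_X$ follows from the third bullet in the definition of stability combined with $n_{t-1}\le n_i$ for $i\le t-1$.

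The secondary difficulty lies in the quantitative robustness of Theorem~\ref{extremal (t-1)-partition} needed in the last step: partitions whose edge count is within $O(n_{t-1}^2)$ of the maximum must already satisfy the stability conditions up to relocating $O(n_{t-1})$ vertices per class. Establishing this requires showing that each of the four ways a partition can fail to be stable (a $V_i$ becoming partial across too many classes, unequal integral parts of partial classes, an integral part too small, or an integral $V_i$ whose removal still leaves its class oversized) costs at least a linear-in-$n_{t-1}$ number of edges per misplaced vertex in $e(K_{n_1,\ldots,n_r}[\mathcal{P}])$, so that the gap $f-e(K_{n_1,\ldots,n_r}[\mathcal{P}])\le(\delta+\epsilon_1)n_{t-1}^2$ forces the number of misplaced vertices to be $O(\epsilon n_{t-1})$.
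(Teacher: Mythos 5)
Your proposal takes a genuinely different route from the paper. The paper does not use a max-cut partition. Instead it first reduces $F$-free to $K_t$-free by the Removal Lemma (handling wildly uneven part sizes separately through the parameter $\tau$), and then proves the $K_t$-free case via Zykov-style symmetrization: it forms $G^0=G$, $G^i=G^{i-1}_{V_i}$ (replacing each $V_i$'s neighbourhood by that of its maximum-degree vertex), analyses the fully symmetrized graph $G^r$ by iterated common-neighbourhood nesting $H_0\supset H_1\supset\cdots$, and then undoes the symmetrization one class at a time, re-establishing the stability of the partition at each step through a sequence of seven explicit switching claims. Your max-cut formulation, the observation that $e(K_{n_1,\ldots,n_r}[\mathcal{P}])$ is multilinear in the variables $|P_j\cap V_i|$ and hence maximized at integral assignments (so the vertex-partition maximum equals $f$), and the chain $e(K[\mathcal{P}])\ge e(G[\mathcal{P}])\ge e(G)-\epsilon_1 n_{t-1}^2$ are all sound and give a legitimate alternative framing; the supersaturation route in place of the Removal Lemma is also a reasonable substitution.

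However, there is a substantive gap. You label as a ``secondary difficulty'' the assertion that a $(t-1)$-partition within $O(n_{t-1}^2)$ edges of the extremum must, up to relocating $O(n_{t-1})$ vertices, satisfy all four stability conditions, with each kind of violation costing $\Omega(n_{t-1})$ edges per misplaced vertex. This is in fact the technical heart of the whole theorem: the paper's Claims 2--7 in the proof of the $K_t$-free case are devoted entirely to this, and they are not a routine quantitative sharpening of Theorem~\ref{extremal (t-1)-partition}. In particular, because stable partitions may legitimately be partial in several classes, there is a continuum of extremal vertex-partitions, so a near-extremal $\mathcal{P}$ need not sit near any single reference configuration; the paper handles this through carefully designed local switches that compare the sizes of specific integral and partial pieces, together with a cleanup step (Lemma~\ref{remove vertices to make it stable}). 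Your outline does not supply any of this machinery. A further under-specified step is the intra-class edge bound: ``discarding the few vertices of abnormally low degree (respecting the multi-partite structure of $\mathcal{V}$)'' and applying K\H{o}v\'ari--S\'os--Tur\'an plus Erd\H{o}s--Stone is plausible for roughly balanced part sizes, but when $n_1 \gg n_{t-1}$ the max-cut property $d_G(v,P_j)\le d_G(v,P_i)$ does not by itself give each vertex a positive proportion of $n_{t-1}$ neighbours in every other class, and you have not addressed how the supersaturation survives extreme imbalance; the paper's explicit treatment of the $\tau\ge 1$ regime (and the careful $\xi$/$\zeta$ cascade inside the $K_t$-free argument) exists precisely to dispose of this case. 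Until these two steps are carried out in detail, the proposal is an outline of a different proof strategy rather than a proof.
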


\begin{center}
\begin{tikzpicture}[scale = 1]
\tikzstyle{every node}=[scale=1]

\draw (1,2.7) ellipse  (0.5 and 0.2);
\draw node at (1,3.2) {$X$};

\draw (-1,0) ellipse  (0.5 and 2.2);
\draw node at (-2.8,0) {$V_2$};
\draw (-1,-3.6) ellipse  (0.5 and 1.2);
\draw node at (-2.8,-3.6) {$V_1$};

\draw (3,0) ellipse  (0.5 and 2.2);
\draw node at (4.8,0) {$V_3$};
\draw (3,-3.3) ellipse (0.5 and 0.9);
\draw node at (4.8,-3.3) {$V_1$};

\draw[style=dashed] (-4,-5) rectangle (6,2.4);
\draw node at (-4.5,-1) {$\mathcal{P}_X$};

\draw node at (1,-5.5) {Figure 1. A form of $(X,\epsilon)$-stable partition $\mathcal{P}$ with $V_1$ {\it partial} in it.};

\draw [line width=0.2cm, dotted,blue] (-0.5,0)--(2.5,0);
\draw [line width=0.2cm, dotted,blue] (-0.5,-3.6)--(2.5,0);
\draw [line width=0.2cm, dotted,blue] (-0.5,0)--(2.5,-3.3);

\end{tikzpicture}
\end{center}

\remark Different from Theorem~\ref{stability theorem}, our Theorem~\ref{weak stability} shows that there exist graphs $H$ with $\chi(H)=t$ such that if $e(G)\geq  f(n_1,\ldots,n_r,1,t)-\delta n_{t-1}^2,$  then $G$ may be far away from the extremal graph for $H$, see the following example.

\medskip

\noindent{\bf Example.} Let $K=K_{n_1,n_2,n_3}$ with classes $V_1$, $V_2$ and $V_3$.
Let $n_1=m$, $n_2=m-1$ and $n_3=m-1$.
From Theorem~\ref{extremal number 1} we have ex$(n_1,n_2,n_3,K_3)=2m(m-1)$.
Moreover, from Theorems~\ref{extremal (t-1)-partition} and~\ref{strong bollobas}, we can easily deduce that the unique extremal graph is $K_{m,2(m-1)}$.
On the other hand, if a $K_3$-free subgraph $H$ of $K$ has $2m(m-1)-o(m^2)$ edges, then $H$ may be a subgraph of $K[\mathcal{P}]$, where $\mathcal{P}=(V_1 \cup X,V_2 \cup Y)$ and $(X,Y)$ is a partition of $V_3$ with $|X|=\lfloor|V_3|/2\rfloor$ and $|Y|=\lceil|V_3|/2\rceil$.
Thus $H$ is far away from (changing $m^2/2$ edges) the unique extremal graph for $K_3$.

\medskip

Given $a,b \in (0,1)$, we use $a\mathop{\ll}b$ to denote that $a<b^N$ for some sufficient large constant $N$.
Comparing with Theorem~\ref{s stability theorem}, the following theorem is useful when  we consider the extremal graphs in multi-partite graphs.

\begin{theorem}[Strong Multi-partite Stability Theorem]\label{strong stability}
Let $F$ be a graph with chromatic number $t\geq 3$.
If $G \subseteq K_{n_1,\ldots,n_r}$ is an extremal graph for $F$, then for any $\epsilon>0$ there exist   constants $\delta_1 \ll \delta_2 \ll \delta_3 \ll \epsilon$ and $c_{\epsilon,F}$ depending on $r$, $\epsilon$ and $F$ such that if $n_{t-1}\geq 1 / \delta_1$ then
\begin{itemize}
\item there exists an $(X,\delta_2)$-stable $(t-1)$-partition $\mathcal{P}$,
\item  there exists a set $Y$ with $|Y|\leq c_{\epsilon,F}$ such that every vertex $v$ of $G-Y$ satisfies $|N_G(v)\bigtriangleup N_{K_{n_1,\ldots,n_r}[\mathcal{P}]}(v)|\leq \epsilon n_{t-1}$,
 \item   the difference of the degrees of vertices of $G-Y$ in same class of $\mathcal{V}$ is at most $\epsilon n_{t-1}$ and
\item  every vertex of $Y$ is adjacent to at least  $\delta_3 n_{t-1}$ vertices of each class of $\mathcal{P}$ such that they induced a complete $(t-1)$-partite graph.
\end{itemize}

\end{theorem}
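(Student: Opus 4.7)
The plan is to bootstrap Theorem \ref{weak stability} from an averaged pointwise description to the sharper pointwise description claimed here, using the extremality of $G$. Since $\chi(F)=t$, the complete $(t-1)$-partite graph $K_{n_1,\ldots,n_r}[\mathcal{P}_0]$ associated to an optimal partition $\mathcal{P}_0$ is $F$-free, so extremality together with Theorem \ref{extremal number 1} yields $e(G)\geq f(n_1,\ldots,n_r,1,t)$; a multi-partite Erd\H{o}s--Stone-type upper bound also gives $e(G) \leq f(n_1,\ldots,n_r,1,t) + o(n_{t-1}^2)$. Choose $\delta_1 \ll \delta_2 \ll \delta_3 \ll \epsilon$ and apply Theorem \ref{weak stability} with parameter $\delta_2$ (its tolerance being absorbed into $\delta_1$); this produces an $(X,\delta_2)$-stable $(t-1)$-partition $\mathcal{P}$ of $G$, which is exactly the first bullet. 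Since $|X| \leq \delta_2 n_{t-1}$ and $G - X$ is $\delta_2 n_{t-1}$-close to $K_{n_1,\ldots,n_r}[\mathcal{P}] - X$, every $v \notin X$ already satisfies $|N_G(v)\triangle N_{K[\mathcal{P}]}(v)| \leq 2\delta_2 n_{t-1} \leq \epsilon n_{t-1}$, so the exceptional set $Y$ we are trying to pin down lies inside $X$, with the trivial bound $|Y|\leq |X|\leq \delta_2 n_{t-1}$.

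The crucial refinement is to upgrade this to the absolute bound $|Y| \leq c_{\epsilon,F}$, which I intend to do through a Zykov-style symmetrization combined with supersaturation. In each $V_i$ fix a typical vertex $u_i \in V_i \setminus X$ (which exists since $|V_i \setminus X| \geq (1-\delta_2)n_{t-1}$). For $v \in X \cap V_i$ with $|N_G(v) \triangle N_{K[\mathcal{P}]}(v)| > \epsilon n_{t-1}$ one of two situations occurs: either (i) replacing $N_G(v)$ by $N_G(u_i)$ preserves $F$-freeness, in which case iterated replacements would strictly increase $e(G)$ unless $d_G(v) \geq d_G(u_i)$ already holds, contradicting extremality; or (ii) the replacement creates a copy of $F$, meaning $v$ together with a bounded set of typical vertices already forms a copy of $K_t^{s}$ for some $s = s(|V(F)|)$. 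If case (ii) occurs for more than a constant number of $v$'s, a multi-partite supersaturation argument (invoking the Erd\H{o}s--Stone bound $\mathrm{ex}(n_1,\ldots,n_r,K_t^{s'}) \leq f(n_1,\ldots,n_r,1,t) + o(n_{t-1}^2)$ for $s'$ slightly larger than $s$) produces an actual copy of $F$ in $G$, again a contradiction. Hence after iterating the symmetrization we are left with $|Y| \leq c_{\epsilon,F}$, depending only on $\epsilon$ and $|V(F)|$.

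For each surviving $v \in Y$, the same symmetrization argument forces $v$ to be adjacent to at least $\delta_3 n_{t-1}$ vertices in every class $P_j$ of $\mathcal{P}$: otherwise, deleting the few $v$--$P_j$ edges and cloning $u_i$ would strictly increase $e(G)$ while preserving $F$-freeness. Moreover, a typical vertex $w \in P_{j'} \setminus X$ is adjacent to all but at most $\epsilon n_{t-1}$ vertices of each other $P_{j''}$, so intersecting $v$'s $\delta_3 n_{t-1}$ neighbors in each $P_j$ with the typical vertices there produces the promised complete $(t-1)$-partite subgraph. Finally, the degree-uniformity claim on $G - Y$ follows from one more symmetrization: if typical $u, u' \in V_i \setminus Y$ satisfied $|d_G(u) - d_G(u')| > \epsilon n_{t-1}$, cloning the higher-degree vertex at the location of the lower-degree one would strictly boost $e(G)$ inside the $F$-free class, a contradiction.

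The main obstacle is the jump $|Y| \leq \delta_2 n_{t-1} \rightsquigarrow |Y| \leq c_{\epsilon,F}$, where a polynomial-in-$n$ estimate must be replaced by an absolute constant. In the classical non-partitioned Erd\H{o}s--Simonovits framework this is routine, but adapting it here requires two nontrivial ingredients: clones must be confined to their $V_i$-class so that the replacement respects the $r$-partite structure and the $F$-freeness argument handles the identification $v\mapsto u_i$ correctly (which uses that $v$ and $u_i$ are forced into the same color class of any $t$-coloring of $F$ compatible with $\mathcal{V}$), and a multi-partite Erd\H{o}s--Stone bound must be established or cited for the supersaturation step. Once these two ingredients are in place the rest of the proof follows the strong-stability scheme outlined above.
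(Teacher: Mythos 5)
Your overall plan is in the right spirit---use extremality plus Theorem~\ref{weak stability} to get the weak structure, then bootstrap the exceptional set down to a constant size---but the crucial step, bounding $|Y|$ by an absolute constant, is not carried out by the mechanism you describe, and I believe that mechanism cannot work as stated.

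First, your dichotomy into cases (i) and (ii) rests on the assumption that replacing $N_G(v)$ by $N_G(u_i)$ is a valid symmetrization move. Replacing the neighborhood of a \emph{single} vertex by that of another vertex in the same $V_i$ need not preserve $F$-freeness: after the switch $v$ and $u_i$ become non-adjacent twins, and a copy of $F$ may use both of them. The paper sidesteps this precisely by invoking Proposition~\ref{neighbour copy keeps free} with $B$ a set of size $|F|$ inside a typical class and replacing $N_G(v)$ with the \emph{common} neighborhood $N_G[B]$; the hypothesis $|B|\geq|F|$ is what forces the would-be copy of $F$ back into the original graph. So the correct version of your case~(i) gives the minimum-degree lower bound (equation~(\ref{minimum degree in F free extremal}) in the paper), but it does not by itself rule $v$ out of the exceptional set.

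Second, and more seriously, the supersaturation step does not produce a contradiction. You observe that many vertices $v$ each together with typical vertices form a copy of $K_t^{s}$ and then invoke ${\rm ex}(n_1,\ldots,n_r,K_t^{s'})\leq f(n_1,\ldots,n_r,1,t)+o(n_{t-1}^2)$; but $e(G)\geq f(n_1,\ldots,n_r,1,t)$ is compatible with that bound, so nothing is violated. What is actually needed is not a count of copies of $K_t^s$ but a guarantee that a \emph{constant} number of exceptional vertices share a large common neighborhood in each part $P_j$, so that those exceptional vertices, together with their shared neighborhood, can be assembled into a single large $K_t^{s'}$. That step is exactly what Bollob\'as's Selection Lemma (Lemma~\ref{selection lemma}) delivers: given more than $N(\cdot)$ vertices each with $\delta_3 n_{t-1}$ neighbors in every part, one can extract $|F|$ of them whose common intersection per part still has size $\Omega(\epsilon_1 n_{t-1})$, which together with those $|F|$ vertices forms a $K_t^{\Omega(\epsilon_1 n_{t-1})}\supset F$, contradicting $F$-freeness. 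Without some such intersection-compression tool, the jump from $|Y|\lesssim \delta_2 n_{t-1}$ to $|Y|\leq c_{\epsilon,F}$ is not achieved; this is exactly the ``main obstacle'' you flag, and it remains open in your argument. (A secondary issue: the paper's $Y$ is defined as vertices with $>\epsilon n_{t-1}$ neighbors \emph{inside their own part}, which differs from ``$|N_G(v)\bigtriangleup N_{K[\mathcal{P}]}(v)|>\epsilon n_{t-1}$''; the symmetric-difference bound then follows by combining low internal degree with the minimum-degree bound from extremality. Also, the partition $\mathcal{P}$ describes the edge-deleted graph $\widetilde G$, so your claim that the exceptional set lies entirely inside $X$ is not quite correct---a vertex outside $X$ may have had many internal edges removed.)
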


As an application of our stability theorems, we strengthen Theorem~\ref{extremal number 1} as following.

\begin{theorem}\label{strong bollobas}
Let $r\geq t$ and $n_{t-1}$ be sufficiently large.
All extremal graphs in Theorem~\ref{extremal number 1} are $(t-1)$-partite.
\end{theorem}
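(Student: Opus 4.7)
The plan is to apply Theorem~\ref{strong stability} with $F=K_t$ to an extremal $K_t$-free graph $G\subseteq K_{n_1,\dots,n_r}$, obtaining a $(t-1)$-partition $\mathcal{P}=(P_1,\dots,P_{t-1})$ together with exceptional sets $X$ and $Y$ satisfying the four bullets listed there. I would then prove that $G$ has no edge with both endpoints in the same part $P_j$, so that $\mathcal{P}$ witnesses that $G$ is $(t-1)$-partite.

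Suppose for contradiction that $uv\in E(G)$ with $u,v\in P_j$, and let $i_u,i_v\in[r]$ be the indices of the $\mathcal{V}$-classes containing $u$ and $v$ (distinct, since $G$ is $r$-partite). I will produce a $K_t$ through $uv$ by greedily choosing, for each $k\in[t-1]\setminus\{j\}$ in turn, a vertex $w_k\in P_k$ that is adjacent in $G$ to $u$, to $v$, and to every previously chosen $w_{k'}$, and that lies in a $\mathcal{V}$-class different from $V_{i_u},V_{i_v}$ and from the classes of the earlier $w_{k'}$. The hypothesis $r\ge t$ supplies enough $\mathcal{V}$-classes at every step, so $\{u,v\}\cup\{w_k:k\neq j\}$ forms a $K_t$ in $G$, contradicting $K_t$-freeness.

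Each greedy step reduces to a lower bound on $|N_G(u)\cap N_G(v)\cap P_k|$ after avoiding a few $\mathcal{V}$-classes and the exceptional set $Y$. For a vertex $x\in\{u,v\}$ outside $Y$, the $\epsilon n_{t-1}$-closeness bullet of Theorem~\ref{strong stability} shows that $N_G(x)\cap P_k$ contains all but at most $\epsilon n_{t-1}+|V_{i_x}\cap P_k|$ vertices of $P_k$; for $x\in Y$, the last bullet of Theorem~\ref{strong stability} gives at least $\delta_3 n_{t-1}$ neighbors of $x$ in each $P_k$ lying in a complete $(t-1)$-partite subgraph. Because $\mathcal{P}_X$ is stable and hence $1$-partial, with each partial $V_i$ dominated in size by every integral part of $\mathcal{P}_X$, each intersection $V_i\cap P_k$ is either of size at most $\delta_2 n_{t-1}$ or essentially all of $P_k$ (the latter only if $V_i$ is integral in $P_k$, in which case $V_i\cap P_{k'}\subseteq X$ for $k'\neq k$). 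Feeding these bounds into the hierarchy $\delta_1\ll\delta_2\ll\delta_3\ll\epsilon$ yields $|N_G(u)\cap N_G(v)\cap P_k|\ge(\delta_3/2)\,n_{t-1}$ even after removing the forbidden $\mathcal{V}$-classes, and a routine greedy argument on the (almost complete $(t-2)$-partite) induced subgraph of $G$ on the union of these common-neighborhood sets locates the required transversal $K_{t-2}$.

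The main obstacle is the case where $u$ or $v$ lies in $Y$: a priori the complete $(t-1)$-partite subgraph joined to a $Y$-vertex could concentrate inside $V_{i_u}\cup V_{i_v}$ and destroy the common-neighborhood lower bound. Overcoming this uses the $1$-partial structure of a stable partition (which forces each $V_i$ to intersect at most one $P_k$ in more than $\delta_2 n_{t-1}$ vertices outside $X$) together with the parameter hierarchy from Theorem~\ref{strong stability}, which makes partial intersections negligible compared to $\delta_3 n_{t-1}$. Once this quantitative verification is in place, the greedy construction of $K_t$ goes through and the theorem follows.
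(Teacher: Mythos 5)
Your plan is to show that every edge $uv$ with both endpoints in the same part $P_j$ extends to a copy of $K_t$ by greedily choosing, for each $k\neq j$, a common neighbour $w_k\in P_k$ of $u$, $v$ and the previously chosen $w_{k'}$'s lying in a fresh class of $\mathcal{V}$. The pivotal quantitative claim is that $|N_G(u)\cap N_G(v)\cap P_k|\geq(\delta_3/2)n_{t-1}$ for every $k\neq j$. This claim is false in general, and the paper's proof is organized precisely around the cases where it fails: the edges it calls \emph{bad}. Concretely, take $u\in V_{i_1}$ and $v\in V_{i_2}$ with $u,v\in P_j$, and suppose $V_{i_1}$ and $V_{i_2}$ are both integral in a common part $P_\ell$ with $P_\ell$ essentially equal to $V_{i_1}\cup V_{i_2}$ (so $u,v$ are escaped vertices lying in $X\cap P_j$). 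This is compatible with $\mathcal{P}_X$ being stable to $\mathcal{V}_X$; for instance $|V_{i_1}|\approx|V_{i_2}|\approx m/2$ while all parts of $\mathcal{P}_X$ have size about $m$. Since $G$ is $r$-partite, $N_G(u)\cap P_\ell\subseteq P_\ell\setminus V_{i_1}\approx V_{i_2}$ and $N_G(v)\cap P_\ell\subseteq P_\ell\setminus V_{i_2}\approx V_{i_1}$, so $N_G(u)\cap N_G(v)\cap P_\ell$ is essentially empty. No tuning of the hierarchy $\delta_1\ll\delta_2\ll\delta_3\ll\epsilon$ can repair this, because the obstruction is structural, not a matter of error terms. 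Your intermediate assertion that each $V_i\cap P_k$ is either of size at most $\delta_2 n_{t-1}$ or essentially all of $P_k$ is also not a consequence of stability: a class $P_k$ may contain several integral $V_i$'s, each a proper fraction of $P_k$.

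The paper handles exactly this scenario in Claim~2 of the proof of Theorems~\ref{strong bollobas} and~\ref{conjecture}: it separates intra-class edges into good ones (which do extend to a transversal $K_{t-2}$, and which cannot exist when $k=1$) and bad ones (the case above), and for bad edges it does not attempt to build a $K_t$ through $uv$. Instead it identifies the forced structure (a ``bad class'' $P_{\ell(i_1,i_2)}$ exhausted by $V_{i_1}\cup V_{i_2}$), builds the auxiliary graphs $G^\ast$, $H_1$, $H_2$ by completing $\mathcal{P}$ and swapping the pieces $V_{i_1}\cap P_j$, $V_{i_2}\cap P_j$ with pieces of $P_{\ell(i_1,i_2)}$, and shows these remain $K_t$-free with strictly more edges, contradicting the extremality of $G$. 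That extremal rearrangement step, which replaces the greedy $K_t$-construction in the bad case, is the missing ingredient in your argument.
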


The second application of our stability theorems is the extremal problem of vertex-disjoint copies of a clique in multi-partite graphs.
Let $kK_{t}$ be the vertex-disjoint union of $k$ copies of $K_{t}$.
Chen, Li and Tu \cite{Chen} gave the value of  ex$(n_1,n_2,kK_2)$.
Later, De Silva, Heysse and Young \cite{Silva}  strengthened Chen, Li and Tu's result and gave a problem about ex$(n_1, \ldots,n_r,kK_t)$ when $r>t$.
In the most recently, Han and Zhao \cite{Han} determined ex$(n_1,n_2,n_3,n_4,kK_3)$ and gave the following conjecture.

\begin{conjecture}\label{-9}
Given $r \geq t \geq 3$ and $k \geq 2$, let $n_r$ be sufficiently large.
Then \begin{equation*}
{\rm ex}(n_1,\ldots,n_r,kK_{t})=f(n_1,\ldots,n_r,k,t).
\end{equation*}
\end{conjecture}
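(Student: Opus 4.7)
The plan is to establish matching lower and upper bounds on $\mathrm{ex}(n_1,\ldots,n_r,kK_t)$.

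For the lower bound, I would exhibit a construction attaining $f(n_1,\ldots,n_r,k,t)$. Let $\mathcal{P}^*=(P_1,\ldots,P_{t-1})$ be a partition of $[r]$ attaining the maximum defining $f(n_1,\ldots,n_r,k,t)$, let $I^*\in\mathcal{P}^*$ realize $n_{I^*}-m_{I^*}=n_{\mathcal{P}^*}$, and pick $V_{j^*}\subseteq I^*$ with $|V_{j^*}|=m_{I^*}$. Form $G^*$ from $K_{n_1,\ldots,n_r}[\mathcal{P}^*]$ by fixing a set $U\subseteq V_{j^*}$ of $k-1$ apex vertices and adding every edge from $U$ to $I^*\setminus V_{j^*}$. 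This inserts exactly $(k-1)n_{\mathcal{P}^*}$ within-class edges, giving $e(G^*)=f(n_1,\ldots,n_r,k,t)$. Any $K_t$ in $G^*$ must meet $U$, for otherwise it would embed in the $(t-1)$-chromatic graph $K_{n_1,\ldots,n_r}[\mathcal{P}^*]$; hence $G^*$ contains at most $|U|=k-1$ vertex-disjoint $K_t$'s and is $kK_t$-free.

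For the upper bound I would induct on $k$, the base case $k=1$ being Theorem~\ref{extremal number 1}. Let $G$ be an extremal $kK_t$-free $r$-partite graph, so $e(G)\geq f(n_1,\ldots,n_r,k,t)\geq f(n_1,\ldots,n_r,1,t)$. Because $\chi(kK_t)=t$, Theorem~\ref{strong stability} furnishes an $(X,\delta_2)$-stable $(t-1)$-partition $\mathcal{P}$ together with a bounded exceptional set $Y$ such that every $y\in Y$ is adjacent to at least $\delta_3 n_{t-1}$ vertices in each class of $\mathcal{P}$, these neighborhoods spanning complete $(t-1)$-partite subgraphs. Two greedy arguments now cut down the configuration: first, $|Y|\leq k-1$, since otherwise one selects $k$ vertex-disjoint $K_t$'s by routing each through a distinct $Y$-vertex and fresh vertices from its enormous neighborhood (possible as $\delta_3 n_{t-1}\gg k(t-1)$); second, by the same augmentation trick, $G-Y$ must be $(k-|Y|)K_t$-free. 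Setting $k'=k-|Y|$ and $n_i^-=n_i-|Y\cap V_i|$, the inductive hypothesis (or Theorem~\ref{extremal number 1} when $k'=1$) yields $e(G-Y)\leq f(n_1^-,\ldots,n_r^-,k',t)$.

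What remains is the arithmetic inequality
\[
f(n_1^-,\ldots,n_r^-,k',t)+e(Y,V(G)\setminus Y)+e(G[Y])\;\leq\;f(n_1,\ldots,n_r,k,t),
\]
which is where I expect the bulk of the work to lie. The left-hand side is naturally evaluated by a witnessing partition $\mathcal{Q}$ of the reduced instance together with the placement of the at most $k-1$ $Y$-vertices, and one must compare this to the optimizer $\mathcal{P}^*$ of the construction. The central technical lemma to establish is that pooling all $k-1$ apex vertices into a single class of $\mathcal{P}^*$ adjacent to a smallest original part is always at least as profitable as any split placement; this hinges on a monotonicity property of $n_{\mathcal{Q}}$ under vertex deletion together with a re-pooling argument for the $f$-function. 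The stability structure of $\mathcal{P}$ helps by pinning down where $Y$ can sit relative to $\mathcal{P}$, but the hardest subcases will be those in which the extremal partition is not unique, or in which several of the $n_i$ are close to equal, so that the slack built into the definition of \emph{stable} partition must be exploited carefully.
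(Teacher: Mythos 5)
Your lower bound construction and the decision to route the upper bound through Theorem~\ref{strong stability} together with induction on $k$ match the paper's plan in broad outline. But the proposal has one structural difference and, more importantly, one genuine gap.

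\textbf{Structural difference (removing $Y$ at once vs.\ one at a time).} The paper removes a \emph{single} vertex $a\in Y$, observes via property (d) that $G-a$ must be $(k-1)K_t$-free, and then concludes immediately with
$e(G)\le e(G-a)+(n-n_\iota)\le g(n_1,\ldots,n_\iota-1,\ldots,n_r,k-1,t)+(n-n_\iota)\le g(n_1,\ldots,n_r,k,t)$,
where the last step is trivial because re-inserting an apex vertex into the extremal graph for the reduced instance produces a graph in $\mathcal{H}^{k-1}$ for the original instance. You instead remove all of $Y$ at once, which is why you are forced into the ``arithmetic inequality'' comparing $f(n_1^-,\ldots,n_r^-,k',t)+e(Y,V\setminus Y)+e(G[Y])$ with $f(n_1,\ldots,n_r,k,t)$, and you correctly flag this as where the work lies. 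The one-vertex-at-a-time reduction sidesteps that calculation entirely, so I would adopt it.

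\textbf{The gap: $Y=\emptyset$.} Your reduction only makes progress when $Y\ne\emptyset$. When $Y=\emptyset$ you have $k'=k$ and $G-Y=G$, so ``apply the inductive hypothesis'' is circular. This is not a corner case --- it is the case that carries the real content of the theorem, and it is also exactly what is needed already for the strengthened $k=1$ statement (Theorem~\ref{strong bollobas}), which you cannot get from Theorem~\ref{extremal number 1} alone. The paper handles it with a separate, non-inductive argument: classify the edges inside classes of $\mathcal{P}$ as \emph{good} (extendable to a $K_{t-2}^{kt}$ across the other classes) or \emph{bad}; show there are at most $k\epsilon n_{t-1}$ good edges since a matching of $k$ of them would yield $kK_t$; and then prove (Claim 2 of the paper) that when $Y=\emptyset$, after deleting the good edges the remaining bad intra-class edges force, via a sequence of neighbor-shifting operations using Proposition~\ref{neighbour copy keeps free} and the stability of $\mathcal{P}_X$, a $K_t$-free graph with strictly more than $f(n_1,\ldots,n_r,1,t)$ edges --- a contradiction --- unless there are no intra-class edges at all. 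This yields $e(G)\le f(n_1,\ldots,n_r,1,t)+k\epsilon n_{t-1}<f(n_1,\ldots,n_r,k,t)$, contradicting extremality and showing $Y\ne\emptyset$. Your outline contains no analogue of this argument, and without it the induction never gets off the ground.
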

They gave a graph achieving the lower bound, which is nearly (see the remark below) the lower bond graph of a $K_{t}$-free extremal graph by adding $O(n)$ edges on it.

\remark There are extremal graphs for $kK_t$ which are not obtained by adding edges to an extremal $(t-1)$-partition.
Let $K=K_{n_1,\ldots,n_6}$ with classes $V_1,\ldots,V_6$.
Let $n_1=n_2=m+1$ and $n_3=\ldots,n_6=m$.
Note that one extremal graph for $3K_3$ is achieved by adding edges to the partition $\mathcal{P}=(V_1\cup V_2\cup V_3,V_4\cup V_5\cup V_6)$.
However $\mathcal{P}$ is not an extremal 2-partition.
\medskip

Let $\mathcal{H}$ be the family of  $(t-1)$-partite graphs in $K_{n_1,\ldots,n_r}$.
Let $\mathcal{H}^{k-1}$ be the family of graphs obtained from $H\in \mathcal{H}$ by joining all possible edges incident with $k-1$ fixed vertices.
Define
$$g(n_1,\ldots,n_r,k,t):=\max\left\{ e(F):F \in\mathcal{H}^{k-1} \right\}.$$

As an application of Theorem~\ref{strong stability}, we will confirm Han and Zhao's conjecture in the following stronger form.
\begin{theorem}\label{conjecture}
Given $r \geq t \geq 3$ and $k \geq 2$, let $n_1 \geq \ldots \geq n_r$  and $n_{t-1}$ be sufficiently large.
Then \begin{equation*}
{\rm ex}(n_1,\ldots,n_r,kK_{t})=g(n_1,\ldots,n_r,k,t).
\end{equation*}
Moreover, all extremal graphs are obtained from $(t-1)$-partite graphs by joining all possible edges incident with $k-1$ fixed vertices.
\end{theorem}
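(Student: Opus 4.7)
The plan is to deduce the theorem from Theorem~\ref{strong stability} together with Theorem~\ref{strong bollobas}. The case $k=1$ is precisely Theorem~\ref{strong bollobas} (giving $g(n_1,\ldots,n_r,1,t)=f(n_1,\ldots,n_r,1,t)$), so I assume $k\ge 2$. The lower bound is immediate from the construction: any $F\in\mathcal{H}^{k-1}$ is $kK_t$-free, since the $k-1$ distinguished vertices form a cover for all copies of $K_t$ (removing them leaves a $(t-1)$-partite hence $K_t$-free graph), so by pigeonhole $F$ has no $k$ vertex-disjoint copies of $K_t$.

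For the upper bound and the structural conclusion, take an extremal $kK_t$-free $G\subseteq K_{n_1,\ldots,n_r}$. Since $g-f=\Theta(n_{t-1})>0$ when $k\ge 2$, $G$ must contain a copy of $K_t$. First I would select a maximum family $M$ of vertex-disjoint $K_t$'s in $G$, of size $\ell\le k-1$, and set $U=V(M)$; then $G-U$ is $K_t$-free and $e(G-U)\ge e(G)-t\ell n=f(n_1',\ldots,n_r',1,t)-O(n_{t-1})$, where the $n_i'$ are the reduced part sizes. Applying Theorem~\ref{weak stability} to $G-U$, and then Theorem~\ref{strong stability} to an extremal $K_t$-free supergraph of $G-U$ in $K_{n_1',\ldots,n_r'}$, I obtain a $(t-1)$-partition $\mathcal{P}$ and a bounded exceptional set $Y$ such that off $Y$ the graph $G-U$ is $\epsilon n_{t-1}$-close to $K_{n_1',\ldots,n_r'}[\mathcal{P}]$.

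Next I would locate a set $W$ of $k-1$ vertices covering every $K_t$ in $G$. Because $\mathcal{P}$ renders $G-U$ essentially $(t-1)$-partite outside $Y$, every $K_t$ in $G$ must meet the bounded set $U\cup Y$, so selecting $W$ reduces to a finite-dimensional combinatorial problem. I expect $|W|=k-1$ on the nose, by a two-sided argument: if $|W|\ge k$, the complete $(t-1)$-partite neighborhoods guaranteed by the last bullet of Theorem~\ref{strong stability} allow me to enlarge $M$ to $k$ vertex-disjoint copies of $K_t$, contradicting $kK_t$-freeness; if $|W|\le k-2$, an edge count against $g$ gives $e(G)<g$, contradicting extremality.

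Once $W$ is pinned down with $|W|=k-1$, the graph $G-W$ is $K_t$-free and hence lies in $\mathcal{H}$ by Theorem~\ref{strong bollobas}. Extremality then forces each $w\in W$ to be joined to every permitted neighbor, since otherwise adding a missing edge $wu$ preserves $kK_t$-freeness ($W$ continues to cover all $K_t$'s in $G+wu$), contradicting the maximality of $e(G)$. Therefore $G\in\mathcal{H}^{k-1}$ and $e(G)=g(n_1,\ldots,n_r,k,t)$, completing both the count and the characterization. The main obstacle will be showing $|W|=k-1$ on the nose: the $K_t$-cover number can in general exceed the $K_t$-matching number, so I must fully exploit the quantitative structure from Theorem~\ref{strong stability}, particularly the complete $(t-1)$-partite blow-ups around exceptional vertices, in order to perform the needed swap of $K_t$-matchings when attempting to augment $M$.
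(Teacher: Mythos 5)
Your high-level plan (use stability to locate a small set whose removal leaves a $(t-1)$-partite graph, then count) is in the right spirit, but it diverges from the paper's argument and has concrete gaps as written.

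The paper does not pass through a maximum $K_t$-matching or a $K_t$-cover. Instead it applies Theorem~\ref{strong stability} directly to the extremal $kK_t$-free graph $G$ (valid since $e(G)\geq f(n_1,\ldots,n_r,1,t)$ and every $(t-1)$-partite graph is $kK_t$-free), obtains the exceptional set $Y$ of bounded size, and then proceeds by \emph{induction on $k$}: if $Y\neq\emptyset$, it picks $a\in Y$, uses the fourth bullet of Theorem~\ref{strong stability} to see that $a$ sits on a large complete $(t-1)$-partite blow-up and hence $G-a$ must be $(k-1)K_t$-free, and applies the induction hypothesis to $G-a$ together with a degree bound on $a$. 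If $Y=\emptyset$, a separate good/bad-edge argument (Claims~1 and~2) gives $e(G)<g$, a contradiction. The whole cover-versus-matching issue that you correctly flag as ``the main obstacle'' is thereby sidestepped.

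Beyond being a different route, your sketch has two genuine gaps. First, the step ``$G-W$ is $K_t$-free and hence lies in $\mathcal{H}$ by Theorem~\ref{strong bollobas}'' is not correct: Theorem~\ref{strong bollobas} says that \emph{extremal} $K_t$-free $r$-partite graphs are $(t-1)$-partite; a $K_t$-free graph need not be $(t-1)$-partite at all (e.g.\ $C_5$ is triangle-free but not bipartite). You would need to show $G-W$ is extremal, or replace it by an extremal graph and argue the replacement cannot decrease $e(G)$, before invoking Theorem~\ref{strong bollobas}. Second, the step ``apply Theorem~\ref{strong stability} to an extremal $K_t$-free supergraph of $G-U$'' is not sound: $G-U$ need not extend to an extremal $K_t$-free graph in $K_{n_1',\ldots,n_r'}$, and even if it did, the closeness-to-$K[\mathcal{P}]$ conclusion of Theorem~\ref{strong stability} would concern that supergraph, not $G-U$ itself. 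What you actually want here is Theorem~\ref{weak stability} applied to $G-U$ (which only needs a near-extremal edge count, not extremality), but then you lose the bounded exceptional set $Y$ that is essential to your subsequent pigeonhole.

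Finally, the crux you identify — that the $K_t$-cover number can exceed the $K_t$-matching number, so $|W|=k-1$ is not automatic — is indeed the hard part of your route and is left unresolved. The paper's inductive removal of vertices of $Y$ one at a time is precisely a way of avoiding having to prove such a cover statement in one shot.
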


The organization of this paper is as follows.
In Section~\ref{section 2}, we introduce some lemmas.
In Section~\ref{section 3}, we present the properties of extremal graphs for $K^s_t$ and prove Theorem~\ref{extremal (t-1)-partition}.
In Sections~\ref{section 4} and \ref{section 6}, we give the proofs of Theorems~\ref{weak stability}, \ref{strong stability}, \ref{strong bollobas} and \ref{conjecture}.

\section{Preliminaries}\label{section 2}
\subsection{Definition and Notation}
\noindent Recall that we mostly consider $r$-partite graph with parts $\mathcal{V}=(V_1,\ldots,V_r)$.
Let  $V=\bigcup_{i=1}^rV_i$.
Let $x$ be any vertex of a given graph $G$, the \textcolor{blue}{{\it neighborhood}} of $x$ in $G$ is denoted by $N_G(x)=\{y\in V(G):(x,y)\in E(G)\}$.
The \textcolor{blue}{{\it degree}} of $x$ in $G$, denoted by $d_{G}(x)$, is the size of $N_G(x)$.
Given a graph $G$ and a subset $A$ of $V(G)$, let $N_G[A]$ be the set of common neighbours of $A$ in $G$.
Given two disjoint independent sets $A,B$ of $G$, let $G_{A\rightarrow B}$ be the graph obtained from $G$ by deleting edges incident with $A$ and adding each possible edge between $A$ and $B$.
For an  independent set $A$ of $G$, let  $G_{A}=G_{A\rightarrow N_G(u)}$  where $u$ is a vertex with maximum degree in $A$.

Given a partition $\mathcal{P}=(P_1,\ldots,P_{t-1})$ of $V$, we define $\mathcal{P} \wedge \mathcal{V}$ as a refinement of $\mathcal{V}$:
$$\mathcal{P} \wedge \mathcal{V}:=(P_i\cap V_j: i\in [t-1],j\in [r]).$$


We say a family of numbers $a_1,\ldots,a_s$ with $a_1\geq \ldots\geq a_s$ is \textcolor{blue}{{$L$-$balance$}}, if $a_i\geq a_{i-1}/L^4$ for each $i\in [s]$.
We partition the family of numbers $n_1,\ldots,n_r$ into maximal $L$-balance families $B_i$ such that each member of $B_{i-1}$ is larger than each member of $B_i$.
If $|B_1|\geq t-1$, then we set $ \tau(n_1,\ldots,n_r,t,L)=0$.
Otherwise, let
$$\tau(n_1,\ldots,n_r,t,L):=\max \left\{x: \sum\limits^x_{i=1}|B_i|< t-1\right\}.$$

Graph Removal Lemma is widely used in extremal graph theory and related topics.
We will apply the following simple form of the Removal Lemma.

\begin{lemma}[A simple form of the Removal Lemma \cite{Furedi2015}]\label{removal lemma}
For every $\alpha>0$, $s$ and $t\geq 3$, there is a $\delta$ such that if $n> 1/\delta$ and $G$ is an $n$-vertex $K_t^s$-free graph then it contains a $K_t$-free subgraph $H$ with $e(H)>e(G)-\alpha n^2$.
\end{lemma}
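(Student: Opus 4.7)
The plan is to derive the statement from Szemer\'edi's Regularity Lemma combined with the blow-up Embedding Lemma (equivalently, a Counting Lemma for $K_t^s$). Given $\alpha>0$, $s$, and $t\geq 3$, I would first fix auxiliary parameters $\epsilon,d,1/M$ all much smaller than $\alpha$, chosen so that the Embedding Lemma applied to $t$ pairwise $\epsilon$-regular bipartite graphs of density at least $d$ and sides of size at least $m_0=m_0(\epsilon,d,s,t)$ produces a copy of $K_t^s$. Then I would declare $\delta$ small enough that $n>1/\delta$ guarantees both $n/M\geq m_0$ and the Regularity Lemma is applicable with accuracy $\epsilon$ and lower bound on the number of parts chosen so that $1/k<\alpha/3$.

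Next I would apply the Regularity Lemma to $G$ to obtain an equitable $\epsilon$-regular partition $V_1,\ldots,V_k$ with $k\leq M$. Form the subgraph $H$ by deleting three types of edges of $G$: (i) edges lying inside some $V_i$; (ii) edges lying in $\epsilon$-irregular pairs $(V_i,V_j)$; (iii) edges lying in pairs of density less than $d$. A routine count gives
$$e(G)-e(H)\leq k\binom{n/k}{2}+\epsilon\binom{k}{2}(n/k)^2+d\binom{k}{2}(n/k)^2\leq\left(\tfrac{1}{k}+\epsilon+d\right)n^2<\alpha n^2.$$

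Finally, I would show $H$ is $K_t$-free. Assume for contradiction that $H$ contains a copy of $K_t$; because within-cluster edges were removed, its vertices lie in $t$ distinct clusters $V_{i_1},\ldots,V_{i_t}$, and for each pair the surviving bipartite graph between them is $\epsilon$-regular with density at least $d$. Since $n/k\geq m_0$, the Embedding Lemma for blow-ups (applied by iteratively restricting to common neighborhoods that remain large by the regularity condition) produces $s$ vertices in each $V_{i_j}$ spanning a copy of $K_t^s=T(st,t)$ inside $G$, contradicting the $K_t^s$-freeness of $G$. Hence $H$ is the desired $K_t$-free subgraph with $e(H)>e(G)-\alpha n^2$.

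The main obstacle is really packaged inside the Embedding Lemma step, which requires showing that after fixing vertices of the embedding one at a time, the common neighborhood of each partially embedded $K_t^s$-skeleton still contains $\Omega(n/k)$ vertices in each remaining cluster; this is exactly the content of the standard iterative regularity argument, and controls the dependence of $\delta$ on $\alpha,s,t$ (it becomes tower-type in $1/\alpha$, matching the F\"uredi formulation cited in the paper).
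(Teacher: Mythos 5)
The paper does not prove this lemma itself; it cites it from F\"uredi (2015). Your reconstruction via Szemer\'edi's Regularity Lemma combined with the Embedding (Key) Lemma for blow-ups is the standard argument for precisely this kind of statement, and it is correct: the three-way cleaning (within-cluster edges, edges in $\epsilon$-irregular pairs, edges in pairs of density below $d$) with bound $(1/k+\epsilon+d)n^2<\alpha n^2$, followed by the observation that a surviving $K_t$ must sit across $t$ distinct clusters forming pairwise $\epsilon$-regular pairs of density at least $d$ --- which the Embedding Lemma converts into a copy of $K_t^s$ in $G$ --- is exactly the intended mechanism, and your parameter hierarchy ($\epsilon$ small relative to $d,s,t$, then the lower bound on the number of parts, then $\delta$ small enough that $n/M\geq m_0$) is in the right order.
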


The following lemma can be found in the classic book of Bollob\'{a}s \cite{B1976}.

\begin{lemma}[Selection lemma \cite{B1976}\label{selection lemma}]
Let $\epsilon_1,\ldots,\epsilon_k$ be positive numbers, $0<\alpha<1$ and $t$ be a natural number.
There exists a natural number $N=N(\epsilon_1,\ldots,\epsilon_k;\alpha;t)$ with the following property.
For $V_1,\ldots,V_k$ with $|V_i|=n_i$ and $A_{ij}\subset V_i$ with $j\in [N]$, if
$$\frac{1}{N}\sum_{j=1}^N |A_{ij}|\geq \epsilon_i n_i \quad\mbox{ for }\quad i=1,\ldots,k,$$
then there is a subset $T\subset [N]$ with $|T|=t$, such that if $S\subseteq T$ then
$$\left|\bigcap\nolimits_{j\in S} A_{ij}\right|\geq (\alpha \epsilon_i)^{|S|}n_i \quad\mbox{ for }\quad i=1,\ldots,k.$$
\end{lemma}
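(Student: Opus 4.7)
The plan is to prove the Selection Lemma via a probabilistic/inductive argument. Let $d_i(v):=|\{j\in[N]:v\in A_{ij}\}|$ for $v\in V_i$, so the hypothesis reads $\sum_{v\in V_i}d_i(v)\ge\epsilon_iNn_i$. The starting point is the counting identity: for a uniformly random $s$-subset $S\subseteq[N]$, convexity of $x\mapsto\binom{x}{s}$ gives
$$E\Bigl[\Bigl|\bigcap_{j\in S}A_{ij}\Bigr|\Bigr]=\frac{1}{\binom{N}{s}}\sum_{v\in V_i}\binom{d_i(v)}{s}\ge n_i\cdot\frac{\binom{\epsilon_iN}{s}}{\binom{N}{s}}\ge\epsilon_i^sn_i\bigl(1-O_t(1/N)\bigr).$$
So, in expectation, a random $s$-subset already yields an intersection of size close to the target $\epsilon_i^s n_i$.

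To lift this expectation into the simultaneous bound required for every $S\subseteq T$ and every $i\in[k]$, I would induct on $t$. The base case $t=1$ says one can find $j\in[N]$ with $|A_{ij}|\ge\alpha\epsilon_i n_i$ for every $i$; averaging together with a union bound over $i\in[k]$ handles this because $\alpha<1$ leaves a positive-density set of good indices for each $i$. For the inductive step, select a first index $j_1\in[N]$ via the double counting identity
$$\sum_{j_1=1}^{N}\sum_{v\in A_{ij_1}}d_i(v)=\sum_{v\in V_i}d_i(v)^2\ge\frac{\bigl(\sum_v d_i(v)\bigr)^2}{n_i}\ge\epsilon_i^2N^2n_i,$$
choosing $j_1$ so that $|A_{ij_1}|$ sits in a window forcing both $|A_{ij_1}|\ge\alpha\epsilon_i n_i$ and an average density of $\{A_{ij}\cap A_{ij_1}\}_{j\neq j_1}$ inside $V_i':=A_{ij_1}$ that is essentially $\epsilon_i$. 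Applying the inductive hypothesis to this restricted system on $[N]\setminus\{j_1\}$ with size $t-1$ produces $T'$, and $T:=T'\cup\{j_1\}$ satisfies the full conclusion once one unwinds the restriction.

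The main obstacle is locating a single $j_1$ that works simultaneously for all $k$ indices $i\in[k]$ and for which the restricted system does not degenerate. This is handled by showing, for each $i$ separately, that the set of ``good'' $j_1\in[N]$ has positive density bounded below by a constant depending on $\alpha$ and $\epsilon_i$; the intersection of these $k$ subsets of $[N]$ is then nonempty once $N$ is sufficiently large. Quantitatively, $N=N(\epsilon_1,\ldots,\epsilon_k;\alpha;t)$ must absorb the $1-o(1)$ slack at every level of the induction, which over $t$ levels forces an $N$ growing (at worst tower-type) in $t$ and polynomially in $1/\alpha$ and $1/\min_i\epsilon_i$; the slack $\alpha<1$ in the conclusion provides precisely the room needed for this absorption.
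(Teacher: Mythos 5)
The paper does not prove Lemma~\ref{selection lemma}: it is quoted from Bollob\'as's book \cite{B1976} and used as a black box in the proof of Theorem~\ref{strong stability}, so there is no in-paper argument to compare your proposal against. That said, your sketch has a genuine gap at the step where you select a single index ($j$ in the base case, $j_1$ in the inductive step) that is simultaneously ``good'' for all $k$ families. You argue that for each $i$ the set of good indices has positive density in $[N]$, and that the intersection of these $k$ sets is nonempty once $N$ is large. Positive-density subsets of $[N]$ need not intersect, no matter how large $N$ is: from the averaging hypothesis the density of good $j$ for a fixed $i$ is only about $(1-\alpha)\epsilon_i$, and a union bound over $i\in[k]$ would require $\sum_{i}\bigl(1-(1-\alpha)\epsilon_i\bigr)<1$, which fails already for $k=2$ unless every $\epsilon_i$ is close to $1$.

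In fact, with the averaging hypothesis exactly as written the statement is false for $k\ge 2$: take $k=2$, $N$ even, and set $A_{1j}=V_1$, $A_{2j}=\emptyset$ for $j\le N/2$, and $A_{1j}=\emptyset$, $A_{2j}=V_2$ for $j>N/2$. Then $\frac{1}{N}\sum_j|A_{ij}|=n_i/2$, so the hypothesis holds with $\epsilon_1=\epsilon_2=1/2$, yet no single $j$ has both $A_{1j}$ and $A_{2j}$ nonempty, so the conclusion fails already for $t=1$ and any $\alpha>0$. The version actually invoked in the proof of Theorem~\ref{strong stability} has the pointwise hypothesis $|A_{ij}|\ge\epsilon_i n_i$ for every $i$ and $j$ (each $Q_i(a)$ there satisfies an individual lower bound on its size), and that is presumably the hypothesis in Bollob\'as's formulation. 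Under the pointwise hypothesis the base case $t=1$ is trivial, but the inductive step still needs more than you indicate: after restricting to $V_i'=A_{ij_1}$ the lower bound you can guarantee on the restricted density is again only an average, and, unless $j_1$ is chosen with additional care, it degrades to roughly $\epsilon_i^{2}$ rather than $\epsilon_i$, so one cannot literally re-invoke the inductive hypothesis with the same parameters. Your convexity identity $\sum_{v}\binom{d_i(v)}{s}\ge n_i\binom{\epsilon_iN}{s}$ is the right starting point, but the aggregation over $i\in[k]$ and over all $S\subseteq T$ is where the actual difficulty sits, and the union-bound shortcut you propose does not close it.
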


The following proposition which will be used when the differences of $V_i$'s are large.
\begin{proposition}\label{size gape}
Let $N\leq \sum\nolimits_{i=1}^m u_i$.
If $ {\epsilon}<(1/m)^{10^m}$, then there exists an $\eta\in [\epsilon,\sqrt[10^m]{\epsilon}]$ such that either $u_i< \eta N$ or $u_i\geq \sqrt[10]{\eta}N$ for each $i\in[m]$.
\end{proposition}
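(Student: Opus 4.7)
The plan is to apply a pigeonhole argument over a geometric sequence of candidate values. I would first define the candidates
\[\eta_k := \epsilon^{1/10^k}, \qquad k=0,1,\ldots,m,\]
all of which lie in the required interval $[\epsilon,\epsilon^{1/10^m}]$; the sequence is strictly increasing because $\epsilon<1$ forces $\epsilon$ raised to a smaller positive exponent to be larger.

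Next, for each candidate $\eta_k$ I would identify its \emph{bad set}, namely the set of values whose presence among $u_1,\ldots,u_m$ would disqualify $\eta_k$. Since $\eta_k^{1/10}=\eta_{k+1}$, the bad set for $\eta_k$ is exactly the half-open interval
\[J_k := [\eta_k N,\eta_{k+1}N) = [\epsilon^{1/10^k}N,\epsilon^{1/10^{k+1}}N).\]
The key observation is that $J_0,J_1,\ldots,J_m$ are pairwise disjoint intervals, being consecutive pieces of the same geometric progression on $[\epsilon N,\epsilon^{1/10^{m+1}}N)$.

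The finishing step is pure pigeonhole: we have $m+1$ disjoint bad intervals but only $m$ numbers $u_1,\ldots,u_m$, so at least one $J_k$ contains none of the $u_i$. The corresponding $\eta_k$ is then a valid choice, since every $u_i$ lies either below $\eta_k N$ or at or above $\eta_{k+1}N=\eta_k^{1/10}N$, exactly as required.

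I do not foresee a substantive obstacle in the argument itself. The hypothesis $\epsilon<(1/m)^{10^m}$ does not appear to be needed for the pigeonhole; its role is rather to ensure that $\eta_m=\epsilon^{1/10^m}<1/m$, so that every candidate value remains genuinely small and the conclusion is useful when the proposition is later plugged into the arguments of Sections~\ref{section 4} and~\ref{section 6}. Similarly, the condition $N\leq\sum u_i$ plays no role in the existence of $\eta$; I expect it is stated only to fix the scale $N$ relative to the sizes $u_i$ in the intended applications.
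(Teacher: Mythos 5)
Your pigeonhole argument is correct and is a genuinely cleaner route than the paper's. The paper sorts $0=u_0\leq u_1\leq\cdots\leq u_m$ and argues by cases: either some index $i$ satisfies $u_i\leq \epsilon^{1/10^i}N$ and $u_{i+1}\geq \epsilon^{1/10^{i+1}}N$, in which case $\eta=\epsilon^{1/10^i}$ works, or else (a cascading induction starting from $u_0=0$ shows) every $u_i<\epsilon^{1/10^i}N\leq\epsilon^{1/10^m}N$, whence $\sum u_i< m\,\epsilon^{1/10^m}N<N$ by the hypothesis $\epsilon<(1/m)^{10^m}$, contradicting $N\leq\sum u_i$. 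Your version replaces the sort-and-contradict structure with a one-shot pigeonhole over the $m+1$ disjoint half-open annuli $J_k=[\eta_kN,\eta_{k+1}N)$, which is both shorter and makes transparent that the existence of a valid $\eta$ is unconditional: you are right that the hypotheses $\epsilon<(1/m)^{10^m}$ and $N\leq\sum u_i$ are not needed for the pigeonhole itself and are present only so that $\eta\leq\epsilon^{1/10^m}$ is small enough to be useful downstream. One small point worth noting: both your proof and the paper's put each $u_i$ either strictly below $\eta N$ or at or above $\eta^{1/10}N$, which is exactly the stated conclusion, so the strict-versus-non-strict bookkeeping works out.
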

\begin{proof}
Let $0=u_0\leq u_1\leq \ldots \leq u_m$.
If  $u_i\leq \sqrt[10^i]{\epsilon}N$ and $u_{i+1}\geq \sqrt[10^{i+1}]{\epsilon}N$ for some $i\in \{0,1,\ldots,m-1\}$, then let $\eta=\sqrt[10^i]{\epsilon}$.
Otherwise, we have $\sum\nolimits_{i=1}^m u_i\leq m\sqrt[10^m]{\epsilon}N<N$, a contradiction.
The proof of Proposition~\ref{size gape} is complete.
\end{proof}

If $\mathcal{P}$ is  stable  to $\mathcal{V}$, then $\mathcal{P} \wedge \mathcal{V} $ has some good properties.
We need the following propositions concerning the properties when $\mathcal{P}$ is  stable  to $\mathcal{V}$ or  $\mathcal{P}$ is close (see $\epsilon$-stable below) stable  to $\mathcal{V}$.

\begin{proposition}\label{larger than n_{t-1}}
Let $\mathcal{P}$ be  stable  to $\mathcal{V}$.
The size of the integral part of each class of $\mathcal{P}$ is at least  $n_{t-1}$.
\end{proposition}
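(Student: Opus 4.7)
My approach is a direct case analysis based on where the $t-1$ largest parts $V_1,\dots,V_{t-1}$ of $\mathcal{V}$ sit with respect to $\mathcal{P}$. Let $P_{j^*}$ minimize the integral-part size, and write $I_j$ for the integral part of $P_j$; it suffices to show $|I_{j^*}|\ge n_{t-1}$.

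First I would dispose of two easy cases. If some $V_i$ with $i\le t-1$ is integral and lies inside $P_{j^*}$, then $|I_{j^*}|\ge |V_i|=n_i\ge n_{t-1}$ immediately. If instead some $V_i$ with $i\le t-1$ is partial, then the third clause in the definition of \emph{stable}---that the integral part of every class of $\mathcal{P}$ is no less than the size of any partial class of $\mathcal{V}$---gives $|I_{j^*}|\ge |V_i|\ge n_{t-1}$.

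In the remaining case every $V_i$ with $i\le t-1$ is integral and none lies in $P_{j^*}$. There are $t-1$ such parts to distribute among the $t-2$ classes of $\mathcal{P}$ different from $P_{j^*}$, so pigeonhole yields a class $P_k$ ($k\neq j^*$) containing two of them, say $V_a$ and $V_b$ with $a,b\le t-1$. Then $|P_k\setminus V_a|\ge |V_b|=n_b\ge n_{t-1}$, and the fourth clause of stability applied to the integral $V_a\subseteq P_k$ forces this quantity to be at most $|I_{j^*}|$. Combining gives $|I_{j^*}|\ge n_{t-1}$, as desired.

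The proposition is essentially just an unpacking of the four defining clauses of ``stable'', so I do not anticipate any genuine obstacle; the only moment deserving care is the pigeonhole step that triggers the fourth clause, together with noting that the argument is insensitive to whether ``the resulting set'' in that clause is read as $P_k\setminus V_a$ or only as its integral part (both contain $V_b$, hence both have size at least $n_{t-1}$).
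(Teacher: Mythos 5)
Your proof is correct and follows essentially the same route as the paper's: a direct unpacking of the four clauses defining ``stable.'' The paper's one-sentence proof simply asserts that a small integral part forces a large partial class of $\mathcal{V}$ and invokes only the third clause, whereas you explicitly isolate the all-integral case and close it with pigeonhole plus the fourth clause, so your version is actually the more complete of the two.
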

\begin{proof}
If there is a class of $\mathcal{P}$ whose integral part is of size less than $n_{t-1}$, then there is an partial class of $\mathcal{V}$ whose size is at least $n_{t-1}$.
This is a contradiction to the fact that the size of the integral part of any class of $\mathcal{P}$ is larger than the size of any partial class of $\mathcal{V}$.
\end{proof}

We say that $\mathcal{P}$ is  \textcolor{blue}{{\it $\epsilon$-stable}} to $\mathcal{V}$ if the followings hold,
\begin{itemize}
  \item  $\mathcal{P}$ is $1$-partial to $\mathcal{V}$,
  \item  the sizes of integral parts  of partial classes of $\mathcal{P}$ are difference at most $\epsilon$, and are at most $\epsilon$ larger than the size of each integral class of $\mathcal{P}$,
  \item  the size of the integral part of any class of $\mathcal{P}$ is at most  $\epsilon$ smaller than the size of any partial class of $\mathcal{V}$,
  \item  after removing any integral $V_i$, the size of the resulting set is at most $\epsilon$ larger than the size of the integral part of any other class of $\mathcal{P}$.
\end{itemize}

\begin{proposition}\label{lemma for Q}
Let $\mathcal{V}=(V_1,\ldots,V_r)$ with $|V_i|=n_i$ and $n_1\geq \ldots\geq n_r$, and let $\mathcal{P}=(P_1,\ldots,P_{t-1})$ be  $\epsilon n_{t-1}$-stable to $\mathcal{V}$, where $0<\epsilon<1$ is  a small constant and $n_r$ is sufficiently large.
Suppose that each class of $\mathcal{P}\wedge \mathcal{V}$ is of size at least $\sqrt{\epsilon} n_{t-1}$.
If a vertex $v \in P_\alpha\cap V_\beta$ is adjacent to at least  $d_{K_{n_1,\ldots,n_r}[\mathcal{P}]}(v)-\epsilon n_{t-1} $ vertices of $\mathcal{V} \setminus V_\beta$, then $v$ is adjacent to at least $  (\sqrt{\epsilon}/4) n_{t-1}$ vertices of each of $t-2$ integral classes of $\mathcal{V}$ in different parts of $\mathcal{P}$.
\end{proposition}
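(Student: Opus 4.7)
The idea is a deficit argument. For $j\in[t-1]$ set $N_j:=|N(v)\cap(P_j\setminus V_\beta)|$ and $D_j:=|P_j\setminus V_\beta|-N_j$; since the host graph is $r$-partite, every neighbour of $v$ lies in $\mathcal{V}\setminus V_\beta$, so the hypothesis $d(v)\ge d_{K_{n_1,\ldots,n_r}[\mathcal{P}]}(v)-\epsilon n_{t-1}$ together with the identity $d_{K_{n_1,\ldots,n_r}[\mathcal{P}]}(v)=\sum_{j\ne\alpha}|P_j\setminus V_\beta|$ rearranges to the basic inequality
\[\sum_{j\ne\alpha}D_j\;\le\;N_\alpha+\epsilon n_{t-1}.\]

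Call $P_j$ \emph{good} if some integral $V_i\subseteq P_j$ with $V_i\ne V_\beta$ satisfies $|N(v)\cap V_i|\ge(\sqrt{\epsilon}/4)n_{t-1}$, and \emph{bad} otherwise; the target is to show that at most one part is bad. For any bad $P_j$ in which $V_\beta$ is not integral (automatic whenever $j\ne\alpha$, since $v\in V_\beta\cap P_\alpha$ forbids $V_\beta\subseteq P_j$ for $j\ne\alpha$), every integral $V_i\subseteq I_j$ satisfies $V_i\ne V_\beta$ and $|V_i|\ge\sqrt{\epsilon}n_{t-1}$ by the hypothesis on $\mathcal{P}\wedge\mathcal{V}$; each contributes more than $(3\sqrt{\epsilon}/4)n_{t-1}$ non-neighbours of $v$, so summing and using $|I_j|\ge q_j\sqrt{\epsilon}n_{t-1}$ (with $q_j$ the number of such $V_i$) yields $D_j\ge\tfrac{3}{4}|I_j|\ge\tfrac{3}{4}n_{t-1}$ via Proposition~\ref{larger than n_{t-1}}.

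A brief inspection of the four $\epsilon n_{t-1}$-stable conditions also produces the size bound $|P_\alpha\setminus V_\beta|\le|I_j|+\epsilon n_{t-1}$ for every $j\ne\alpha$: when $V_\beta$ is integral in $P_\alpha$ this is exactly the fourth condition, and when $V_\beta$ is partial in $P_\alpha$, 1-partiality forces $P_\alpha\setminus V_\beta=I_\alpha$ and the second condition applies. Assuming for contradiction that two distinct bad parts $P_{B_1},P_{B_2}$ both lie in $\{j\ne\alpha\}$ with $|I_{B_1}|\le|I_{B_2}|$, the basic inequality and $N_\alpha\le|P_\alpha\setminus V_\beta|$ give
\[\tfrac{3}{4}\bigl(|I_{B_1}|+|I_{B_2}|\bigr)\;\le\;D_{B_1}+D_{B_2}\;\le\;N_\alpha+\epsilon n_{t-1}\;\le\;|I_{B_1}|+2\epsilon n_{t-1},\]
which forces $|I_{B_2}|\le 4\epsilon n_{t-1}$, contradicting $|I_{B_2}|\ge n_{t-1}$ as soon as $\epsilon<1/4$.

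The main obstacle is the remaining scenario, where $P_\alpha$ is itself one of the two bad parts. There the trivial bound $N_\alpha\le|P_\alpha\setminus V_\beta|$ is too weak; instead I would extract from ``$P_\alpha$ bad'' the sharper estimate $N_\alpha\le\tfrac{1}{4}|I_\alpha\setminus V_\beta|+|P_\alpha^p|$ by pigeonholing the integral $V_i\subseteq P_\alpha\setminus V_\beta$ against $|V_i|\ge\sqrt{\epsilon}n_{t-1}$, and then cap $|P_\alpha^p|$ using the 1-partial condition together with the third stable condition, which bounds the unique partial $V_i$ intersecting $P_\alpha$ by the integral parts plus $\epsilon n_{t-1}$. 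Substituting these into the basic inequality together with $D_{B_1}\ge\tfrac{3}{4}|I_{B_1}|$ should again force $|I_{B_1}|$ to be of size $O(\epsilon n_{t-1})$, contradicting $|I_{B_1}|\ge n_{t-1}$, and the proof is complete.
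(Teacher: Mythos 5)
Your deficit framework (setting up $N_j$, $D_j$, and the basic inequality $\sum_{j\ne\alpha}D_j\le N_\alpha+\epsilon n_{t-1}$) is the right idea and matches the paper's degree-counting in spirit, and your handling of the case where both bad parts avoid $P_\alpha$ is sound. However there are two genuine problems.

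First, you have read the conclusion too strongly. You require a single integral class $V_i$ per part with $|N(v)\cap V_i|\ge(\sqrt{\epsilon}/4)n_{t-1}$; the proposition (as used in Claim~1 of Theorem~\ref{weak stability n case} and in the proof of Theorem~\ref{strong stability}, and as the paper's own proof makes explicit by working with the sets $X_i$) only asserts this for the \emph{integral part} $X_j=\bigcup\{V_i:V_i\subseteq P_j\}$. Your stronger reading is actually false. Take $t=3$, $r=6$, parts $P_1=V_2\cup(V_1\cap P_1)$, $P_2=V_3\cup V_4\cup V_5\cup V_6\cup(V_1\cap P_2)$ with $|V_1|=|V_2|=4m$, $|V_3|=\dots=|V_6|=m$, $|V_1\cap P_2|=4\sqrt{\epsilon}m$, and let $v\in V_2$ be joined to all of $V_1$ and to exactly $(\sqrt{\epsilon}-\epsilon)m$ vertices of each of $V_3,\dots,V_6$. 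Then $n_{t-1}=4m$, $\mathcal{P}$ is $\epsilon n_{t-1}$-stable, each class of $\mathcal{P}\wedge\mathcal{V}$ has size at least $\sqrt{\epsilon}n_{t-1}$, and $d(v)=d_{K[\mathcal{P}]}(v)-\epsilon n_{t-1}$; yet no integral $V_i$ receives $(\sqrt{\epsilon}/4)n_{t-1}=\sqrt{\epsilon}m$ neighbours of $v$. Of course $v$ has $\ge 3\sqrt{\epsilon}m$ neighbours inside $X_2$, so the intended statement about integral parts does hold.

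Second, the case you admit is the ``main obstacle'' (when $P_\alpha$ is itself one of the two bad parts) is left as a sketch that does not close as described. The cap you propose on $|P_\alpha^p|$ via the third stable condition only gives $|Y_\alpha|\le|V_j|\le|I_{B_1}|+\epsilon n_{t-1}$, which fed into $N_\alpha\le\frac14|I_\alpha\setminus V_\beta|+|Y_\alpha|$ and the basic inequality yields no contradiction (one only gets $\tfrac34|I_{B_1}|\le|I_{B_1}|+O(\epsilon n_{t-1})$). To make this case work — even under the corrected interpretation above — one must additionally use the hypothesis that every class of $\mathcal{P}\wedge\mathcal{V}$ has size at least $\sqrt{\epsilon}n_{t-1}$: the partial $V_j$ meeting $P_\alpha$ also meets some $P_\gamma$, $\gamma\neq\alpha$, in at least $\sqrt{\epsilon}n_{t-1}$ vertices, so $|Y_\alpha|\le|V_j|-\sqrt{\epsilon}n_{t-1}\le|I_{B_1}|-(\sqrt{\epsilon}-\epsilon)n_{t-1}$; combined with the integral-part reading this gives a $\sqrt{\epsilon}n_{t-1}$-order slack and hence a contradiction for $\epsilon$ small. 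The paper's proof implements exactly this: it splits on how $V_\beta$ sits in the two deficient parts and uses, in the ``$V_\beta\cap P_{i_1}=\emptyset$'' branch, the bound $|Y_{i_2}|\le|X_{i_1}|-(\sqrt{\epsilon}-\epsilon)n_{t-1}$, which is the step your sketch is missing. (A minor point: you quote $|I_j|\ge n_{t-1}$ from Proposition~\ref{larger than n_{t-1}}, but that proposition is for stable partitions; for an $\epsilon n_{t-1}$-stable partition the bound is $(1-\epsilon)n_{t-1}$, which still suffices.)
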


\begin{proof}
Let $X_{i}$  be the integral part of $\mathcal{P}_{i}$ for $i\in [t-1]$ and $Y_{i}$  be the partial part of $\mathcal{P}_{i}$.
Suppose that   $v \in P_\alpha\cap V_\beta$ has less than $  (\sqrt{\epsilon}/4) n_{t-1}$ neighbours in both of $X_{i_1}$ and $X_{i_2}$.

If $V_{\beta}$ is partial in both of $P_{i_1}$ and $P_{i_2}$, then $|P_{i_1}\setminus V_{\beta}|\geq |P_{\alpha}\setminus V_{\beta}|-\epsilon n_{t-1}$ and $|P_{i_2}\setminus V_{\beta}|\geq |P_{\alpha}\setminus V_{\beta}|-\epsilon n_{t-1}$ due to $\mathcal{P}$ is $\epsilon n_{t-1}$-stable to $\mathcal{V}$.
Similar as the proof of Proposition~\ref{larger than n_{t-1}}, the size of each $X_{i}$ is at least $(1-\epsilon) n_{t-1}$.
Therefore, $v$ is adjacent to at most
$$\left|\bigcup\nolimits_{i\neq i_1,i_2} (P_i \setminus  V_\beta )\right|+ (\sqrt{\epsilon}/2)  n_{t-1}< \left|\bigcup\nolimits_{i\neq \alpha} (P_i \setminus  V_\beta )\right|- \epsilon  n_{t-1}$$
vertices of $\mathcal{V} \setminus V_\beta$, a contradiction.

If $V_{\beta}$ is integral in one of $P_{i_1},P_{i_2}$, without loss of generality, say $P_{i_1}$, then $v$ is adjacent to at most
$$\left|\bigcup\nolimits_{i\neq i_1,i_2} (P_i \setminus  V_\beta )\right|- (\sqrt{\epsilon}/2)  n_{t-1}< \left|\bigcup\nolimits_{i\neq \alpha} (P_i \setminus  V_\beta )\right|- \epsilon  n_{t-1}$$
vertices of $\mathcal{V} \setminus V_\beta$, a contradiction.

Now, without loss of generality, we may assume that $V_{\beta}$ is not partial in $P_{i_1}$ and $V_{\beta}$ is not integral in both of $P_{i_1},P_{i_2}$.
Let $V_{\beta} \cap P_{i_1}=\emptyset$, i.e., $|X_{i_1}|+ |Y_{i_1}|= |P_{i_1} \setminus V_{\beta}|$.
Since $\mathcal{P}$ is $\epsilon n_{t-1}$-stable to $\mathcal{V}$ and  each class of $\mathcal{P}\wedge \mathcal{V}$ is of size at least $\sqrt{\epsilon} n_{t-1}$, we have $|Y_{i_2}|\leq |X_{i_1} |-(\sqrt{\epsilon}-\epsilon) n_{t-1}$.
Hence $|Y_{i_1}|+|Y_{i_2}|+(\sqrt{\epsilon}/2)  n_{t-1} < |P_{i_1} \setminus  V_\beta|-(\sqrt{\epsilon}/4)  n_{t-1}$ ($V_\beta$ is not integral in $P_{i_2}$), that is $v$ is adjacent to at most
$$\left|\bigcup\nolimits_{i\neq i_1,i_2} (P_i \setminus  V_\beta )\right|+ |Y_{i_1}|+|Y_{i_2}|+(\sqrt{\epsilon}/2)  n_{t-1}< \left|\bigcup\nolimits_{i=1}\nolimits^{t-1} (P_i \setminus  V_\beta )\right|- \epsilon  n_{t-1},$$
vertices of $\mathcal{V} \setminus V_\beta$, a contradiction.
We finish the proof of Proposition \ref{lemma for Q}.
\end{proof}

\begin{lemma}\label{remove vertices to make it stable}
If the partition $\mathcal{P}=(P_1,\ldots,P_{t-1})$ is $\epsilon$-stable to $\mathcal{V}=(V_1,\ldots,V_r)$ and each class of $\mathcal{P}\wedge \mathcal{V}$ is of size at least $10tr\epsilon$ then we can remove a set $X$ with $|X|\leq 4tr\epsilon $ to ensure $\mathcal{P}_X$ is stable to $\mathcal{V}_X$.
\end{lemma}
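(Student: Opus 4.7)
Set the notation $a_j = |\bigcup\{V_i : V_i \subseteq P_j\}|$ for the integral-part size of $P_j$ and $p_j = |P_j| - a_j$ for its partial part. Deleting vertices never creates a new nonempty intersection $V_i \cap P_j$, so the $1$-partial property of $\mathcal{P}$ is automatically inherited by $\mathcal{P}_X$; the task is therefore to close the three inequality bullets of stable, each weakened by $\epsilon$ in the definition of $\epsilon$-stable, via vertex removals whose total count is bounded by $4tr\epsilon$.

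Set $a := \min\!\bigl(\{a_j : P_j \text{ partial}\}\cup\{|P_k|:P_k \text{ integral}\}\bigr)$; the $\epsilon$-stability of $\mathcal{P}$ places each such $a_j$ and $|P_k|$ inside $[a,a+\epsilon]$. For each partial class $P_j$ I would delete at most $\epsilon$ vertices from a single integral $V_i\subseteq P_j$ until $a_j$ becomes exactly $a$. This handles the second and third bullets of stable simultaneously (the partial integral parts are equalized at $a$ and every integral class of $\mathcal{P}$ has size $\geq a$) and costs at most $(t-1)\epsilon$ vertices in total.

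Next, for every $V_l$ that is partial in $\mathcal{V}$, $\epsilon$-stability gives $|V_l| - a \leq \epsilon$; because $V_l$ meets at least two classes of $\mathcal{P}$, the excess can be removed from a partial intersection $V_l \cap P_j$, leaving every $a_j$ untouched. Summed over the at most $r$ partial $V_l$'s this contributes $\leq r\epsilon$ more vertices. To enforce the last bullet, for each integral $V_i \subseteq P_j$ and each $k\neq j$ the gap $|P_j|-|V_i|-a_k$ is at most $\epsilon$ by $\epsilon$-stability; I close each positive gap by removing up to $\epsilon$ vertices from $P_j$ avoiding $V_i$ itself, drawing from the partial part of $P_j$ when possible or from another integral $V_{i'}\subseteq P_j$ (such a $V_{i'}$ must exist when the gap is positive, for otherwise $P_j = V_i$ and the constraint is vacuous). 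Summing over $\leq r$ integral $V_i$'s and $\leq t-2$ choices of $k$ costs $\leq r(t-2)\epsilon$ additional vertices, so $|X| \leq (t-1)\epsilon + r\epsilon + r(t-2)\epsilon \leq 4tr\epsilon$. The hypothesis $|V_i\cap P_j|\geq 10tr\epsilon$ ensures no class of $\mathcal{P}\wedge\mathcal{V}$ is emptied by $X$, so the corrected sizes accurately describe $\mathcal{P}_X\wedge\mathcal{V}_X$.

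The main obstacle is keeping the last step from undoing the earlier ones: trimming an integral $V_{i'}\subseteq P_j$ to fix the fourth bullet shrinks $a_j$ and could push it below $a$ or below some adjusted $|V_l|$. I would manage this by performing the four steps in the order above and preferring to draw from partial parts whenever a choice exists; the $10tr\epsilon$ cushion on every $V_i \cap P_j$ absorbs the tiny cascading adjustments and leaves room to keep the invariants from the earlier steps intact.
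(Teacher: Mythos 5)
Your overall decomposition is the right one and is essentially the paper's: inherit $1$-partiality for free, equalize the integral parts of the partial classes down to the minimum $a$, trim the partial $V_l$'s to be $\le a$, and then trim to enforce the fourth bullet. However, there is a genuine gap in how you handle the fourth bullet, and it is exactly the one you flag in your last paragraph.

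First, a small misstatement: $\epsilon$-stability forces $a_j\leq |P_k|+\epsilon$ for every integral class $P_k$, hence $|P_k|\geq a$, but it gives \emph{no} upper bound on $|P_k|$. So your claim that all $|P_k|$ lie in $[a,a+\epsilon]$ is false; only the partial-class integral parts $a_j$ lie in $[a,a+\epsilon]$. This does not break the argument (you only need $|P_k|\geq a$) but it signals that you are not tracking which quantities are actually controlled.

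The real gap is in the fourth step. When $P_j$ is an \emph{integral} class, any removal from $P_j\setminus V_i$ must come from another integral $V_{i'}\subseteq P_j$, and this decreases $a_j=|P_j|$. You notice that this ``could push it below $a$ or below some adjusted $|V_l|$,'' but your resolution is a single sentence asserting that the $10tr\epsilon$ cushion ``absorbs the tiny cascading adjustments.'' That is not a proof, and the cushion on $|V_i\cap P_j|$ does not, by itself, prevent $|P_j|$ from dropping below $a$. The missing observation — which the paper makes explicit by trimming $P_i$ only when $|P_i|\geq 4tr\epsilon + m$ — is that the fourth-bullet constraint $|P_j|-|V_i|\leq a$ is \emph{automatically satisfied} whenever $|P_j|\leq a+10tr\epsilon$, since $|V_i|\geq 10tr\epsilon$. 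So you only ever need to trim a $P_j$ that already sits $10tr\epsilon$ above $a$, and after removing $O(\epsilon)$ vertices it remains strictly above $a$; moreover the resulting $a_j=|P_j|$ is still $>a$, so no constraint of the form $|P_{j'}|-|V_{i'}|\leq a_j$ with $j'\neq j$ is newly violated, because you have already arranged $|P_{j'}|-|V_{i'}|\leq a$. Without this threshold argument, your proof does not close. A secondary imprecision: removing vertices from $V_{i'}$ leaves the quantity $|P_j|-|V_{i'}|$ unchanged, so ``closing each positive gap by removing $\epsilon$ vertices from $P_j$ avoiding $V_i$'' does not fix the constraint indexed by $i'$; the removals must be coordinated across the $V_{i'}$'s (the paper sidesteps this by deleting $3\epsilon$ vertices from \emph{every} integral $V_{i'}\subseteq P_j$ simultaneously).
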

\begin{proof}
Let $P_{i\leq m}$ be the partial class and $P_{i>m}$ be the integral class of $\mathcal{P}$.
Let $X_i$ denote the integral part of $P_i$ and $Y_i$ be the partial part of $P_i$.

Let  $m=\min\{|X_{i\leq m}|,|P_{i>m}|\}$.
Since $\mathcal{P}_X$ is $\epsilon $-stable to $\mathcal{V}$, we only need to remove at most $2\epsilon$ vertices from each $X_{i\leq m}$ to make all of them with size $m$.
Moreover, we can remove at most $4\epsilon$ vertices from each $Y_{i\leq m}$ to make sure that
\begin{itemize}
\item the size of each partial class of $\mathcal{V}$ is no more than $m$ and
\item after removing any integral $V_j$ in $P_{i\leq m}$ the size of the resulting set is no more than $m$.
\end{itemize}

Let $i>m$.
If $|P_i|\geq 4tr\epsilon+m$, then we remove $3\epsilon$ vertices from every integral $V_j$ of $P_{i}$, otherwise we do nothing.
Denote the obtained class by $\widetilde{P}_i$.
Note that each class of $\mathcal{P}\wedge \mathcal{V}$ is of size at least $10tr\epsilon$.
Clearly, after deleting all vertices of any $V_i$ in $\widetilde{P}_i$, the resulting set is of size at most $m$.
Therefore, we only need to remove $X$ with $|X|\leq 4tr\epsilon$ to ensure $\mathcal{P}_X$ be stable to $ \mathcal{V}_X$.
We complete the proof of Lemma \ref{remove vertices to make it stable}.
\end{proof}

\section{Properties of extremal graphs in multi-partite graphs}\label{section 3}

First we present some properties of the extremal graphs for $K_t$ in multi-partite graphs.

\begin{proposition}\label{pro:f(n,t)}
Let $n_1\geq\ldots \geq n_r$.
Then
$f(n_1,\ldots,n_r,1,t)\geq  f(n_1,\ldots,n_r,1,t-1)+n_{t-1}^2$.
\end{proposition}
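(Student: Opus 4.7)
The plan is to exhibit a $(t-1)$-partition of $[r]$ whose associated sum exceeds the optimum of the $(t-2)$-partition problem by at least $n_{t-1}^2$. I would start from any optimal $(t-2)$-partition $\mathcal{Q}=(Q_1,\ldots,Q_{t-2})$ of $[r]$ realizing $f(n_1,\ldots,n_r,1,t-1)$ and produce the desired $(t-1)$-partition by splitting a carefully chosen class of $\mathcal{Q}$.

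First I would record the key identity that measures the effect of splitting a class. If $Q_k$ is replaced by two non-empty disjoint parts $A,B$ with $A\cup B = Q_k$, then for every other class $Q_l$ we have $n_A\cdot n_{Q_l}+n_B\cdot n_{Q_l}=n_{Q_k}\cdot n_{Q_l}$, so the cross-products involving the other classes are unchanged, and the sum
$$\sum_{I\neq I'\in \mathcal{P}} n_I n_{I'}$$
increases by exactly $n_A\cdot n_B$. Thus, it suffices to find some class $Q_k$ and some split $Q_k=A\cup B$ with $n_A n_B \geq n_{t-1}^2$.

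For that, I would use a one-line pigeonhole. Each of the $t-1$ indices $1,2,\ldots,t-1$ satisfies $n_i\geq n_{t-1}$. These $t-1$ indices are distributed among the $t-2$ classes of $\mathcal{Q}$, so at least one class, say $Q_k$, contains two of them, call them $i,j\in[t-1]$. Then take $A=\{i\}$ and $B=Q_k\setminus\{i\}$ (which contains $j$, hence is non-empty). This yields the required $(t-1)$-partition $\mathcal{P}$, with
$$n_A\cdot n_B = n_i\cdot n_{Q_k\setminus\{i\}} \geq n_i\cdot n_j \geq n_{t-1}\cdot n_{t-1} = n_{t-1}^2,$$
and therefore $f(n_1,\ldots,n_r,1,t)\geq e(K_{n_1,\ldots,n_r}[\mathcal{P}]) \geq f(n_1,\ldots,n_r,1,t-1)+n_{t-1}^2$.

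There is essentially no obstacle: the whole argument rests on the trivial algebraic identity for the effect of a split together with the pigeonhole observation that $t-1$ "large" indices cannot be spread among $t-2$ classes without collision. The only thing worth checking in passing is that the implicit hypothesis $r\geq t-1$ (needed to define the $(t-1)$-partition side at all) is consistent with the paper's running assumption $r\geq t$, so the pigeonhole step is always applicable.
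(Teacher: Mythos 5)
Your proposal is correct and follows essentially the same route as the paper: start from an optimal $(t-2)$-partition, apply pigeonhole to the indices $1,\ldots,t-1$ to find a class containing two of them, split that class by separating off a singleton, and lower-bound the resulting gain in cross-products by $n_i n_j \geq n_{t-1}^2$. Your write-up is slightly more explicit about the algebraic identity governing the effect of a split, but the idea and computation are identical.
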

\begin{proof}
Let $\mathcal{P}=(P_1,\ldots,P_{t-2})$ be a $(t-2)$-partition of $[r]$ which attains $f(n_1,\ldots,n_r,1,t-1)$.
Thus, by the Pigeonhole Principle, one part of $\mathcal{P}$, say $P_s$, contains at least two integers $i,j\in [t-1]$.
Hence $\mathcal{P}^\ast=(P_1,\ldots,\{i\},P_s\setminus\{i\},\ldots,P_{t-2})$ is a $(t-1)$-partition of $[r]$.
Note that $n_i\geq n_{t-1}$ and $n_j\geq n_{t-1}$.
Therefore, we have $f(n_1,\ldots,n_r,1,t-1)+n_{t-1}^2\leq f(n_1,\ldots,n_r,1,t-1)+n_i n_j \leq f(n_1,\ldots,n_r,1,t)$.
The proof of this proposition is complete.
\end{proof}

Now we present some properties of the extremal graphs for $K^s_t$.

\begin{proposition}\label{neighbour copy keeps free}
Given a  graph $F$ with $\chi(F) \geq 3$.
Let $G$ be an $r$-partite graph with parts $\mathcal{V}=(V_1,\ldots,V_r)$.
Suppose that $G$ is $F$-free with two vertex-disjoint independent sets $A,B$ with $|B| \geq |F|$.
Let $X=N_G[B]\setminus A$.
Then $G_{A \rightarrow X}$ is also $F$-free.
In particular, if $G$ is $K_t$-free, then $G_{V_i}$ is still $K_t$-free.
\end{proposition}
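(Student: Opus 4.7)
The plan is to prove the main claim by contradiction via a swap-and-replace argument, then handle the ``in particular'' case by a separate short direct argument (since $|B|\geq|F|$ fails when $B=\{u\}$).

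Suppose $G_{A\to X}$ contains a copy $F'$ of $F$. Let $A'=V(F')\cap A$ and $U=V(F')\setminus A$. I would first record three structural observations about $F'$ that follow directly from the construction $G_{A\to X}$: (i) $A'$ is independent in $F'$, since $A$ is independent in $G$ and no edges inside $A$ are added; (ii) every $F'$-neighbor of every vertex of $A'$ lies in $X$, because the only edges incident with $A$ in $G_{A\to X}$ are the new edges going from $A$ into $X$; (iii) the edges of $F'$ lying inside $U$ are also edges of $G$, since the operation leaves edges disjoint from $A$ untouched.

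Next I would pick an injection $\phi\colon A'\to B\setminus U$; such a $\phi$ exists because $|B|\geq|F|\geq|A'|+|U|$ forces $|B\setminus U|\geq|A'|$. The claim is that the vertex set $\phi(A')\cup U$ carries a copy of $F$ in $G$, where the isomorphism with $F'$ sends $a\mapsto\phi(a)$ and fixes $U$ pointwise. Edges inside $U$ are inherited from $G$ by (iii); edges inside $A'$ do not exist by (i), and $\phi(A')\subseteq B$ is independent in $G$, so there is nothing to check there; finally, for every edge $au$ of $F'$ with $a\in A'$ and $u\in U$, observation (ii) gives $u\in X\subseteq N_G[B]$, so $\phi(a)u\in E(G)$ because every vertex of $B$ is adjacent to every vertex of $X$ in $G$. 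This produces a copy of $F$ in $G$, contradicting the $F$-freeness of $G$.

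For the ``in particular'' clause, let $u$ be a vertex of maximum degree in $V_i$ and suppose for contradiction that $G_{V_i}$ contains a $K_t$ with vertex set $V'$. Since $V_i$ is independent in $G$ and the operation adds no edges inside $V_i$, we have $|V'\cap V_i|\leq 1$ (as $K_t$ contains no two-vertex independent set). If $V'\cap V_i=\emptyset$, then all edges of $V'$ are untouched by the operation, so $V'$ already induces a $K_t$ in $G$. Otherwise $V'\cap V_i=\{v\}$ for some $v$; all $t-1$ neighbors of $v$ in this $K_t$ lie in $N_G(u)$ by construction of $G_{V_i}$, so $\{u\}\cup(V'\setminus\{v\})$ spans a $K_t$ in $G$. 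Either case contradicts $G$ being $K_t$-free. The main conceptual step is the swap in the general argument; the only routine point to verify carefully is that $\phi$ can avoid $U$, which is just the counting $|B|\geq|F|=|A'|+|U|$, so there is no serious obstacle.
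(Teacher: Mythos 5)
Your proof is correct and follows the same basic idea as the paper's: locate the vertices $A'=V(F')\cap A$ of the putative copy of $F$, observe that every $F'$-neighbour of $A'$ must lie in $X\subseteq N_G[B]$, and then replace $A'$ by an equinumerous set of vertices of $B$ to recover a copy of $F$ in $G$. However, your write-up is noticeably more careful than the paper's in two respects. First, the paper asserts ``$G[X]$ contains a copy of $F-A$,'' which tacitly claims $V(F')\setminus A\subseteq X$; that is not literally forced by the construction, since vertices of $F'$ with no $F'$-neighbour in $A$ could lie anywhere. You sidestep this by splitting $V(F')$ into $A'$ and $U$, only using that the $F'$-neighbours of $A'$ lie in $X$ (your observation (ii)), that edges inside $U$ persist (your (iii)), and that $\phi(A')$ can be chosen inside $B\setminus U$. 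Second, you are right that the ``in particular'' clause does not follow as a literal special case of the main claim, because in $G_{V_i}=G_{V_i\to N_G(u)}$ the role of $B$ would have to be played by $\{u\}\subseteq V_i=A$, violating both the disjointness requirement and $|B|\ge|F|$; your short direct argument for $G_{V_i}$ (bounding $|V'\cap V_i|\le 1$ and swapping the single vertex to $u$) is the clean way to handle it and is the same swap idea in miniature. In short: same approach as the paper, but your version plugs two small gaps in the paper's presentation.
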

\begin{proof}
Suppose for a contradiction that  $G_{A \rightarrow X}$ contains a copy of $F$.
Clearly, $V(F)\cap A$ is not empty, as otherwise $G$ contains a copy of $F$, a contradiction.
We can see that $G[X]$ contains a copy of $F-A$, implying $G[X\cup B]$ contains a copy of $F$ (note that $A$ is an independent set in $G_{A \rightarrow X}$), a contradiction ($G$ is $F$-free).
The proof of this proposition is complete.
\end{proof}	

Now we are ready to prove Theorem~\ref{extremal (t-1)-partition}.

\medskip

\noindent {\bf Proof of Theorem~\ref{extremal (t-1)-partition}.}
Let  $G=K_{n_1,\ldots,n_r}[\mathcal{P}]$ and $\mathcal{P}=(P_1,\ldots,P_{t-1})$ be an extremal $(t-1)$-partition of $V(G)$ (See Figure 2).
We denote $X_{i,j}=V_i\cap P_j$ for $i\in [r],j\in [k]$.
We only prove the ``only if'' part of Theorem~\ref{extremal (t-1)-partition} since the ``if'' part is trivial.

First, if $V_i$ is non-empty in $P_j$, then in any $P_{j^\prime}$ with $j\neq j^\prime$ we have $|P_j\setminus V_i|\leq |P_{j^\prime}\setminus V_i|.$
Otherwise, $G_{X_{i,j}\rightarrow N_G[X_{i,j^\prime}]}$ has more edges than $G$, a contradiction to the maximality of $G$.
Thus if $V_i$ is partial in $P_j,P_{j^\prime}$ with $j\neq j^\prime$, then $|P_j\setminus V_i|=|P_{j^\prime}\setminus V_i|,$ and if $V_i$ is integral in $P_j$ then for every distinct $P_{j^\prime}$, we have
\begin{equation}\label{eq 0}
|P_j\setminus V_i|\leq |P_{j^\prime}|.
\end{equation}
	
	Assume there exists a part $P_{j_1}$ such that two sets $V_{i_1},V_{i_2}$ with $i_1\neq i_2$ are both partial in it.
	Let $V_{i_1}$ be partial in $P_{j_2}$ with $j_2\neq j_1$ and  $V_{i_2}$ be partial in $P_{j_3}$ with $j_3\neq j_1$ (it is possible that $j_2=j_3$).
	From the last paragraph, we have $|P_{j_1}\setminus V_{i_1}|=|P_{j_2}\setminus V_{i_1}|$ and hence $G^1=G_{X_{i_1,j_2}\rightarrow N_G[X_{i_1,j_1}]}$ has same number of edges with $G$.
    Note that $|P_{j_1}\setminus V_{i_2}|=|P_{j_3}\setminus V_{i_2}|$.
    Thus $G^1_{X_{i_2,j_1}\rightarrow N_{G^1}[X_{i_2,j_3}]}$ has more edges than $G^1$, and hence has more edges than $G$, a contradiction.
	Therefore, $\mathcal{P}$ is $1$-partial.
	
	Now, we may suppose  that $V_{i_1}$ is partial in $P_{j_1},P_{j_2}$ and $V_{i_2}$ is partial in $P_{j_3},P_{j_4}$ with $i_1\neq i_2$ and $j_1<j_2<j_3<j_4$.
    We have already proved that  $|P_{j_1}\setminus V_{i_1}|=|P_{j_2}\setminus V_{i_1}|$ and  $|P_{j_3}\setminus V_{i_2}|=|P_{j_4}\setminus V_{i_2}|$.
    It is enough to show that $|P_{j_1}\setminus V_{i_1}|=|P_{j_3}\setminus V_{i_2}|$.
    Let $G^2=G_{X_{i_1,j_2}\rightarrow N_G[X_{i_1,j_1}]}$.
    It is easy to see that $G^2$ has same number of edges with $G$.
    Let $G^3=G^2_{X_{i_2,j_3}\rightarrow N_{G^2}[P_{j_2}\setminus V_{i_1}]}$.
     Note that $G^2$ and $G^3$ are $K_t$-free.
    The maximality of $G$ implies that $G^3$ has no more number of edges than $G$, hence we have $|P_{j_3}\setminus V_{i_2}|\leq|P_{j_1}\setminus V_{i_1}|$.
    By symmetry, we also have $|P_{j_3}\setminus V_{i_2}|\geq|P_{j_1}\setminus V_{i_1}|$.
    Thus $|P_{j_3}\setminus V_{i_2}|=|P_{j_1}\setminus V_{i_1}|=m$.
    Therefore, from (\ref{eq 0}), we have $m \leq |P_i|$ for each integral $P_i$.

    Let $V_i$ be partial in $P_j$ and $P_{j^\prime}$.
    Thus $P_j\setminus V_i$ and $P_{j^\prime}\setminus V_i$ are integral part with  $|P_j\setminus V_i|=|P_{j^\prime}\setminus V_i|=m$.
    In graph $G$, we adjacent $V_i$ to all other vertices and delete all the edges between $P_j\setminus V_i$ and $P_{j^\prime}\setminus V_i$ and denote the new graph by $H$.
    Note that $H$ is still a $(t-1)$-partite graph.
    Thus $e(H)=e(G)- |P_j\setminus V_i|^2  +  |V_i| |P_{j}\setminus V_i|\leq e(G)$, implying $|V_i|\leq |P_j\setminus V_i|=m$.

    Let $V_i$ be integral in $P_\ell$ and be $V_j$ partial in $P_s$ and $P_{s^\prime}$.
    Let $G^1=G_{X_{j,s}\to N_G[X_{j,s^\prime}]}$.
    Since $|P_{s^\prime}\setminus V_j|=|P_s\setminus V_j|$, $G^1$ is still the extremal $K_t$-free graph.
     By \eqref{eq 0} we have $|P_\ell\setminus V_i|\leq |P_s\setminus V_j|=m$.
     If there is no partial class in $\mathcal{P}$, then $|P_\ell\setminus V_i| \leq  m$, as otherwise there is a $(t-1)$-partition with more edges.
    Above all, we confirmed that $\mathcal{P}$ is stable to $\mathcal{V}$.

   Since the integral part of partial class equals to each other, it is easy to check that $I(\mathcal{P})$ is still an extremal partition.
   The proof of Theorem~\ref{extremal (t-1)-partition} is complete.\hfill$\square$ \medskip

\begin{center}
\begin{tikzpicture}

\draw (0,0) ellipse  (0.5 and 2);
\draw node at (0,2.4) {$V_3$};
\draw[red] (0,-2.5) ellipse  (0.5 and 0.4) node {$V_{1}$};
\draw node at (0,-4) {$P_1$};

\draw (1.5,0) ellipse  (0.5 and 2);
\draw node at (1.5,2.4) {$V_4$};
\draw[red] (1.5,-2.9) ellipse (0.5 and 0.6) node {$V_{1}$};
\draw node at (1.5,-4) {$P_2$};

\draw (3,0) ellipse  (0.5 and 2);
\draw node at (3,2.4) {$V_5$};
\draw[red] (3,-2.7) ellipse  (0.5 and 0.5) node {$V_{1}$};
\draw node at (3,-4) {$P_3$};

\draw (4.5,0) ellipse  (0.5 and 2);
\draw node at (4.5,2.4) {$V_6$};
\draw[blue] (4.5,-2.5) ellipse  (0.5 and 0.3) node {$V_{2}$};
\draw node at (4.5,-4) {$P_4$};

\draw (6,0) ellipse  (0.5 and 2);
\draw node at (6,2.4) {$V_7$};
\draw[blue] (6,-2.8) ellipse  (0.5 and 0.7) node {$V_{2}$};
\draw node at (6,-4) {$P_5$};

\draw (7.5,0) ellipse  (0.5 and 2.5);
\draw node at (7.5,2.7) {$V_8$};
\draw node at (7.5,-4) {$P_6$};

\draw node at (3.5,-5) {Figure 2. an extremal partition with $|V_1|,\ldots,|V_8|=1.5m,m,2m,\ldots,2m,2.5m$};

\end{tikzpicture}
\end{center}

\begin{proposition}\label{remove big set}
	Let $K=K_{n_1,\ldots,n_r}$ with parts $V_1,\ldots, V_{r}$ of sizes $n_1\geq \ldots\geq n_r$.
Given $L\geq r$ and let  $n_{t-1}$ be sufficiently large.
	Let $\tau=\tau(n_1,\ldots,n_r,t,L)$. If $\tau \neq 0$, then for each $0\leq s\leq \tau$, we have
$$\emph{ex}(n_1,\ldots,n_r,K_t)=\emph{ex}(n_{s+1},\ldots,n_r,K_{t-s})+\prod_{1\leq i<j\leq s}n_in_j+\sum_{i=1}^s n_i\sum_{i=s+1}^rn_i.$$
Moreover, each extremal $(t-1)$-partition of $K$ contains $V_s$ for each $1\leq s\leq \tau$.
\end{proposition}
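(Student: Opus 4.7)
I would argue by induction on $s$, with $s=0$ as the (trivial) base case. For the inductive step, assuming the formula for $s-1$---and, more importantly, that every extremal $(t-1)$-partition $\mathcal{P}$ has $V_1,\ldots,V_{s-1}$ each as a part of $\mathcal{P}$---the $t-s+1$ remaining parts restrict to an extremal $(t-s+1)$-partition of $(V_s,\ldots,V_r)$ for $K_{t-s+1}$. A small check shows that the $L$-balance structure of $(n_s,\ldots,n_r)$ still has the analogous $\tau'\ge 1$ whenever $s\le\tau$ (the gap between consecutive balance families $B_k,B_{k+1}$ in the original survives the removal of the leading elements), so the inductive step reduces to proving the base case $s=1$ in full generality.

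For the base case, fix an extremal $(t-1)$-partition $\mathcal{P}$; by Theorem~\ref{extremal (t-1)-partition}, $\mathcal{P}$ is stable to $\mathcal{V}$, and $|B_1|\le t-2$ since $\tau\ge 1$. I first claim $V_1$ is integral in $\mathcal{P}$: if it were partial, stability condition~3 would force every integral part of $\mathcal{P}$ to have size at least $n_1$, so the total integral vertex count is at least $(t-1)n_1$; since $V_1$ contributes $0$ to integral parts, this total is also at most $n_2+\cdots+n_r$. But the $L$-balance bound (with $L\ge r$)
\[n_2+\cdots+n_r \le (|B_1|-1)n_1 + (r-|B_1|)n_1/L^4 \le (t-3)n_1 + n_1/r^3\]
contradicts the lower bound $(t-1)n_1$.

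With $V_1\subseteq P_j$ integral, I next show $P_j=V_1$. If $V_b\subseteq P_j$ is integral for some $b\neq 1$, the standard swap---moving $V_b$ from $P_j$ to any $P_{j'}$ changes $e(K_{n_1,\ldots,n_r}[\mathcal{P}])$ by $n_b(|P_j|-n_b-|P_{j'}|)$---together with the extremality of $\mathcal{P}$ forces $|P_{j'}|\ge|P_j|-n_b\ge n_1$ for every $j'\neq j$; summing sizes over $j'$ yields $(t-2)n_1+n_b\le n_2+\cdots+n_r$, contradicting the $L$-balance bound above. If instead $P_j=V_1\cup(V_a\cap P_j)$ for some partial $V_a$, then stability conditions~2 and~4 jointly force every class's integral part to have size $\ge n_1$, and the analogous inequality $(t-2)n_1+n_a\le n_2+\cdots+n_r$ again contradicts the bound.

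Once $V_1$ is alone in $P_j$, the remaining $t-2$ parts form an extremal $(t-2)$-partition of $V\setminus V_1$ for $K_{t-1}$, giving
\[\mathrm{ex}(n_1,\ldots,n_r,K_t)=\mathrm{ex}(n_2,\ldots,n_r,K_{t-1})+n_1(n_2+\cdots+n_r),\]
the $s=1$ case of the proposition; iterating yields the general formula. I expect the main obstacle to be in the partial-class analysis---specifically, arguing that conditions~2 and~4 of stability together uniformly lower-bound integral-part sizes in the presence of a partial $V_a$ in $V_1$'s part---together with verifying that the $L$-balance gap defining $\tau$ transfers cleanly to the reduced family in the induction, which requires a short bookkeeping argument about how the balance families restrict to $(n_s,\ldots,n_r)$.
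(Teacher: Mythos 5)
Your argument is correct and uses the same two ingredients as the paper---Theorem~\ref{extremal (t-1)-partition} (stability of the extremal partition) and the $L$-balance size gap---but organizes them differently, and in places more carefully. The paper handles all $1\leq s\leq\tau$ simultaneously: it first claims $|P_{t-1}|\leq\sum_{i>\tau}n_i\leq n_\tau/r$, then uses stability condition~3 to rule out $V_s$ being partial and condition~4 to rule out $V_s$ sharing its class. That one-line upper bound on $|P_{t-1}|$ is actually the crux and needs justification (one must first rule out the large $V_i$ being partial, a small chicken-and-egg issue the paper leaves implicit); your direct ``total integral mass $\geq (t-1)n_1$ versus $\leq n_2+\cdots+n_r \leq (t-3)n_1 + n_1/r^3$'' computation avoids it. Your Case~A swap is equivalent to re-deriving stability condition~4 / inequality~(\ref{eq 0}) of Theorem~\ref{extremal (t-1)-partition}, so you could quote it instead. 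The inductive reduction you sketch is sound: once $V_1,\ldots,V_{s-1}$ are verified as parts, the remaining $t-s$ parts are an extremal $(t-s)$-partition of $(V_s,\ldots,V_r)$ for $K_{t-s+1}$, and the balance-class bookkeeping (removing the leading element either peels a singleton $B_1$ or shrinks $B_1$ by one, decreasing the cumulative count $\tau$ by exactly one) does go through, so $\tau'\geq 1$ whenever $s\leq\tau$. Overall: correct, and slightly more self-contained than the paper's own sketch.
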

\begin{proof}
Take an extremal $(t-1)$-partition $\mathcal{P}=(P_1,\ldots, P_{t-1})$ of $K$ with $|P_1| \geq \ldots \geq |P_{t-1}|$.
It is sufficiently to show that $P_i=V_i$ for $i=1,\ldots,\tau$.
Theorem~\ref{extremal number 1} implies that $H=K_{n_1,\ldots,n_r}[\mathcal{P}]$ is an extremal graph of $K_t$.
Since $0\leq s\leq \tau<t-1$, we have $|P_{t-1}|\leq \sum_{i=\tau +1}^{r} n_i \leq   n_{\tau}/r.$
Hence $V_s$ is integral to $\mathcal{P}$, as otherwise there is a partial class of  $\mathcal{V}$ with size larger than $|P_{t-1}|$, a contradiction to Theorem~\ref{extremal (t-1)-partition}.
If $P_j$ contains $V_s$ and some other vertices, then the size of the integral part of $P_j$ is larger than $|P_{t-1}|$, which is also a contradiction to Theorem~\ref{extremal (t-1)-partition}.
Therefore, we must have $P_i=V_i$ for $i=1,\ldots,\tau$.
The proof of Proposition \ref{remove big set} is complete.
\end{proof}

\section{Proof of Theorem \ref{weak stability} and \ref{strong stability}}\label{section 4}

\noindent In order to prove Theorem~\ref{weak stability}, we first prove the following stability result for $K_t$-free graphs.

\begin{theorem}\label{weak stability n case}
Let $G$ be a $K_t$-free $r$-partite graph with parts $\mathcal{V}=(V_1,\ldots,V_r)$ of size $n_1\geq \ldots\geq n_r$.
For any $0<\epsilon<1 $, there exists $\delta>0$ depending on $t$, $r$ and $\epsilon$ such that if $n_{t-1}\geq 1/\delta$ and $e(G)\geq f(n_1,\ldots,n_r,1,t)-\delta n_{t-1}^2$, then $G$ has an $(X,\epsilon)$-stable $(t-1)$-partition  $\mathcal{P}$.
Moreover, the size of the integral part of each class of $(\mathcal{P} \wedge \mathcal{V})_X$ is larger than $(1-\epsilon)n_{t-1}$.
\end{theorem}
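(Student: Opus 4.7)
My plan is to prove Theorem \ref{weak stability n case} by induction on $r$, using the value of $\tau = \tau(n_1,\ldots,n_r,t,L)$ for an appropriately large constant $L = L(\epsilon,t,r)$ to separate the analysis into a reduction-friendly ``unbalanced regime'' ($\tau > 0$) and a ``balanced regime'' ($\tau = 0$). The base case $r = t-1$ is handled directly by the classical Erd\H{o}s-Simonovits stability theorem (Theorem \ref{stability theorem}): the graph $G$ is already $(t-1)$-partite, so $\mathcal{P} = \mathcal{V}$ is trivially stable to $\mathcal{V}$, and the $\epsilon n_{t-1}$-closeness comes from Theorem \ref{stability theorem} after absorbing a small anomalous set into $X$.

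In the unbalanced regime, the top $\tau$ parts $V_1,\ldots,V_\tau$ dwarf the remaining parts by a factor of at least $L^4$. By Proposition \ref{remove big set}, these $V_i$'s are forced to be integral singleton classes in the extremal $(t-1)$-partition of $K_{n_1,\ldots,n_r}$. An averaging argument using $e(G) \geq f(n_1,\ldots,n_r,1,t) - \delta n_{t-1}^2$ shows that at most $O(\sqrt{\delta})\, n_{t-1}$ vertices in $V_1 \cup \cdots \cup V_\tau$ can fail to have near-complete crossing degree; these are moved into $X$. The residual graph $G' = G[V_{\tau+1} \cup \cdots \cup V_r]$ must then be $K_{t-\tau}$-free, since any $K_{t-\tau}$ in $G'$ would, together with one ``good'' vertex from each $V_i$ ($i \leq \tau$), extend to a $K_t$ in $G$ (such well-connected vertices have a common non-empty neighborhood into any fixed small set by the degree bound). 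Applying the inductive hypothesis to $G'$ with chromatic target $t-\tau$ produces a stable $(t-1-\tau)$-partition, which combined with the singletons $V_1,\ldots,V_\tau$ yields the desired $(t-1)$-partition of $G$. In the balanced regime, all $n_i$ are within a factor $L^{4(t-2)}$ of $n_{t-1}$, so the total vertex count $n = \sum n_i$ is $O(n_{t-1})$. Applying Theorem \ref{stability theorem} directly to $G$ as a $K_t$-free graph on $n$ vertices gives a $(t-1)$-partition $\mathcal{P}_0$ with at most $\epsilon' n^2 = O(\epsilon')\, n_{t-1}^2$ within-part edges. I then analyze the refinement $\mathcal{P}_0 \wedge \mathcal{V}$ via Proposition \ref{size gape} applied to the $(t-1)r$ intersection sizes $|P_j \cap V_i|$: each intersection is either ``large'' (in which case I declare $V_i$ to be integral in $P_j$ and move the $o(n_{t-1})$ stray vertices into $X$) or negligible (entirely into $X$). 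After iteratively moving vertices to the part maximizing crossing degree, the partition becomes $\epsilon'$-stable to $\mathcal{V}$, and Lemma \ref{remove vertices to make it stable} polishes it into a truly stable $\mathcal{P}_X$ by removing a further $O(\epsilon')$ vertices.

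The main obstacle is verifying the rigid stability conditions — especially the equality of integral-part sizes across partial classes and the dominance inequalities between integral and partial parts. I would establish this through a sensitivity analysis of $f$: the lower bound $e(G) \geq f - \delta n_{t-1}^2$ forces $e(K_{n_1,\ldots,n_r}[\mathcal{P}_0]) \geq f - O(\delta + \epsilon')\, n_{t-1}^2$, and the explicit formula for $f$ combined with the characterization in Theorem \ref{extremal (t-1)-partition} shows that any constant-factor deviation from the extremal structure costs $\Theta(n_{t-1}^2)$ edges, contradicting the near-maximality when $\delta, \epsilon' \ll \epsilon$. The final conclusion that the integral part of each class of $(\mathcal{P} \wedge \mathcal{V})_X$ exceeds $(1-\epsilon)n_{t-1}$ then follows from Proposition \ref{larger than n_{t-1}} (which gives a $\geq n_{t-1}$ bound for truly stable partitions) together with bookkeeping of the vertices moved into $X$ across the inductive calls. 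A secondary subtlety is that in the unbalanced regime one must check the scales cascade correctly when gluing the inductive partition of $G'$ with the forced singletons $V_1,\ldots,V_\tau$, so that the size parameters of the composite partition still verify the four stability conditions at the top level.
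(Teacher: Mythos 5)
Your plan breaks down in the ``balanced regime,'' and the failure is fundamental rather than a matter of details. You propose, when $\tau = 0$, to ``apply Theorem~\ref{stability theorem} directly to $G$ as a $K_t$-free graph on $n$ vertices,'' but Theorem~\ref{stability theorem} requires $e(G) \geq e(T(n,t-1)) - \delta n^2$, whereas your hypothesis only gives $e(G) \geq f(n_1,\ldots,n_r,1,t) - \delta n_{t-1}^2$. These are \emph{not} comparable: even when all parts have the same size $m$, so that $n = rm$ and $n_{t-1} = m$, the multipartite extremal number is
$$f(m,\ldots,m,1,t) = e(T(rm,t-1)) - (t-1)\binom{r/(t-1)}{2}m^2 = e(T(n,t-1)) - \Theta(n_{t-1}^2),$$
because the $(t-1)$-partition must group whole $V_i$'s together and thus forgoes all edges within each group. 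So $e(G)$ may fall $\Theta(n_{t-1}^2)$ short of $e(T(n,t-1))$, and the classical Erd\H{o}s--Simonovits theorem gives you nothing. The same issue infects your base case $r = t-1$ when the $n_i$ are unequal: $e(K_{n_1,\ldots,n_{t-1}})$ lies well below $e(T(n,t-1))$, so you cannot derive the $\epsilon n_{t-1}$-closeness condition from Theorem~\ref{stability theorem} either. This is precisely why the paper's proof of Theorem~\ref{weak stability n case} does not cite the classical stability theorem at all.

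Instead, the paper builds the $(t-1)$-partition from scratch. It first symmetrizes $G$ by iterating the operation $G \mapsto G_{V_i}$ (replacing each part by copies of its max-degree vertex), and shows the resulting graph $G^r$ admits a complete $(t-1)$-partite structure by nesting common neighborhoods of max-degree vertices ($H_0 \supset H_1 \supset \cdots$, with $A_i = V(H_{i-1}) \setminus V(H_i)$); the edge-count hypothesis forces exactly $t-1$ layers, and a counting argument shows each $A_i$ is complete to the later levels. It then reverses the symmetrization one $V_k$ at a time, at each stage redistributing $V_k$'s vertices by minimum degree into a part and re-establishing the stability inequalities (Claims~2--7) via shift arguments of the form $G_{X_{i,j}\to S}$ that would otherwise exceed $f$. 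Your $\tau$-based reduction (and Proposition~\ref{remove big set}) is broadly the right instinct for the part of the argument dealing with wildly unbalanced parts — indeed the paper uses this idea in the proof of Theorem~\ref{weak stability} — but it cannot substitute for the from-scratch construction when $\tau = 0$, which is where the real work lies.
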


\begin{proof}
Let $$\delta  \ll \epsilon_r \ll \epsilon_{r-1} \ll\ldots \ll \epsilon_1 \ll \epsilon_0=\epsilon \mbox{\quad and \quad} n_{t-1}\geq 1/\delta.$$

Let $G$ be a $K_t$-free spanning subgraph of $K_{n_1,\ldots,n_r}$ with $e(G)\geq f(n_1,\ldots,n_r,1,t)-\delta n_{t-1}^2$.
Let $G^0=G$ and define $G^i=G^{i-1}_{V_i}$ recursively.
Clearly, by Proposition \ref{neighbour copy keeps free} all of them are $K_t$-free and
\begin{equation}\label{eq 1 for weak stability}
e(G^r)\geq \ldots \geq e(G^0)=e(G)\geq f(n_1,\ldots,n_r,1,t)-\delta n_{t-1}^2.
\end{equation}

We first prove that $G^r$ has an $(X_r,\epsilon_r )$-stable $(t-1)$-partition $\mathcal{P}^r$  such that each $V_{i\leq r}$ is integral in $\mathcal{P}^r$.
The hierarchy of constants in the proof satisfy $\delta \ll \xi \ll \epsilon_r$.
Since $\delta$ is sufficiently small, by Lemma \ref{size gape} for $\delta$ there exists a $\xi\in [\delta,\sqrt[10^{r}]{\delta}]$, such that every set of $\mathcal{V}$ with size either less than $\xi n_{t-1}$ or at least $\sqrt[10]{\xi} n_{t-1}$.
Let $b$ be the minimum integer such that $n_b\leq \xi n_{t-1}$.
Let $X_r$ be the union of sets in $\mathcal{V}$ with size less than $\xi n_{t-1}$ and let $|X_r|=m_r= \sum_{i=b}^{r}n_i$.
Let $H_0=G^r-X_r$.
Define $H_{i+1}$ be the subgraph of $H_i$ induced by the neighbours of a vertex with maximum degree in $H_{i}$ recursively.
Since $G^r$ is $K_t$-free, we define graphs $H_1,\ldots,H_{s-1}$ with $s\leq t-1$.
Let $A_i=V(H_{i-1})-V(H_{i})$ for $i \in [s]$.
Clearly, $A_s$ is an independent set in $G^r$.

Note that every set of $\mathcal{V}_{X_r}$ is integral in $(A_1,\ldots,A_s)$ (each vertex in $V_i$ has common neighbors in $G^r$).
Thus\begin{equation}\label{eq 2 for weak stability}
e(H_0)\leq \sum\limits_{i=1}^{s-1} \left(\sum\limits_{v\in A_i}\frac{d_{A_i}(v)}{2}+d_{H_i}(v)\right)\leq \sum\limits_{1\leq i<j\leq s}|A_i||A_j|\leq f(n_1,\ldots,n_{b-1},1,s+1),
\end{equation}
where we set $A_0=\emptyset$.

If $s<t-1$, then by (\ref{eq 2 for weak stability}) and Proposition~\ref{pro:f(n,t)}, we have
\begin{eqnarray*}
 e(G^r) &\leq &   f(n_1,\ldots,n_{b-1},1,s+1)+m_r^2+m_r\sum_{i=1}^{b-1}n_i\\
    &\leq & f(n_1,\ldots,n_{b-1},1,t-1)+m_r^2+m_r\sum_{i=1}^{b-1}n_i\\
    &\leq & f(n_1,\ldots,n_{b-1},1,t)-n^2_{t-1}+  m_r^2+m_r\sum_{i=1}^{b-1}n_i\\
    &\leq & f(n_1,\ldots,n_{r},1,t)+ m_r \sum_{i=t-1}^{b-1}n_i -n^2_{t-1}+  m_r^2\\
     &\leq & f(n_1,\ldots,n_{r},1,t)-(1-\epsilon_r)n_{t-1}^2
\end{eqnarray*}
contradicting (\ref{eq 1 for weak stability}) (the fourth inequality holds by $f(n_1,\ldots,n_{r},1,t)\geq f(n_1,\ldots,n_{b-1},1,t)+m_r \sum_{i=1}^{t-2}n_i$ which can be easily proved by Theorem~\ref{extremal number 1}).
Therefore, we have $s=t-1$.

We can conclude that every vertex in $V(H_{i})$ for $i\in [s-1]$ is incident with all the vertices of $A_i$ in graph $H_{i-1}$.
Otherwise, suppose that $v_{j_1}\in V(H_{i})\cap V_{j_1}$ is not incident with some vertex $v_{j_2}\in A_i\cap V_{j_2}$.
Then there is no edge  between $V_{j_1}$ and $V_{j_2}$ in $G^r$.
Therefore, we have
\begin{equation*}
\begin{aligned}
\sum\limits_{v\in A_i}\frac{d_{A_i}(v)}{2}+d_{H_i}(v)&\leq  |A_i||V(H_i)|- \frac{ |V_{j_1}||V_{j_2}|}{2}\leq  |A_i||V(H_i)|-(\sqrt[10]{\xi}n_{t-1})^2.
\end{aligned}
\end{equation*}
It follows from (\ref{eq 2 for weak stability})  that  $e(H_0)\leq f(n_1,\ldots,n_{b-1},1,t)-\sqrt[5]{\xi}n_{t-1}^2$.
Since $m_r\leq r \xi n_{t-1}$, it is not hard to show that $e(G^r)\leq f(n_1,\ldots,n_{r},1,t)-\sqrt[4]{\xi}n_{t-1}^2$, a contradiction to (\ref{eq 1 for weak stability}).

Note that every vertex in $A_i$ has at most $|V(H_{i+1})|$ neighbours in $H_{i}$.
Thus $A_i$ is an independent set in $H_{i}$.
Therefore,  $(A_1,\ldots,A_{t-1})$ induces a complete $(t-1)$-partite subgraph in $H_0$, implying $G^r$ has an $(X_r,\epsilon_r)$-stable partition $(A_1,\ldots,A_{t-1})$ (recall that each part of $\mathcal{V}_{X_r}$ is integral in $(A_1,\ldots,A_{t-1})$).

Now we will show that if $G^k$ has an $(X_k,\epsilon_{k} )$-stable $(t-1)$-partition $\mathcal{P}^k$ and each $V_{j\leq k}\setminus X_k$ is integral in $\mathcal{P}^k_{X_k}$, then $G^{k-1}$ has an $(X_{k-1},\epsilon_{k-1}  )$-stable $(t-1)$-partition $\mathcal{P}^{k-1}$ with $X_{k}\subseteq X_{k-1}$.
Moreover, each $V_{j\leq k-1}\setminus X_{k-1}$ is integral in $\mathcal{P}^{k-1}_{X_{k-1}}$.

Let $\widetilde{G}^k=G^k-X_k$, $\mathcal{P}^k_{X_k}=(P^k_1,\ldots,P^k_{t-1})$ and $\widetilde{V}_i=V_i\setminus X_k$ for $i\in[r]$ with $\widetilde{V}_j\subseteq P^k_{i_j}$ for $j\in[k]$ (since each $V_{j\leq k}\setminus X_k$ is integral in $\mathcal{P}^k_{X_k}$).
Since $\mathcal{P}^k_{X_k}$ is $(X_k,\epsilon_{k})$-stable,
\begin{equation*}
d_{\widetilde{G}^k}(a)\geq  \sum_{i\neq i_k} |P_i^k|- \epsilon_{k} n_{t-1} \mbox{ for each } a\in  \widetilde{V}_k.
\end{equation*}
Let $Z_1$ be the vertices of $\widetilde{V}_k$ with degree less than $\sum_{i\neq i_k}|P_i^k|-\sqrt{\epsilon_{k}}n_{t-1}$ in $G^{k-1}$.
Thus, for each vertex $a\in\widetilde{V}_k \setminus Z_1$ in $G^{k-1}$, we have
\begin{equation}\label{minimal degree}
d_{G^{k-1}}(a)\geq  \sum_{i\neq i_k} |P_i^k|-  \sqrt{\epsilon_{k}} n_{t-1}.
\end{equation}
By our construction, we have
\begin{eqnarray*}
  e(G^k)&\geq &  e(G^{k-1})+|Z_1|\left(\sum_{i\neq  i_k } |P_i^k|- \epsilon_{k} n_{t-1}- \sum_{i\neq  i_k}|P_i^k|+\sqrt{\epsilon_{k}}n_{t-1}\right)  \\
    &\geq & e(G^{k-1})+|Z_1| (\sqrt{\epsilon_{k}}-  \epsilon_{k-1})n_{t-1}.
\end{eqnarray*}
implying $|Z_1|\leq   \sqrt{\epsilon_{k}} n_{t-1}$, as otherwise  by (\ref{eq 1 for weak stability}) we have $e(G^k)>  f(n_1,\ldots,n_r,1,t)$, hence $G^k$ contains a copy of $K_t$, a contradiction.

Clearly, the vertices in $\widetilde{V}_j$ have same neighbours in $G^{k-1}$ for $j \leq k-1$.
For $j \geq k+1$, similar as the above proof (the proof for the bound of $Z_1$), there exists a set of vertices $Z_2$ with size at most $\sqrt{\epsilon_{k}} n_{t-1}$  such that for each vertex $a\in\widetilde{V}_j \setminus Z_2$ in $G^{k-1}$, we have
\begin{equation}\label{minimal degree 1}
d_{G^{k-1}}(a)\geq  \sum_{i\neq i_j} |P_{i}^k|-  \sqrt{\epsilon_{k}}   n_{t-1}.
\end{equation}
Let $\mathcal{\widetilde{V}}=(\widetilde{V}_1,\ldots,\widetilde{V}_r)$.
By Proposition \ref{size gape}, for $\sqrt[4]{\epsilon_{k}}$ there exists $\xi$ with a $\xi\in [\sqrt[4]{\epsilon_{k}},\sqrt[4 \times 10^{(t-1)r}]{\epsilon_{k}}]$, such that every set of $(\mathcal{P}^k\wedge \mathcal{\widetilde{V}})_{X_k \cup  Z_1 \cup Z_2}$ with size either less than $\xi n_{t-1}$ or more than $\sqrt[10]{\xi} n_{t-1}$.
Let $Z_3$ be the union of sets in $(\mathcal{P}^k\wedge \mathcal{\widetilde{V}})_{X_k \cup  Z_1 \cup Z_2}$ with size less than $\xi n_{t-1}$.
Let $X_{k-1}^1=X_{k}\cup Z_1\cup Z_2\cup Z_3$.
Thus $|X_{k-1}^1|\leq 4tr\xi n_{t-1}$.

Let $S=V_{k}\cup X_{k-1}^1$.
Now we try to construct the desired $(t-1)$-partition of $G^{k-1}$ by distributing vertices in $V_{k}\setminus  X_{k-1}^1$ to each part of $\mathcal{P}^k_{S}=(\widetilde{P}_1,\ldots,\widetilde{P}_{t-1})$.
We partition $V_k\setminus  X_{k-1}^1$ into $B_1\cup \ldots\cup B_{t-1}$ such that if $v\in B_i$ then $v$ has minimum number of neighbours in $\widetilde{P}_i$.

\medskip

\noindent{\bf Claim 1.} There is no edge between $B_i$ and $\widetilde{P}_i$ for $i\in [t-1]$.

\medskip

\begin{proof}
Clearly, we may suppose that $|\widetilde{P}_i|\geq \sqrt[10]{\xi}n_{t-1}$, as otherwise there is nothing to show.
Let $a\in B_i$.
Due to (\ref{minimal degree}) and $\mathcal{P}^k$ is $\sqrt{\xi} n_{t-1}$-stable to $\mathcal{V}$, by Proposition~\ref{lemma for Q} there exists an $m(a)$ such that $a$ has more than $(\sqrt[4]{ \epsilon_k}/4)n_{t-1}$ vertices of the integral part of every $\widetilde{P}_{i\neq m(a)}$.
If there is an edge between  $a$ and $\widetilde{P}_i$, then by the definition of $B_i$, there is an edge $ab$ with $b \in\widetilde{P}_{m(a)}$.
Since $\mathcal{P}^k_{X_k}$ is $(X_k,\epsilon_{k})$-stable, we can construct a copy of $K_t$ in $G^{k-1}$ with vertices $a,b$ and their common neighbours in  the integral parts of $\widetilde{P}_{i\neq m(a)}$'s, a contradiction.
Hence we finish the proof of Claim 1.
\end{proof}

We construct the $(t-1)$-partition $\mathcal{P}_{X_{k-1}^1}^{k-1}$ with each part $P^{k-1}_j=\widetilde{P}_j\cup B_j$.
By Claim 1, each part of $\mathcal{P}^{k-1}_{X_{k-1}^1}$ is an independent set in $G^{k-1}$.
Moreover, since $\mathcal{P}^k_{X_k}$ is $(X_k,\epsilon_{k})$-stable, the size of $P^k_{i_k} \setminus V_{k}$ is no more than the size of the integral part of other class of $\mathcal{P}^k_{X_k}$.

By Proposition \ref{size gape} for $\xi$ there exists a $\zeta$ with $\zeta\in [{\xi},\sqrt[10^{(t-1)r}]{\xi}]$ such that every set of $(\mathcal{P}^{k-1}\wedge \mathcal{V})_{X_{k-1}^1}$ with size either less than $\zeta n_{t-1}$ or more than $\sqrt[10]{\zeta} n_{t-1}$.
Let $Z_4$ be the union of sets in $(\mathcal{P}^{k-1}\wedge \mathcal{V})_{X_{k-1}^1}$ with size less than $\zeta n_{t-1}$ and $X_{k-1}^2=X_{k-1}^1\cup Z_4$.
Thus $|X_{k-1}^2|\leq 6tr\zeta n_{t-1}$.

We  define $\widehat{V}_i=V_i\setminus X_{k-1}^2,\widehat{P}_j=P^{k-1}_j\setminus X_{k-1}^2$ and let $X_{i,j}=\widehat{V}_i\cap \widehat{P}_j$ for $i\in [r],j\in [t-1]$.
Now we will show that $\mathcal{P}^{k-1}_{X^2_{k-1}}$ is $\sqrt[8]{\zeta}n_{t-1}$-stable to $\mathcal{V}$.
The proofs of the following claims are quite similar to the proof in Theorem~\ref{extremal (t-1)-partition}.

\medskip

\noindent{\bf Claim 2.} If $\widehat{V}_i$ is partial in distinct $\widehat{P}_s,\widehat{P}_\ell$, then $\big||\widehat{P}_s\setminus \widehat{V}_i|-|\widehat{P}_\ell\setminus \widehat{V}_i|\big |\leq \sqrt{\zeta} n_{t-1}$.

\medskip

\begin{proof}
Without loss of generality, suppose for a contradiction that $\widehat{V}_1$ is partial in parts $\widehat{P}_1,\widehat{P}_2$ with $|\widehat{P}_2\setminus \widehat{V}_1|-|\widehat{P}_1\setminus \widehat{V}_1|> \sqrt{\zeta} n_{t-1}$.
Let $S=\bigcup\nolimits_{i=2}^{t-1} (\widehat{P}_i\setminus \widehat{V}_1)$.
Note that $|X_{1,1}|\geq \sqrt[10]{\zeta}n_{t-1}$.
Thus  by \eqref{eq 1 for weak stability} and Proposition \ref{neighbour copy keeps free}, we know that $H=G^{k-1}_{X_{1,1}\to S}$ is a $K_t$-free graph with
\begin{eqnarray*}
e(H)&\geq& e(G^{k-1})+ \sqrt{\zeta}n_{t-1}|X_{1,1}|\\
&\geq& f(n_1,\ldots,n_r,1,t)-\delta n_{t-1}^2+ \sqrt{\zeta} n_{t-1} \sqrt[10]{\zeta} n_{t-1}\\
&>& f(n_1,\ldots,n_r,1,t),
\end{eqnarray*}
 a contradiction.
We finish the proof of Claim 2.
\end{proof}

\medskip

\noindent{\bf Claim 3.} $\mathcal{P}^{k-1}_{X_{k-1}^2}$ is $1$-partial to $\mathcal{V}_{X_{k-1}^2}$.

\medskip

\begin{proof}
Without loss of generality, suppose for a contradiction that both $\widehat{V}_1,\widehat{V}_2$ are partial in $\widehat{P}_1$.
Let $\widehat{V}_1$ be partial in $\widehat{P}_{s}$ and $\widehat{V}_2$ be partial in $\widehat{P}_{\ell}$ (it is possible that $s=\ell$).
Since $\widehat{V}_2$ is partial in $\widehat{P}_1,\widehat{P}_{\ell}$, it follows from Claim 2 that $|\widehat{P}_1\setminus \widehat{V}_2|-|\widehat{P}_{\ell}\setminus \widehat{V}_2|\geq -\sqrt{\zeta} n_{t-1}$.
Let $S_1=\bigcup\limits_{i\neq \ell} (\widehat{P}_i\setminus \widehat{V}_2)$.
Thus $H^1=G^{k-1}_{X_{2,1}\to S_1}$ is a $K_t$-free graph with
$$e(H^1)\geq e(G^{k-1})+ (|\widehat{P}_1\setminus \widehat{V}_2|-|\widehat{P}_{\ell}\setminus \widehat{V}_2|-|X_{k-2}^2|)|X_{2,1}|
\geq e(G^{k-1}) -2\sqrt{\zeta}n_{t-1}|X_{2,1}|.$$
Let $S_2=\bigcup\limits_{i=2}^{t-2}(\widehat{P}_i\setminus \widehat{V}_1)\cup X_{2,1}$.
Since $\widehat{V}_1$ is partial in $\widehat{P}_1,\widehat{P}_{s}$,   Claim 2 implies $|\widehat{P}_1\setminus X_{1,1}|-|\widehat{P}_{s}\setminus X_{1,s}|\leq \sqrt{\zeta} n_{t-1}$.
Note that $|X_{2,1}|,|X_{1,s}|\geq \sqrt[10]{\zeta}n_{t-1}$.
Thus $H^2=H^1_{X_{1,s}\to S_2}$ is a $K_t$-free graph with
\begin{eqnarray*}
e(H^2)&\geq& e(H^1)+(|X_{2,1}|-\sqrt{\zeta}n_{t-1}-|X_{k-2}^2|)|X_{1,s}|\\
&\geq& e(H^1)+\dfrac{1}{2}|X_{2,1}||X_{1,s}|\\
& \geq& e(G^{k-1})+\dfrac{1}{2}\sqrt[5]{\zeta}n_{t-1}^2\\
&>&f(n_1,\ldots,n_r,1,t),
\end{eqnarray*}
a contradiction.
Thus $\mathcal{P}^{k-1}_{X_{k-1}^2}$ is $1$-partial to $\mathcal{V}_{X_{k-1}^2}$.
\end{proof}

By Claim 3 we know that if $\widehat{V}_i$ is partial in $\widehat{P}_j$, then $\widehat{P}_j\setminus \widehat{V}_i$ is the integral part of $\widehat{P}_j$.

\medskip

\noindent{\bf Claim 4.} If $\widehat{P}_{s},\widehat{P}_{\ell}$ are distinct partial classes of $\mathcal{P}^{k-1}_{X_{k-1}^2}$, then the integral  parts  of $\widehat{P}_{s}$ and $\widehat{P}_{\ell}$ are less than $\sqrt[5]{\zeta} n_{t-1}$ difference in sizes.

\medskip

\begin{proof}
In Claim 2, we have already showed that Claim 4 holds when $\widehat{V}_i$ is partial in both $\widehat{P}_s,\widehat{P}_\ell$.
Without loss of generality, we only need to show if $\widehat{V}_1$ is partial in $\widehat{P}_1$ and $\widehat{V}_2$ is partial in $\widehat{P}_2$ then $\big||\widehat{P}_1\setminus \widehat{V}_1|-|\widehat{P}_2\setminus \widehat{V}_2|\big|\leq \sqrt[5]{\zeta} n_{t-1}.$
Without loss of generality, suppose that $\widehat{V}_2$ is also partial in $\widehat{P}_3$ and let $a\in X_{2,2}$ be a vertex with maximum degree among $X_{2,2}\cup X_{2,3}$ in graph $G^{k-1}$.
Thus $H^1=G^{k-1}_{X_{2,3}\to N_{G^{k-1}(a)}}$ keeps $K_t$-free with $e(H^1)\geq e(G^{k-1})$.
Let $S=\bigcup\limits_{i\neq 3} (\widehat{P}_i\setminus \widehat{V}_1)\cup X_{2,3} $.
Note $H^2=H^1_{X_{1,1}\to S}$ is a $K_t$-free graph.
Thus
$$f(n_1,\ldots,n_r,1,t)\geq e(H^2)\geq e(G^{k-1})+(|\widehat{P}_1\setminus \widehat{V}_1|-|\widehat{P}_3\setminus \widehat{V}_2|-|X_{k-2}^2|)|X_{1,1}|,$$
which implies $|\widehat{P}_1\setminus \widehat{V}_1|-|\widehat{P}_3\setminus \widehat{V}_2|\leq \sqrt[4]{\zeta}n_{t-1}$.
By Claim 2 we know $|\widehat{P}_3\setminus \widehat{V}_2|-|\widehat{P}_2\setminus \widehat{V}_2|\leq \sqrt{\zeta}n_{t-1}$.
Thus $|\widehat{P}_1\setminus \widehat{V}_1|-|\widehat{P}_2\setminus \widehat{V}_2|\leq \sqrt[5]{\zeta}n_{t-1}$.
By symmetry of $\widehat{P}_1,\widehat{P}_2$, we have $|\widehat{P}_2\setminus \widehat{V}_2|-|\widehat{P}_1\setminus \widehat{V}_1|\leq \sqrt[5]{\zeta}n_{t-1}$ and finish the proof of Claim 4.
\end{proof}

\noindent{\bf Claim 5.} If $\widehat{V}_i$ is partial in $\widehat{P}_j$, then $|\widehat{P}_j\setminus \widehat{V}_i|\geq |\widehat{V}_i|-\sqrt[7]{\zeta}n_{t-1}$.

\medskip

\begin{proof}
Without out loss of generality, suppose that $\widehat{V}_1$ is partial in $\widehat{P}_1,\widehat{P}_2,\ldots,\widehat{P}_s$ with $|\widehat{P}_1\setminus \widehat{V}_1|\leq |\widehat{V}_1|-\sqrt[7]{\zeta}n_{t-1}$.

In graph $G^{k-1}$, we turn the neighbours of every vertex of $\widehat{V}_1$ to $\bigcup_{j=1}^{t-1} (\widehat{P}_j\setminus \widehat{V}_1)$ and delete the edges between $\widehat{P}_1\setminus \widehat{V}_1$ and $\widehat{P}_2\setminus \widehat{V}_1$.
Denote the resulting graph by $J$.
Note that $J$ is a $K_t$-free graph with
\begin{displaymath}
\begin{aligned}
e(J)&\geq e(G^{k-1})+(|\widehat{P}_1\setminus \widehat{V}_1|-2\sqrt[5]{\zeta}n_{t-1})|\widehat{V}_1|-|\widehat{P}_1\setminus \widehat{V}_1||\widehat{P}_2\setminus \widehat{V}_1|\\
&\geq e(G^{k-1})+|\widehat{P}_1\setminus \widehat{V}_1|( \sqrt[7]{\zeta}n_{t-1}-3\sqrt[5]{\zeta}n_{t-1})-2\sqrt[7]{\zeta}\cdot \sqrt[5]{\zeta} n_{t-1}^2\\
&\geq e(G^{k-1})+\sqrt{\zeta}n_{t-1}^2>f(n_1,\ldots,n_r,1,t),
\end{aligned}
\end{displaymath}
a contradiction.
The proof of Claim 5 is complete.
\end{proof}

\noindent{\bf Claim 6.} If $\widehat{V}_i$ is partial in $\widehat{P}_\ell$ and $\widehat{P}_s$ is an integral class, then $|\widehat{P}_j\setminus \widehat{V}_i|\leq |\widehat{P}_s|+\sqrt[7]{\zeta}n_{t-1}$.

\medskip
\begin{proof}
Let $S=\bigcup\limits_{\iota\neq s}(\widehat{P}_\iota\setminus \widehat{V}_i)$.
Otherwise, the graph $H=G^{k-1}_{X_{i,\ell}\to S}$ is $K_t$-free graph with more than $e(G^{k-1})+(\sqrt[7]{\zeta}-\sqrt{\zeta})\sqrt[10]{\zeta}n_{t-1}^2> f(n_1,\ldots,n_r,1,t)$ edges, a contradiction.
Therefore, we complete the proof of Claim 6.
\end{proof}

\noindent{\bf Claim 7.} If $\widehat{V}_i$ is integral in $\widehat{P}_\ell$ and $\widehat{V}_j$ is partial in $\widehat{P}_s$, then $|\widehat{P}_\ell \setminus \widehat{V}_i| \leq |\widehat{P}_s \setminus \widehat{V}_j|+\sqrt[7]{\zeta}n_{t-1}$.

\medskip
\begin{proof}
Let $\widehat{V}_j$ be partial in $\widehat{P}_{s^\prime}$.
We first suppose $s^\prime \neq \ell$.
Without loss of generality, let $a\in X_{j,s}$ be a vertex with maximum degree among set $X_{j,s}\cup X_{j,s^\prime}$ in graph $G^{k-1}$.
Thus $H^1=G^{k-1}_{X_{j,s^\prime}\to N_{G^{k-1}(a)}}$ keeps $K_t$-free with $e(H^1)\geq e(G^{k-1})$.
Let $S=\bigcup_{\iota\neq s^\prime}(\widehat{P}_\iota\setminus \widehat{V}_i)\cup X_{j,s^\prime}$ and $H^2=H^1_{X_{i,\ell}\to S}$.
Note that $H^2$ is a $K_t$-free graph.
We have $f(n_1,\ldots,n_r,1,t)\geq e(H^2)\geq e(G^{k-1})+(|\widehat{P}_\ell \setminus \widehat{V}_i|- |\widehat{P}_{s^\prime} \setminus \widehat{V}_j|-|X_{k-2}^2|)|X_{i,\ell}|$, implying $|\widehat{P}_\ell \setminus \widehat{V}_i|- |\widehat{P}_{s^\prime} \setminus \widehat{V}_j|\leq \sqrt[5]{\zeta}n_{t-1}$.
Since $\big| |\widehat{P}_s \setminus \widehat{V}_j|- |\widehat{P}_{s^\prime} \setminus \widehat{V}_j| \big| \leq \sqrt{\zeta}n_{t-1}$, we have $|\widehat{P}_\ell \setminus \widehat{V}_i| \leq |\widehat{P}_s \setminus \widehat{V}_j|+\sqrt[7]{\zeta}n_{t-1}$.

Suppose $s^\prime =\ell$.
Let $S_1=\bigcup_{\iota\neq \ell} \widehat{P}_\iota\setminus \widehat{V}_j$ and $H^1=G^{k-1}_{X_{j,s}\to S_1}$.
Let $S_2=\bigcup_{\iota\neq s} (\widehat{P}_\iota\setminus \widehat{V}_i)\cup X_{j,s}$ and $H^2=H^1_{\widehat{V}_i\to S_2}$.
Clearly, $H^1$ and $H^2$ are both $K_t$-free.
Note that $|\widehat{V}_i|\geq \sqrt[10]{\zeta}n_{t-1}$.
By Claim 2 we have $|\widehat{P}_s\setminus \widehat{V}_j|-|\widehat{P}_\ell\setminus \widehat{V}_j|\geq -\sqrt{\zeta}n_{t-1}$, hence
\begin{equation}
\begin{aligned}
f(n_1,\ldots,n_r,1,t)&\geq e(H^2)\geq e(H^1)+(|\widehat{P}_\ell\setminus \widehat{V}_i|-|\widehat{P}_s\setminus \widehat{V}_i|+|X_{j,s}|-|X_{k-1}^2|)|\widehat{V}_i|\\
&\geq e(G^{k-1})+(|\widehat{P}_s\setminus \widehat{V}_j|-|\widehat{P}_\ell\setminus \widehat{V}_j|-|X^2_{k-1}|)|X_{j,s}|\\
&+(|\widehat{P}_\ell\setminus \widehat{V}_i|-|\widehat{P}_s\setminus \widehat{V}_i|+|X_{j,s}|-|X_{k-1}^2|)|\widehat{V}_i|\\
&\geq e(G^{k-1})+(|\widehat{P}_\ell\setminus \widehat{V}_i|-|\widehat{P}_s\setminus \widehat{V}_i|)|\widehat{V}_i|.
\end{aligned}
\end{equation}
Therefore, we have $|\widehat{P}_\ell\setminus \widehat{V}_i|\leq |\widehat{P}_s\setminus \widehat{V}_j|+\sqrt[7]{\zeta}n_{t-1}$, and we finish the proof of Claim 7.
\end{proof}

By Claims 3-7, $\mathcal{P}^{k-1}_{X^2_{k-1}}$ is $\sqrt[8]{\zeta}n_{t-1}$-stable to $\mathcal{V}$.
By Lemma \ref{remove vertices to make it stable}, we can remove at most $2tr\sqrt[7]{\zeta}n_{t-1}$ vertices, say $Z_5$, to ensure that $(\mathcal{P} \wedge  \mathcal{V})_{X_{k-1}^2\cup Z_5}$  satisfies the following.
\begin{itemize}
\item  the sizes of integral parts of partial classes are equal to each other,
\item  the size of the integral part of any class is at least  the size of  any partial class and
\item  after removing any integral $V_i$, the size of the resulting set is at most the size of the integral part of any other class.
\end{itemize}
Let $X_{k-1}=X_{k-1}^2\cup Z_5$.
Thus $|X_{k-1}|\leq \sqrt[8]{\zeta}n_{t-1}\leq \epsilon_{k-1} n_{t-1}$.
Combining (\ref{minimal degree}) and (\ref{minimal degree 1}),  we can see that the partition $\mathcal{P}^{k-1}$ is an $(X_{k-1},\epsilon_{k-1})$-stable partition of $G$.
Moreover, every $V_{i\leq k-1}\setminus X_{k-1}$ keeps integral in $\mathcal{P}^{k-1}_{X_{k-1}}$.
Furthermore, similar as the proof of Proposition~\ref{larger than n_{t-1}}, the size of integral part of each class of $\mathcal{P}_{X_{k-1}}$  is at least $(1-\epsilon_{k-1}) n_{t-1}$.

Thus recursively, we obtain an $(X_0,\epsilon)$-stable partition $\mathcal{P}^0$ of $G$, and  we finish the proof of Theorem~\ref{weak stability n case}.
\end{proof}

Now we start to prove Theorem \ref{weak stability}.

\medskip

\noindent{\bf Proof of Theorem~\ref{weak stability}.}
Note that each graph with chromatic number $t$ is a subgraph of a $t$-partite  graph.
We only need to prove that the theorem holds for $K_t^s$ where $s$ is an integer depending on $F$.

Let $G$ be a  $K_t^s$-free $r$-partite graph with parts $\mathcal{V}=(V_1,\ldots,V_r)$.
Let  $0<\delta\ll \delta_1 \ll\epsilon<1$, where $\delta$ and $\delta_1$   will be determined later by $\epsilon$, Lemma \ref{removal lemma} and Theorem \ref{weak stability n case}.
Let $n \geq 1/\delta $.
Suppose that
\begin{equation}\label{eq for weak}
e(G)\geq f(n_1,\ldots,n_t,1,t)-\delta n^2_{t-1}.
\end{equation}
We shall show that $G$ has an $(X,\epsilon )$-stable $(t-1)$-partition $\mathcal{P}$ after removing $ \epsilon n_{t-1}^2$ edges.

Let $\tau=\tau(n_1,\ldots,n_r,t,tr)$.
We first discuss the case $\tau=0$.
Since $\tau=0$, we have $n_{t-1}\geq (1/tr)^{4t}n$.
For  $\alpha=(1/tr)^{8t}\delta_1$, by Lemma \ref{removal lemma}, $G$ contains a $K_t$-free graph $\widetilde{G}$ with $e(\widetilde{G})\geq e(G)-(1/tr)^{8t}\delta_1 n^2 \geq e(G)-\delta_1 n_{t-1}^2$ edges.
Hence by (\ref{eq for weak}), the resulting graph $\widetilde{G}$ is $K_t$-free with $e(\widetilde{G})\geq  f(n_1,\ldots,n_t,1,t)- 2\delta_1 n^2_{t-1}$.
Therefore, by Theorem \ref{weak stability n case}, $\widetilde{G}$ has an $(X,\epsilon)$-stable $(t-1)$-partition $\mathcal{P}$ and each class of $\mathcal{P}$ is larger than $(1-\epsilon) n_{t-1}$.
Thus we finish the proof of the theorem in this case.

Then we discuss the case when $\tau\geq 1$.
Let $U_L$ be the union of large sets $V_1,\ldots,V_{\tau}$ and $U_S=V(K)-V_L$ to be the union of rest small sets.

If $\tau\geq 1$, then $\tau\leq t-2$ (recall the definition of $\tau$).
Clearly, we have
\begin{equation}\label{1.4 Eq 1}
f(n_1,\ldots,n_r,1,t)\geq e(K_{n_1,\ldots,n_\tau})\geq e(K_{n_1,\ldots,n_r})-|U_S|^2\geq e(K_{n_1,\ldots,n_r})-(r n_{\tau+1})^2.
\end{equation}
Let $Z_i^1$ be the vertices of large set $V_i$ with degree less than $n-|V_i|-2rn_{\tau+1}$.
Thus $e(G)\leq e(K_{n_1,\ldots,n_r})-2rn_{\tau+1}|Z_i^1|$.
Hence by (\ref{eq for weak}) and (\ref{1.4 Eq 1}), we have $2rn_{\tau+1}|Z_i^1|\leq 2(rn_{\tau+1})^2$ and thus $|Z_i^1|\leq rn_{\tau+1}\leq r(1/tr)^4n_\tau\leq (1/tr)^3n_\tau$.
Note that every vertex of $V_i$ except $Z_i^1$ is adjacent to all but at most $2rn_{\tau+1}\leq 2rn_\tau /(tr)^4$ vertices in other classes of  $K_{n_1,\ldots,n_\tau}$.
Therefore, we can easily find a copy of $K_\tau^s$ in the subgraph of $G$ induced by the union of any $n_\tau/2$ vertices from each set of $V_1,\ldots,V_\tau$.

Now we prove $G[U_S]$ can be $K^s_{t-\tau}$-free by only removing $\delta_1 n_{\tau+1}^2$ edges.
Suppose $G[U_S]$ contains a copy of $K^s_{t-\tau}$.
Then there is at least one vertex of $K^s_{t-\tau}$ having neighbours less than $|U_L|-2tr n_{\tau+1}$ neighbours in $U_L$, as otherwise all vertices of $K^s_{t-\tau}$ will have more than $n_i-2(tr)^2 n_{\tau+1} \geq  n_\tau/2$ common neighbours in each set of $V_1,\ldots,V_{\tau}$.
Thus we can extend $K^s_{t-\tau}$ to $K_t^s$ with some $s$ vertices from each $V_i$, a contradiction to that $G$ is $K_{t}^s$-free.
Therefore, at least one vertex $v_1$ of $K^s_{t-\tau}$ has degree at most $|U_L|-2tr n_{\tau+1}$ in $U_L$, implying $d_G(v_1)\leq |U_L|-tr n_{\tau+1}$.

Let $H^1=G_{\{v_1\}\to U_L}$.
Thus $e(H^1)\geq e(G)+|U_L|-d_G(v_1)\geq e(G)+trn_{\tau+1}$.
Since $G$ is $K_t^s$-free and $v_1$ is not adjacent any copy of $K_{t-1}$ in $H^1$, $H^1$ is still $K_t^s$-free.

We repeat this process recursively and obtain graph $H^{C}$ such that $H^C[U_S]$ is $K_{t-\tau}^s$-free.
By Lemma \ref{removal lemma} $H^C[U_S]$ contains a $K_{t-\tau}$-free subgraph $H_S$ with $e(H_S)\geq e(H^C[U_S])-(1/tr)^{8t}\delta_1  n_{\tau+1}^2$.
Thus by Theorem~\ref{extremal number 1}, we have
\begin{equation}\label{eq 7}
e(H^C[U_S])\leq  f(n_{\tau+1},\ldots,n_r,1,t-\tau)+(1/tr)^{8t}\delta_1  n_{\tau+1}^2
\end{equation}
It follows from Lemma \ref{remove big set} and \eqref{eq for weak} that
\begin{equation}\label{eq 8}
\begin{aligned}
e(H^C[U_S])&\geq e(H^C)-\sum\limits_{1\leq i<j\leq \tau}n_i n_j-\sum\limits_{i=1}^{\tau}n_i \sum\limits_{i=\tau+1 }^{r}n_i\\
&\geq e(G)+Ctrn_{\tau+1}-\sum\limits_{1\leq i<j\leq \tau}n_i n_j-\sum\limits_{i=1}^{\tau}n_i \sum\limits_{i=\tau+1 }^{r}n_i\\
&\geq  f(n_{\tau+1},\ldots,n_r,1,t-\tau)+Ctrn_{\tau+1}-\delta n_{t-1}^2.
\end{aligned}
\end{equation}
Therefore, by \eqref{eq 7} and \eqref{eq 8} we have $Ctrn_{\tau+1}\leq ( 1/tr )^{8t}\delta_1  n_{\tau+1}^2+\delta n_{t-1}^2 \leq ( 1/tr )^{8t}tr\delta_1 n_{\tau+1}^2$.
Thus we only need to remove $C |U_S|\leq (1/tr)^{8t}tr\delta_1 n_{\tau+1}^2\leq \delta_1 n_{t-1}^2$ edges to make $G[U_S]$ being $K^s_{t-\tau}$-free.

We denote the new graph by $\widetilde{G}$ after removing these $\delta_1 n_{t-1}^2$ edges in $G$.
Thus
\begin{equation*}
\begin{aligned}
e(\widetilde{G}[U_S])
&\geq e(G[U_S])-\delta_1 n_{t-1}^2\\
&\geq e(G)-\sum\limits_{1\leq i<j\leq \tau}n_i n_j-\sum\limits_{i=1}^{\tau}n_i \sum\limits_{i=\tau+1 }^{r}n_i-\delta_1 n_{t-1}^2\\
&\geq f(n_1,\ldots,n_r,1,t)-\sum\limits_{1\leq i<j\leq \tau}n_i n_j-\sum\limits_{i=1}^{\tau}n_i \sum\limits_{i=\tau+1 }^{r}n_i-2\delta_1 n_{t-1}^2\\
&\geq f(n_{\tau+1},\ldots,n_r,1,t-\tau)-2\delta_1 n_{t-1}^2.
\end{aligned}
\end{equation*}
Hence by Theorem \ref{weak stability n case} there exists an $(X_S,\epsilon/2)$-stable $(t-\tau-1)$-partition $\mathcal{P}$ of $\widetilde{G}[U_S]$ and each class of $\mathcal{P}_{X_S}$ is larger than $(1-\epsilon/2)n_{t-1}$.

Let $Z_i^2$ be the vertices of large set $V_i$ with degree less than $n-|V_i|-2\sqrt{\delta}n_{t-1}$.
From \eqref{eq for weak}, we have
$$e(\widetilde{G})-e(\widetilde{G}[U_S])\geq \sum\limits_{1\leq i<j\leq \tau}n_i n_j-\sum\limits_{i=1}^{\tau}n_i \sum\limits_{i=\tau+1 }^{r}n_i-\delta n_{t-1}^2.$$
Thus we have $|Z^2_i|\leq 2\sqrt{\delta}n_{t-1}$.
Let $X=X_S\cup Z_1^2\cup \ldots\cup Z_\tau^2$.
Therefore, the partition $\mathcal{P}$ is an $(X, \epsilon)$-stable $(t-1)$-partition of $G$ after removing $\epsilon n_{t-1}^2$ edges with $|X|\leq (2r\sqrt{\delta}+\epsilon/2) n_{t-1} \leq \epsilon n_{t-1}$ and each class of $\mathcal{P}$ is larger than $(1-\epsilon)n_{t-1}$.
We finish the proof of Theorem~\ref{weak stability}.\hfill$\square$ \medskip

\noindent {\bf Proof of Theorem \ref{strong stability}}.
Let $F$ be a graph with chromatic number $t\geq 3$ and $G \subseteq K_{n_1,\ldots,n_r}$ be an extremal graph for $F$.
Let $\epsilon \gg \delta_3  \gg \delta_2 \gg \delta_1>0$ and $n_{t-1}\geq 1 / \delta_1$.

By Theorem~\ref{extremal number 1}, there is an extremal graph for $K_{t}$ which is $(t-1)$-partite, implying $e(G)\geq f(n_1,\ldots,n_r,1,t)$ (since  every $(t-1)$-partite graph is $F$-free).
Since $\delta_2 \gg \delta_1$,   by Theorem \ref{weak stability}, after removing at most $\delta_2n_{t-1}^2$ edges, the resulting graph $\widetilde{G}$ has an $(X,\delta_2)$-stable $(t-1)$-partition $\mathcal{P}=(P_1,\ldots,P_{t-1})$.
Moreover, we partition $X$ into $X_i\subseteq P_i$ such that each vertex of $X$ has smallest number of neighbours in its own class of $\mathcal{P}$.

Let  $\epsilon \gg \delta^\prime_3  \gg \delta_2$.
By Proposition \ref{size gape} there exists an $\epsilon_1$ with $\epsilon_1\in [\delta^\prime_3,\sqrt[10^{(t-1)r}]{\delta^\prime_3}]$, such that every set of $(\mathcal{P}\wedge \mathcal{V})_{X}$ with size either less than $\epsilon_1 n_{t-1}$ or more than $\sqrt[10]{\epsilon_1} n_{t-1}$.
It is clear that $\epsilon_1 \ll \epsilon$.
Let $Z_i$ be subset of $P_i$ in $(\mathcal{P}\wedge \mathcal{V})_X$ with size less than $\epsilon_1 n_{t-1}$ and $Z=(\bigcup^{t-1}_{i=1}Z_i)  \cup X$.
Thus $|Z|\leq 2tr\epsilon_1 n_{t-1}$.

Let $\widetilde{V}_i=V_i\setminus  Z$, $\widetilde{P}_i=P_i\setminus Z$, $X_{i,j}=\widetilde{V}_i\cap \widetilde{P}_j$ and $\widetilde{\mathcal{V}}=(\widetilde{V}_1,\ldots,\widetilde{V}_r)$.

Let $v\in V_k$ and $P_{i_k}\setminus V_k$ be any smallest part among $\mathcal{P}_{V_k}$.
Let $\widetilde{V}_\iota$ be integral in $\widetilde{P}_{i_k}$.
Choose $B \subseteq\widetilde{V}_\iota$ with size $|F|$.
By Proposition \ref{neighbour copy keeps free}, $G_{\{v\} \to N_G[B]}$ is also $F$-free.
Since $G$ is the extremal graph, we have
\begin{equation}\label{minimum degree in F free extremal}
d_G(v)\geq  |N_{\widetilde{G}}[B]|  \geq \sum_{i\neq i_k}|P_i\setminus V_k|-\sqrt{\epsilon_1} n_{t-1}.
\end{equation}
If $V_k$ does not belong to $Z$, then since $\mathcal{P}_Z$ is $\sqrt{\epsilon_1}n_{t-1}$-stable to $\widetilde{\mathcal{V}}$ by \eqref{minimum degree in F free extremal} and Proposition~\ref{lemma for Q}, there exists an integer $m(v)\in [1,t-1]$ with $v$ adjacent to $\sqrt[6]{\epsilon_1}n_{t-1}$ vertices from the integral part of each $\widetilde{P}_{i\neq m(v)}$.

Suppose that $V_k \subseteq Z$.
Let $m$ be the minimum size of  integral parts of classes of $\mathcal{P}_Z$ and $m^\prime$ be the minimum size of partial parts of classes of $\mathcal{P}_Z$ ($\mathcal{P}_Z$ is 1-partial).
By (\ref{minimum degree in F free extremal}), we have $d_G(v)\geq n-m-m^\prime-3|Z|$.
Suppose that $v\in V_k$ is adjacent to less than $\sqrt[6]{\epsilon_1}n_{t-1}$ vertices from the integral part of $\widetilde{P}_{i_1}$ and $\widetilde{P}_{i_2}$.
Let  $X_{a_1,i_1}$  and $X_{a_2,i_2}$   be the possible partial parts of $\widetilde{P}_{i_1}$ and $\widetilde{P}_{i_2}$ (if $\widetilde{P}_{i}$ is integral, we set the partial part empty).
Since $\mathcal{P}=(P_1,\ldots,P_{t-1})$ is an $(X,\delta_2)$-stable $(t-1)$-partition, the integral part of $\widetilde{P}_{i}$ is at least $m-|Z|$ and $m^\prime\leq m/2+|Z|$.
Thus
$$d_G(v)\leq \sum_{i\neq i_1,i_2}|P_i|+|X_{a_1,i_1}|+|X_{a_2,i_2}|+3\sqrt[6]{\epsilon_1}n_{t-1}<  n-m-m^\prime-3|Z|, $$
a contradiction.
Thus there exists an integer $m(v)\in [1,t-1]$ such that $v$ is adjacent to $\sqrt[6]{\epsilon_1}n_{t-1}$ vertices from the integral part of each $\widetilde{P}_{i\neq m(v)}$.

Let $Y_i$ be the vertices of $P_i$ with more than $\epsilon n_{t-1}$ neighbours in their own part.
Let $c_{\epsilon,F}=tN(\sqrt[3]{\epsilon_1},\ldots,\sqrt[3]{\epsilon_1};1/2;|F|)$ and $Y=\bigcup_{i=1}^{t-1}Y_i$.
Thus every vertex $v$ of $Y$ is adjacent to $\sqrt[6]{\epsilon_1}n_{t-1}$ vertices from the integral part of each $\widetilde{P}_{i\neq m(v)}$ and $\sqrt[6]{\epsilon_1}n_{t-1}$ vertices from part $\widetilde{P}_{m(v)}$.
Since $\mathcal{P}$ is an $(X,\zeta )$-stable partition of $\widetilde{G}$,  we can pick $\sqrt[5]{\epsilon_1}n_{t-1}$ vertices $Q_i(v)$ from each $\widetilde{P}_{i}$ to form a copy of $K_{t-1}^{\sqrt[5]{\epsilon_1}n_{t-1}}$ whose vertices are all adjacent to $v$.

Let $\mathcal{Q}(v)=(Q_1(v),\ldots,Q_{t-1}(v))$.
For a set of vertices $X$, let  $$\bigcap_{v\in X} \mathcal{Q}(v)=\left(\bigcap_{v\in X}Q_1(v),\ldots,\bigcap_{v\in X}Q_{t-1}(v)\right).$$

Suppose that some $|Y_i|$ is larger than $c_{\epsilon,F}/t$.
Let $\tau=\tau(n_1,\ldots,n_r,t,r)$.
We divide the proof into the following two cases.

If $\tau=0$, then $n_{t-1}\geq (1/r)^{4t}n$.
Note that each part size of $\mathcal{Q}(a)$ is larger than $\sqrt[5]{\epsilon_1}n_{t-1}\geq \sqrt[4]{\epsilon_1}n$ for each vertex $a\in Y_i$.
Thus by Lemma \ref{selection lemma}, there exist $|F|$ vertices, denote it $C_F$, of $Y_i$ such that every part size of $\bigcap_{v\in C_F} \mathcal{Q}(v)$ is larger than $\epsilon_1 n_{t-1}/2$.
Hence $C_F$ and $\bigcap_{v\in C_F} \mathcal{Q}(v)$ can form a copy of  $K_t^{\epsilon_1 n_{t-1}/4}$, implying $F\subset G$, a contradiction.
Hence $|Y_i|\leq c_{\epsilon,F}/t$, implying $|Y|\leq c_{\epsilon,F}$.

Let $\tau\geq 1$.
We will show that $V_1,\ldots,V_\tau$ are parts of $\mathcal{P}$.
We will first show $\widetilde{V}_1,\ldots,\widetilde{V}_\tau$ are parts of $\mathcal{P}_Z$.
Suppose that part $\widetilde{P}_i$ of $\mathcal{P}_Z$ contains vertices of set in $\widetilde{V}_1,\ldots,\widetilde{V}_\tau$, let it be $\widetilde{V}_{i_1}$, and also contains vertices from other set of $\widetilde{\mathcal{V}}$, let it be $\widetilde{V}_{i_2}$.
If $\widetilde{V}_{i_1}$ is integral in $\widetilde{P}_i$ then $|\widetilde{P}_i\setminus \widetilde{V}_{i_2}|\geq |\widetilde{V}_{i_1}|$.
If $\widetilde{V}_{i_1}$ is partial in $\widetilde{P}_i$ then due to $\mathcal{P}$ is $Y$-stable of $G$ then $|\widetilde{P}_i\setminus \widetilde{V}_{i_1}|\geq |V_{i_1}|-|Y|$.
Thus by letting $k=i_1,i_2$ and $m=1,\ldots,t-1$ in (\ref{minimum degree in F free extremal}) we obtain each part of $\mathcal{P}_Z$ is larger than $ |\widetilde{V}_{i_1}|/2$, implying every part of $\mathcal{P}_Z$ contains at least one of $\widetilde{V}_1,\ldots,\widetilde{V}_\tau$, a contradiction due to $\tau\leq t-2$.
Thus let $\widetilde{P}_1=\widetilde{V}_1,\ldots,\widetilde{P}_\tau=\widetilde{V}_\tau$.
Due to each $Y_i$ has minimal neighbours in $\widetilde{P}_i$ and by (\ref{minimum degree in F free extremal}) trivially, we obtain $P_1=V_1,\ldots,P_\tau=V_\tau$.

Suppose that $|Y_i|\geq c_{\epsilon,F}/t$ for some $Y_i$ .
Since vertex in every $P_{i\leq \tau}$ has no neighbour in its own part, we only consider $Y_{i>\tau}$.
Note $n_{t-1}\geq (1/r)^{4t}n_{\tau+1}$.
Thus every $P_{i>\tau}$ has $|P_i|\leq rn_{\tau+1}\leq r^{5t}n_{t-1}$.
For each vertex $a\in Y_i$, the size of each part of $\mathcal{Q}(a)$ in  $P_{i>\tau}$ is larger than $\sqrt[5]{\epsilon_1}n_{t-1}\geq \sqrt[4]{\epsilon_1}|P_i|$ .
As the proof in the case $\tau=0$, there is a copy of $K_{t-\tau}^{\epsilon_1/4 n_{t-1}}$ in $P_{i>\tau}$'s.
Note that each vertex of $K_{t-\tau-1}^{|F|}$ have at least $|V_{i\leq \tau}|-\delta_2 n_{t-1}$ neighbours in each $V_{i\leq \tau}$.
There is a copy of  $K_t^{\epsilon_1 n_{t-1}/4}$ in $G$, implying $F\subset G$, a contradiction.
Thus we have $|Y_i|\leq  c_{\epsilon,F}/t$, implying $|Y|\leq c_{\epsilon,F}$.
Note that the degree of vertex of $V_k\setminus Y$ in every $P_j$ is less than $\sum_{i\neq j} |P_i\setminus V_k|+\sqrt{\epsilon_1}n_{t-1}\leq \sum_{i\neq i_k} |P_i\setminus V_k|+2\sqrt{\epsilon_1}n_{t-1}$.
Thus by \eqref{minimum degree in F free extremal}, the difference of the degrees of vertices of $G-Y$ in same class of $\mathcal{V}$ is at most $\epsilon n_{t-1}$.

Let $\delta_3=\sqrt[5]{\epsilon_1}\gg \delta_2$.
In the proof we show every vertex of $Y$ is adjacent to a  copy of $K_t^{\delta_3 n_{t-1}}$.
For vertex $v\in V(G)\setminus Y$, due to (\ref{minimum degree in F free extremal}) and the neighbours size of $a$ in its own part is less than $\epsilon n_{t-1}$, we have $|N_{K[\mathcal{P}]}(v)\bigtriangleup N_G(v)|\leq \epsilon n_{t-1}$.
We finish the proof the strong stability theorem.
\hfill$\square$ \medskip

\section{Applications}\label{prove conjecture}\label{section 6}
\noindent{\bf Proofs of Theorems~\ref{strong bollobas} and~\ref{conjecture}.}
Given $r \geq t \geq 3$ and $k \geq 2$, let $n_1 \geq \ldots \geq n_r$  and $n_{t-1}$ be sufficiently large.
Let $G \subseteq K_{n_1,\ldots,n_r}$ be an extremal graph for $kK_t$.
We will show that $e(G)=f(n_1,\ldots,n_r,k,t).$
Moreover,  the extremal graph is obtained from a complete $(t-1)$-partite graph in  $K_{n_1,\ldots,n_r}$ by joining all possible edges incident with $k-1$ fixed vertices.

Let $\epsilon>0$ be a small constant.
Since $e(G)\geq f(n_1,\ldots,n_r,1,t)$, by Theorem \ref{strong stability}  there exist   constants $\delta_1 \ll \delta_2 \ll \delta_3 \ll \epsilon$ and $c_{\epsilon,F}$ depending on and $r$, $\epsilon$ and $F$ such that if $n_{t-1}\geq 1 / \delta_1$ then
\begin{itemize}
\item [$(a)$] there exists an $(X,\delta_2)$-stable $(t-1)$-partition $\mathcal{P}$,
\item  [$(b)$] there exists a set $Y$ with $|Y|\leq c_{\epsilon,F}$ such that every vertex $v$ of $G-Y$ satisfies $|N_G(v)\bigtriangleup N_{K_{n_1,\ldots,n_r}[\mathcal{P}]}(v)|\leq \epsilon n_{t-1}$ and
 \item [$(c)$]  the difference of the degrees of vertices of $G-Y$ in same class of $\mathcal{V}$ is at most $\epsilon n_{t-1}$ and
\item  [$(d)$] every vertex of $Y$ is adjacent to at lest  $\delta_3 n_{t-1}$   to each class of $\mathcal{P}$ such that they induced a complete $(t-1)$-partite graph.
\end{itemize}

For an edge $xy$ inside $P_s \setminus Y$, we say $xy$ \textcolor{blue}{{\it good}} if $xy$ is adjacent to a copy of $K_{t-2}^{kt}$ consisting of edges between different $P_i$'s and say $xy$  \textcolor{blue}{{\it bad}} otherwise.
Let $E$ be the set of good edges.

\begin{center}
\begin{tikzpicture}[scale = 1]
\tikzstyle{every node}=[scale=0.7]

\draw  (0,0) ellipse  (0.4 and 1.2);
\draw  node at (-1,0) {$\widetilde{P}_1$};

\draw [purple] (0,2) circle (2pt);
\draw [purple] node at (-0.25,2.1) {$x$};

\draw [blue](0,1.5) circle (2pt);
\draw [blue] node at (-0.25,1.4) {$y$};

\draw [purple] (3,1.4) ellipse  (0.4 and 1.2);
\draw [purple] node at (3.8,1.4) {$\widetilde{V}_{i_1}$};

\draw [blue] (3,-1.4) ellipse  (0.4 and 1.2);
\draw [blue] node at (3.8,-1.4) {$\widetilde{V}_{i_2}$};

\draw [line width=0.1cm,   red ,opacity=1] (0,2)--(0,1.5);
\draw [line width=0.1cm,  dotted,black ,opacity=0.5] (0,1.5)--(2.6,1.4);
\draw [line width=0.1cm, dotted,black ,opacity=0.5] (0,2)--(2.6,-1.4);
\draw [line width=0.1cm, dotted,black ,opacity=0.5] (0.4,0)--(2.6,-1.4);
\draw [line width=0.1cm, dotted,black ,opacity=0.5] (0.4,0)--(2.6,1.4);

\draw node at (1.5,-3) {Figure 3. A  bad edge $xy$ with $x\in V_{i_1},y\in V_{i_2}$};
\end{tikzpicture}
\end{center}

\medskip

\noindent{\bf Claim 1.}   $|E|\leq k\epsilon n_{t-1}$.

\medskip

\begin{proof}
Clearly, there is no matching on $k$ edges in $E$, as otherwise we can easily find a copy of $kK_t$.
Since each vertex is adjacent to at most $ \epsilon n_{t-1}$ vertices in their own class of $\mathcal{P}$, we have $|E|\leq k\epsilon n_{t-1}$.
\end{proof}

\noindent{\bf Claim 2.} If $Y=\emptyset$, then $e(G)-|E| \leq f(n_1,\ldots,n_r,1,t)$. Moreover, the equality holds when $G-E$ is a $(t-1)$-partite graph.

\medskip

\begin{proof}
Let $\widetilde{V}_i=V_i \setminus X $, $\widetilde{P}_j= P_j \setminus X$, $X_{i,j}=\widetilde{V}_i\cap \widetilde{P}_j$ and $X_i=X \cap P_i$.
Let $m$ be the minimum size of the integral part of all classes of $\mathcal{P}_X$.

Let $xy$ be a bad edge in $P_s$ with $x \in V_{i_1}$ and $y\in V_{i_2}$.
Suppose that the size of $X_{i_1,j}$ is less than $m- 6\epsilon  n_{t-1}$ for each  $j\in[t-1]$ distinct from $s$.
By $(b)$, $x$ is adjacent to  $|\widetilde{P}_{j}\setminus \widetilde{V}_{i_1}|- \epsilon  n_{t-1}$ vertices of $\widetilde{P}_{j\neq s}$.
In particular, $x$ is adjacent to  at least $5\epsilon  n_{t-1}$ vertices of each integral part of $\widetilde{P}_{j\neq s}$.
If  $V_{i_2}\subseteq X$, $\widetilde{V}_{i_2}$ is partial in $\mathcal{P}_X$ or $\widetilde{V}_{i_2}$ is integral in $\widetilde{P}_s$, then $y$ is adjacent to all but at most $\epsilon  n_{t-1}$ vertices of $\widetilde{P}_{j\neq s}$ and hence $xy$ is adjacent to a copy of $K_{t-2}^{kt}$ consisting of edges between different $P_{j\neq s}$'s, a contradiction.

Now we may assume that $\widetilde{V}_{i_2}$ is integral in $\widetilde{P}_{q}$ with $q\neq s$.
Note that $|\widetilde{P}_{s}|\geq m$ and    $|\widetilde{P}_{q} \setminus \widetilde{V}_{i_2}| \leq m$ (by (a), $\mathcal{P}_X$ is stable to $\mathcal{V}_X$).
Consider vertices in $V_{i_2}$, it follows from (b) and (c) that $  |P_q|-2 \epsilon  n_{t-1}  \leq |\widetilde{P}_s| +|\widetilde{V}_{i_2}| \leq|P_q|+2 \epsilon  n_{t-1}$, implying $m \leq |\widetilde{P}_s |\leq m+3\epsilon  n_{t-1}$ and $m-3\epsilon  n_{t-1} \leq |\widetilde{P}_{q} \setminus \widetilde{V}_{i_2}| \leq m$.
Recall that $|X_{i_1,q}|\leq m- 6\epsilon  n_{t-1}$.
We can easily see that $xy$ is adjacent to a copy of $K_{t-2}^{kt}$ consisting of edges between different $P_{j\neq s}$'s, a contradiction.
Therefore, there exists a constant $\ell$ distinct from $s$  such that  $|X_{i_1,\ell}| \geq m- 6\epsilon  n_{t-1}$.
Remind that $m\leq |\widetilde{P}_s|\leq m+3\epsilon n_{t-1}$, thus the partial part size of $\widetilde{P}_s$ is less than $3\epsilon n_{t-1}$, implying $|X_{i_1,s}|\leq 3\epsilon n_{t-1}$.

Note $|\widetilde{P}_\ell\setminus (\widetilde{V}_{i_1}\cup \widetilde{V}_{i_2})|\leq \epsilon n_{t-1}$.
Consider vertices in $V_{i_2}$, applying (c) again, we have $|X_{i_1,\ell}|=|\widetilde{P}_s\setminus \widetilde{V}_{i_2}|\leq |\widetilde{P}_s|+\Theta( \epsilon  n_{t-1})=m+\Theta( \epsilon  n_{t-1})$.
Similarly, consider vertices in $V_{i_1}$, we have $|X_{i_2,\ell}|=|\widetilde{P}_s\setminus \widetilde{V}_{i_1}|=m+\Theta( \epsilon  n_{t-1})$.
Therefore, there exists an integer $\ell(i_1,i_2)=\ell$ such that
$$|X_{i_1,\ell(i_1,i_2)}|=|X_{i_2,\ell(i_1,i_2)}|=m+\Theta( \epsilon  n_{t-1}).$$
Otherwise, $xy$ is adjacent to a copy of $K_{t-2}^{kt}$ consisting of edges between different $P_{j\neq s}$'s, a contradiction.

Now we show that $P_{\ell(i_1,i_2)}\setminus (V_{i_1}\cup V_{i_2})=\emptyset$.
Suppose that exists a vertex $z\in P_{\ell(i_1,i_2)}\setminus (V_{i_1}\cup V_{i_2})$ and let $z\in V_{i_3}$.
Thus by $(b)$ we have $d_G(v)=\sum_{i\neq \ell}|P_i\setminus V_{i_3}|+\Theta(\epsilon n_{t-1})$.
If $V_{i_3}\subset X$ then $d_G(v)\geq \sum_{i\neq s}|P_i|+\Theta(\epsilon n_{t-1})$ implying $m\geq 2m+\Theta(\epsilon n_{t-1})$, a contradiction.
Let $\widetilde{V}_{i_3}$ has vertices in $\widetilde{P}_k$.
By $(b),(c)$ we have $|\widetilde{P}_k\setminus \widetilde{V}_{i_3}|=2m+\Theta(\epsilon n_{t-1})$.
By $(a)$ we have $|\widetilde{P}_k\setminus \widetilde{V}_{i_3}|\leq m$, a contradiction.
Thus $P_{\ell(i_1,i_2)}\setminus (V_{i_1}\cup V_{i_2})=\emptyset$.
We call such $P_{\ell(i_1,i_2)}$ a  \textcolor{blue}{{\it bad  class}} of the bad edge $xy$ and call such $(i_1,i_2)$ \textcolor{blue}{{\it bad  pair}}.

We can conclude there is no $i$ which appears in two bad pairs.
Otherwise, suppose we have bad pairs $(i_1,i_2),(i_2,i_3)$.
Due to $(a)$ we have $|X_{i_1,\ell(i_1,i_2)}|\geq |X_{i_2,\ell(i_1,i_2)}|+|X_{i_2,\ell(i_1,i_2)}|=2m+\Theta(\epsilon n_{t-1})$, a contradiction.

Let $G^\prime=G-E$.
Suppose that there is a bad edge $v_iv_j$ in $P_1$, otherwise we are done.
Let $v_i\in V_i$ and $v_j\in V_j$.
Clearly, $V_i \cap P_1$ and $ V_j\cap P_1$ forms a bipartite graph in $P_1$ and other vertex in $P_1$ is not adjacent to $(V_i \cup V_j) \cap P_1$.
Let $G^\ast$ be the graph obtained from $G^\prime$ by adding all possible edges between $P_i$'s and all edges between $V_i$ and $V_j$ (in the same class of $\mathcal{P}$) incident with a bad edge.

We can conclude $G^\ast$ is $K_t$-free.
Otherwise we have $K_t\subset G^\ast$.
Let $V(K_t)=\{a_1,\ldots,a_t\}$ with $a_i\in V_i$ for $i\in [t]$.
By Pigeonhole Principle, we can suppose $a_1,a_2\in P_1$ thus $a_1a_2$ form a bad edge.
Note $1$ cannot appear in other bad pair except $(1,2)$ thus $a_{i\geq 3}\notin P_1$.
Obviously, we have $a_{i\geq 3} \notin P_{\ell(1,2)}$.
Thus use the Pigeonhole Principle recursively and obtain vertex $a_t$ not belongs to any of $P_j$, a contradiction.
Thus $G^\ast$ is $K_t$-free.

If $|P_1 \setminus (V_i \cup V_j)|> |V_j\cap P_{\ell(i,j)}|$, then  similarly, we can show that $H_1=G^\ast_{V_i\cap P_1   \rightarrow S}$ with $S=\bigcup_{\iota\neq \ell(i,j)}P_i\setminus V_i $ keeps $K_t$-free with more edges than $G^\ast$.
Hence, by Theorem~\ref{extremal number 1}, $e(G^\prime) \leq e(G^\ast)  <  e(H_1)\leq f(n_1,\ldots,n_r,1,t)$.
Thus we may assume that  $|P_1 \setminus (V_i \cup V_j)|  \leq  \min \{|V_i\cap P_{\ell(i,j)}|,|V_j\cap P_{\ell(i,j)}|\}$.
Without loss of generality, let $|     V_i\cap (P_{\ell(i,j)}\cup P_1) | \leq |V_j\cap (P_{\ell(i,j)}\cup P_1)|$.
Now, let $P^\ast_1= (P_1 \setminus V_j)  \cup (V_i\cap P_{\ell(i,j)})  $ and $P^\ast_{\ell(i,j)}=V_j\cap (P_{\ell(i,j)}\cup P_1)$.
The graph $H_2 $ obtained from $G^\ast$ by changing $P_1$ and $P_{\ell(i,j)}$ to $P^\ast_1$ and $P^\ast_{\ell(i,j)}$ and replacing all edges between $P_1$ and $P_{\ell(i,j)}$ to all edges between $P^\ast_1$ and $P^\ast_{\ell(i,j)}$ (keeping other edges incident with $P_1$ and $P_{\ell(i,j)}$).
Again, we can see that $H_2$ keeps $K_t$-free with more edges than $G^\ast$.
by Theorem~\ref{extremal number 1}, $e(G^\prime) \leq e(G^\ast)  <  e(H_1)\leq g(n_1,\ldots,n_r,1,t)$.
We finish the proof of Claim 2.
\end{proof}

First, we prove the $k=1$ case, i.e., we first prove Theorem~\ref{strong bollobas}.
Clearly, $(d)$ implies that  $Y$ is empty.
Moreover, we have $|E|=0$.
If there is an edge in $P_i$, then by Claim 2 we get a contradiction.
Thus $G[P_i]$ is an independent set, and hence the result follows by Theorem~\ref{extremal number 1}.

We prove Theorem~\ref{conjecture} by induction on $k$.
Suppose the theorem holds for $k-1$.
Suppose that there is a vertex $a \in Y$.
By $(d)$, $a$ is adjacent to a copy of $K_{t-1}^{kt}$, thus $G-a$ must be $(k-1)K_t$-free.
Let $a  \in  V_{\iota}$.
By induction hypothesis, we have $$e(G)\leq e(G-a)+n-n_\iota\leq g(n_1,\ldots,n_\iota-1,\ldots,n_r,k-1,t)+n-n_\iota\leq g(n_1,\ldots,n_r,k,t),$$
the inequality holds only if $d_G(a)=n-n_\iota$ and the last inequality holds by the construction of joining a vertex of $V_\iota$ to the possible vertices in extremal graph reaches $g(n_1,\ldots,n_\iota-1,\ldots,n_r,k-1,t)$.
Thus by induction hypothesis, the extremal graph is a complete $(t-1)$-partite graph with $k$ vertices adjacent to all other vertices.

Therefore, we may assume that $Y$ is empty.
Combining with Claims 1 and 2, $e(G)\leq g(n_1,\ldots,1,t)+k\epsilon n_{t-1}<g(n_1,\ldots,n_r,k,t)$, a contradiction, hence we finish the proof of Theorem~\ref{conjecture}.
\hfill$\square$ \medskip

\end{document}